%
%
%


\documentclass{article}




\usepackage{amsmath}
\usepackage{geometry}
\usepackage{cleveref}

\usepackage{amssymb}
\usepackage{amsthm}
\usepackage{mathrsfs}
\usepackage{amsfonts}
\usepackage{array}
\usepackage{stmaryrd}
\usepackage[english]{babel}
\usepackage{float}
\newcommand{\enstq}[2]{\left\{#1\mathrel{}\middle|\mathrel{}#2\right\}}
\newcommand{\norm}[1]{\left\|#1\right\|}
\newcommand{\N}{\mathbb{N}}

\newcommand{\R}{\mathbb{R}}

\newcommand{\abs}[1]{\left\lvert #1 \right\rvert}

\newcommand{\isdef}{\mathrel{\mathop:}=}

\usepackage{todonotes}

\let\div\undefined
\DeclareMathOperator{\div}{\textup{div}} 
\makeatletter
\newsavebox{\@brx}
\newcommand{\llangle}[1][]{\savebox{\@brx}{\(\m@th{#1\langle}\)}%
	\mathopen{\copy\@brx\kern-0.5\wd\@brx\usebox{\@brx}}}
\newcommand{\rrangle}[1][]{\savebox{\@brx}{\(\m@th{#1\rangle}\)}%
	\mathclose{\copy\@brx\kern-0.5\wd\@brx\usebox{\@brx}}}
\makeatother

\makeatletter    
\renewcommand*{\vec}[1]{\boldsymbol{#1}}
\makeatother

\newtheorem{theorem}{Theorem}[section]
\newtheorem{corollary}[theorem]{Corollary}
\newtheorem{lemma}[theorem]{Lemma}
\newtheorem{proposition}[theorem]{Proposition}
\theoremstyle{definition}
\newtheorem{definition}[theorem]{Definition}

\theoremstyle{remark}
\newtheorem{remark}[theorem]{Remark}

\numberwithin{equation}{section}

\begin{document}

\title
{Jump-preserving polynomial interpolation in non-manifold polyhedra}


\author{Martin Averseng\thanks{Laboratoire Angevin de Recherche Mathématique (LAREMA), 2 bd Lavoisier, 49000 Angers, martin.averseng@univ-angers.fr}}




\maketitle
\begin{abstract}
	We construct a piecewise-polynomial interpolant $u \mapsto \Pi u$ for functions $u:\Omega \setminus \Gamma \to \R$, where $\Omega \subset \R^d$ is a Lipschitz polyhedron and $\Gamma \subset \Omega$ is a possibly non-manifold $(d-1)$-dimensional hypersurface. This interpolant enjoys approximation properties in relevant Sobolev norms, as well as a set of additional algebraic properties, namely, $\Pi^2 = \Pi$, and $\Pi$ preserves homogeneous boundary values and jumps of its argument on $\Gamma$. As an application, we obtain a bounded discrete right-inverse of the ``jump" operator across $\Gamma$, and an error estimate for a Galerkin scheme to solve a second-order elliptic PDE in $\Omega$ with a prescribed jump across $\Gamma$. 
\end{abstract}
%

\section{Main results} 

For any open set $U \subset \R^d$, $d \geq 2$, we denote by $H^l(U)$ the completion of $C^\infty(U)$ with respect to the norm
\[\norm{u}^2_{H^l(U)} :=  \sum_{\abs{\alpha} \leq l}\norm{D^\alpha u}^2_{L^2(U)}\,.\]
When $U$ is a Lipschitz domain,\footnote{Precise definitions of all technical terms appearing in Sections \ref{sec:mainResult1} to \ref{sec:outline} are given in Section \ref{sec:basic}.} we define $H^s(U)$ for all real $s$ as in \cite{mclean2000strongly}. For a closed set $F \subset U$, let $H^1_{0,F}(U)$ be the closure of $C^\infty_c(U \setminus F)$ in $H^1(U)$. Let $\Omega \subset \R^d$ be a Lipschitz polyhedron and let $\Omega_h$ be a conforming simplicial mesh of $\Omega$. Let $\Gamma \subset \R^d$ be a $(d-1)$-dimensional hypersurface resolved by $\Omega_h$, in the sense that there is a conforming mesh $\Gamma_h$ of $\Gamma$ whose elements are faces of elements of $\Omega_h$. We assume that the elements of $\Omega_h$ intersecting $\Gamma$ do not intersect $\partial \Omega$. We denote by $V^p(\Omega_h;\Gamma)$ the finite-dimensional subspace of $H^1(\Omega \setminus \Gamma)$ consisting of functions which are polynomial of degree $p$ on each mesh element of $\Omega_h$. Note that we do not assume any regularity for the set $\Gamma$. It can be a non-manifold $(d-1)$-dimensional polyhedral surface, such as the one represented in Figure \ref{fig:junction} for $d=3$. We also make no assumption on the uniformity of the mesh $\Omega_h$.

\label{sec:mainResult1}
\begin{theorem}
	\label{thm:alg1}
	There exists a linear operator 
	$$\Pi_h : H^1(\Omega\setminus \Gamma) \to V^p(\Omega_h;\Gamma)$$ 
	satisfying the following properties.
	\begin{itemize}
		\item[(i)] $\Pi_h u_h = u_h$ for all $u_h \in  V^p(\Omega_h;\Gamma)$
		\item[(ii)] $\Pi_h u \in H^1_{0,\Gamma}(\Omega)$ for all $u \in H^1_{0,\Gamma}(\Omega)$,
		\item[(iii)] $\Pi_h u \in H^1(\Omega)$ for all $u \in H^1(\Omega)$.
	\end{itemize} 
\end{theorem}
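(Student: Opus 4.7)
The plan is to build $\Pi_h$ as a Scott--Zhang type quasi-interpolant adapted to the jump structure at $\Gamma$. The basis $\{\phi_\nu\}$ of $V^p(\Omega_h;\Gamma)$ has \emph{split} Lagrange nodes on $\Gamma$: each geometric node $\nu \in \Gamma$ appears once per connected component of a small punctured neighborhood of $\nu$ minus $\Gamma$ (one ``side'' per component), while nodes away from $\Gamma$ are standard. To each such $\phi_\nu$ I would attach a $(d-1)$-dimensional face $\sigma_\nu$ containing $\nu$: for $\nu \in \Gamma$, a face of $\Gamma_h$ incident to an element of $\Omega_h$ lying in the component designated by the side-label of $\nu$; for $\nu \in \partial\Omega$, a face of $\Omega_h$ contained in $\partial\Omega$; and otherwise, any mesh face containing $\nu$. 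On each $\sigma_\nu$ introduce $\psi_\nu \in L^2(\sigma_\nu)$ biorthogonal to the local Lagrange shape functions, i.e. $\int_{\sigma_\nu} \phi_\mu \psi_\nu = \delta_{\mu\nu}$ for every basis function $\phi_\mu$ whose support meets $\sigma_\nu$, and define
\[
  \lambda_\nu(u) := \int_{\sigma_\nu} (\gamma^{\mathrm{side}(\nu)} u)\, \psi_\nu, \qquad \Pi_h u := \sum_\nu \lambda_\nu(u)\, \phi_\nu,
\]
where $\gamma^{\mathrm{side}(\nu)}$ denotes the appropriate one-sided trace on $\sigma_\nu$ (the usual trace when $\sigma_\nu \not\subset \Gamma$). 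Standard trace theorems make each $\lambda_\nu$ continuous on $H^1(\Omega\setminus\Gamma)$.

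Property (i) is immediate: if $u_h = \sum_\mu c_\mu \phi_\mu$, then $\lambda_\nu(u_h) = c_\nu$ by biorthogonality, so $\Pi_h u_h = u_h$. For (ii), a function $u \in H^1_{0,\Gamma}(\Omega)$ has vanishing one-sided trace on every face of $\Gamma_h$; thus $\lambda_\nu(u) = 0$ for all $\nu \in \Gamma$, and since the remaining basis functions vanish on $\Gamma$, $\Pi_h u$ has zero trace on $\Gamma$ and hence lies in $H^1_{0,\Gamma}(\Omega)$. For (iii), $u \in H^1(\Omega)$ has matching one-sided traces across every face of $\Gamma_h$; consequently the DOFs attached to the different copies of a split node $\nu \in \Gamma$ all coincide, so $\Pi_h u$ is continuous across $\Gamma$, i.e. $\Pi_h u \in H^1(\Omega)$.

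The main obstacle is purely geometric: verifying, in the non-manifold setting of Figure~\ref{fig:junction}, that for every connected component of a punctured neighborhood of $\nu \in \Gamma$ minus $\Gamma$ there exists a face of $\Gamma_h$ incident to an element of $\Omega_h$ contained in that component. Without this, the designation of $\sigma_\nu$ cannot be carried out for every split node. This should follow from the conformity of $\Omega_h$ and $\Gamma_h$ through a combinatorial incidence argument on elements of $\Omega_h$ and faces of $\Gamma_h$ clustered around a junction point. The hypothesis that $\Gamma$-adjacent elements avoid $\partial\Omega$ further ensures that the boundary-preservation and jump-preservation rules for $\sigma_\nu$ never collide. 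Once this geometric bookkeeping is in place, the construction and the verification of (i)--(iii) are a routine adaptation of the standard Scott--Zhang analysis.
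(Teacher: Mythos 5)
The construction you propose is precisely the ``naive'' candidate that the paper discusses in its outline section and explicitly rules out: attach to each split node $\nu$ on one side of $\Gamma$ a face $\sigma_\nu$ on that side, and set $\lambda_\nu(u) = \int_{\sigma_\nu}(\gamma^{\mathrm{side}(\nu)}u)\psi_\nu$. Your verifications of (i) and (ii) are fine, and your geometric worry about the existence of a face of $\Gamma_h$ on each side is answered affirmatively by a face-connectedness argument (the paper's local face-connectedness of stars in a regular mesh). But your argument for (iii) contains a genuine gap, and the gap is not repairable within this scheme. You write that for $u\in H^1(\Omega)$ the ``matching one-sided traces across every face of $\Gamma_h$'' force the DOFs of the different split copies of a node to coincide. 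That inference is a non sequitur: matching traces is a statement about the two traces on the \emph{same} face. When the geometric node $\vec x_i$ has $q_i>2$ sides, no face of $\Gamma$ can touch more than two of them, so there must exist two split copies $\nu,\nu'$ of $\vec x_i$ with $\sigma_\nu\neq\sigma_{\nu'}$. Then $\lambda_\nu(u)$ and $\lambda_{\nu'}(u)$ are integrals of $u$ against different weights over different faces, and for a generic smooth $u\in H^1(\Omega)$ they disagree. Consequently $\Pi_h u$ has distinct values at the split copies of $\vec x_i$, i.e.\ $\Pi_h u\notin H^1(\Omega)$, violating (iii). This is exactly the obstruction the paper identifies right after its equation \eqref{eq:naive}.

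The paper's remedy is structurally different and is the actual content of the theorem. One decomposes $V^p(\Omega_h;\Gamma)=V^p(\Omega_h)\oplus\Psi^p(\Omega_h;\Gamma)$, where $V^p(\Omega_h)$ is spanned by the usual nodal functions $\phi_i=\sum_j\phi_{i,j}$ and $\Psi^p(\Omega_h;\Gamma)$ by the ``jump'' functions $\psi_{i,j}=\phi_{i,j}-\phi_{i,q_i}$. The coefficient on $\phi_i$ is computed by a \emph{symmetrized} form $N_i$ that averages one-sided trace integrals over one chosen face per side, so that $N_i$ treats all sides symmetrically. The coefficient on $\psi_{i,j}$ is a linear combination of \emph{bridge functionals} $N_{i,\{k,\ell\}}(u)=\int_{F_{k\ell}}\psi_{[F_{k\ell},\vec x_i]}(\gamma_{[K_k,F_{k\ell}]}-\gamma_{[K_\ell,F_{k\ell}]})u$, each of which integrates a \emph{jump} across a face between two sides; these vanish identically on $H^1(\Omega)$, which is what delivers (iii). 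Making the $N_{i,j}$ dual to the $\psi_{i,j}$ requires showing that the bridge functionals span the relevant dual space, and this again reduces to the combinatorial fact that the ``bridge graph'' on the sides around $\vec x_i$ is connected, a consequence of local face-connectedness. Your proposal is missing this entire dual-basis/bridge mechanism, which is the new idea that makes (iii) compatible with (i) in the non-manifold setting.
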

We call such an operator a {\em jump-aware interpolant on $V^p(\Omega_h;\Gamma)$}. We say that $\Omega_h$ is $\gamma$-shape regular if for every mesh element $K$ of $\Omega_h$, $\frac{h_K}{\rho_K} \leq \gamma$, where $h_K$ is the diameter of $K$ and $\rho_K$ is the radius of its inscribed sphere.

\begin{figure}[H]
	\centering
	\includegraphics[width=0.2\linewidth]{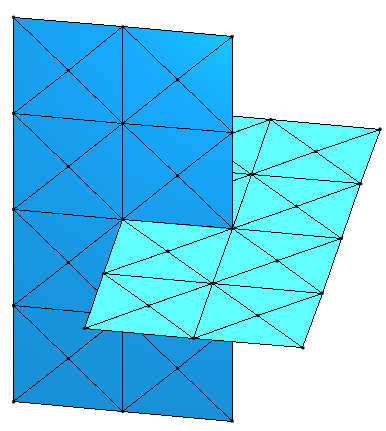}
	\caption{A non-manifold surface $\Gamma \subset \R^3$ and its conforming triangular mesh.}
	\label{fig:junction}
\end{figure}
\begin{theorem}
	\label{thm:cont1}
	Given an integer $p \geq 1$, and real numbers $\frac12 < t \leq p+1$ and $\gamma_0 > 0$, there exists a real number $C(\gamma_0,p,t) >0$ such if $\Omega_h$ is $\gamma_0$-shape regular mesh, there is a jump-aware interpolant $\Pi_h$ with the following continuity properties
		\begin{align}
			\norm{\Pi_h u}_{H^{t}(K)}^2 \leq C(\gamma_0,p,t) \sum_{K' \in \omega_K} \norm{u}_{H^t(K')}^2\,, \label{eq:brokenNorms1}\\
			{\norm{u-\Pi_h u}_{H^{s}(K)}^2}\leq C(\gamma_0,p,t) h_K^{2(t-s)}\sum_{K' \in \omega_K} \abs{u}_{H^t(K')}^2\,,
			\label{eq:brokenNorms2}
		\end{align}
	for any $s \in [0,t]$ and for all $u \in H^t_{\rm pw}(\omega_K) \cap H^1(\Omega \setminus \Gamma)$, where for every mesh element $K$ of $\Omega_h$, $\omega_K \subset \Omega_h$ is the set of mesh elements sharing at least one vertex with $K$. 
\end{theorem}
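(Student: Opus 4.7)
My plan is to build a specific jump-aware interpolant as a Scott--Zhang-type averaging operator. Let $\{\varphi_j\}$ be the nodal basis of $V^p(\Omega_h;\Gamma)$, indexed so that Lagrange nodes on $\Gamma$ carry one index per local ``side'', i.e., per connected component of a small ball around the node intersected with $\Omega\setminus\Gamma$. To each index $j$ I attach a support simplex $\sigma_j$ and a dual polynomial $\psi_j\in L^2(\sigma_j)$, and define $\ell_j(u)$ as the integral over $\sigma_j$ of $\psi_j$ against the trace of $u$ on $\sigma_j$ taken from the side corresponding to $j$. The key choices are: (a) if $x_j$ lies on $\Gamma$, pick $\sigma_j$ to be a $(d-1)$-face of $\Omega_h$ lying on $\Gamma$ and containing $x_j$; (b) if $x_j\in\partial\Omega\setminus\Gamma$, pick $\sigma_j\subset\partial\Omega$; (c) otherwise use the classical Scott--Zhang choice. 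The dual $\psi_j$ is built so that $\ell_j(\varphi_k)=\delta_{jk}$, which is possible because the restrictions of the $\{\varphi_k\}$ to $\sigma_j$ (taken from the same side as $j$) form a standard nodal basis on $\sigma_j$. With $\Pi_h u:=\sum_j\ell_j(u)\varphi_j$, properties (i)--(iii) of Theorem \ref{thm:alg1} follow by classical Scott--Zhang reasoning: (i) by biorthogonality; (ii) because $u\in H^1_{0,\Gamma}(\Omega)$ has vanishing trace on $\Gamma\cup\partial\Omega$; (iii) because for $u\in H^1(\Omega)$ the trace on $\sigma_j\subset\Gamma$ is single-valued, so the twin DOFs at a node on $\Gamma$ receive identical values and $\Pi_h u$ is continuous across $\Gamma$.

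\textbf{Stability.} For \eqref{eq:brokenNorms1}, I transplant each $K\in\Omega_h$ to a reference simplex $\hat K$ via an affine map; shape regularity ensures the Jacobians and their inverses are uniformly bounded. Because $t>1/2$, the trace embedding $H^t(K')\hookrightarrow L^2(\sigma_j)$ is continuous for each $K'$ adjacent to $\sigma_j$, so each $\ell_j$ is a bounded linear functional on $H^t(K')$ with norm depending only on $\gamma_0$ and $p$ after rescaling. The number of indices $j$ with $\sigma_j\subset\omega_K$ is bounded in terms of $\gamma_0$ and $p$, and the $H^t(K)$-norms of the basis functions scale in the standard way. Summing contributions and scaling back to $K$ yields \eqref{eq:brokenNorms1}.

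\textbf{Approximation and main obstacle.} For \eqref{eq:brokenNorms2}, I apply a broken Bramble--Hilbert argument on $\omega_K$: given $u\in H^t_{\rm pw}(\omega_K)\cap H^1(\Omega\setminus\Gamma)$, on each $K'\in\omega_K$ I choose a polynomial $v_{K'}$ of degree $p$ realising $\norm{u-v_{K'}}_{H^s(K')}\leq C h_{K'}^{t-s}\abs{u}_{H^t(K')}$. Because $V^p(\Omega_h;\Gamma)$ does not require cross-$\Gamma$ continuity, the collection $\{v_{K'}\}$ defines an element $v\in V^p(\Omega_h;\Gamma)|_{\omega_K}$ (taking on each twin side the polynomial from the corresponding element). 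Writing $u-\Pi_h u=(u-v)-\Pi_h(u-v)$ via property (i), applying \eqref{eq:brokenNorms1} to $\Pi_h(u-v)$, and using the triangle inequality together with the shape-regularity comparison $h_{K'}\leq C h_K$ for $K'\in\omega_K$ delivers \eqref{eq:brokenNorms2}. The hard step is the very first one: arranging the support-simplex assignment $j\mapsto\sigma_j$ so that it is simultaneously well-defined near non-manifold vertices of $\Gamma$, compatible with the boundary rule on $\partial\Omega$, and yields a dual basis biorthogonal to $\{\varphi_j\}$ on all of $V^p(\Omega_h;\Gamma)$. Once this combinatorial construction is in place, \eqref{eq:brokenNorms1}--\eqref{eq:brokenNorms2} follow from standard scaling and Bramble--Hilbert machinery.
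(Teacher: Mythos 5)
There is a genuine gap at exactly the point you flag as the ``hard step,'' and it is not a mere combinatorial detail: the single-support-simplex construction you propose cannot satisfy property (iii) of Theorem~\ref{thm:alg1} when $\Gamma$ is non-manifold, which is the whole point of the theorem. Your verification of (iii) reads ``because for $u\in H^1(\Omega)$ the trace on $\sigma_j\subset\Gamma$ is single-valued, so the twin DOFs at a node on $\Gamma$ receive identical values.'' That argument works only when all twin DOFs at a node $\vec x_i$ share the \emph{same} control face $\sigma_j$. When $\vec x_i$ has more than two sides (i.e.\ $q_i>2$, the non-manifold case), the sides cannot all share one face: there exist twins $j,j'$ with $\sigma_j\neq\sigma_{j'}$, and then a smooth $u\in H^1(\Omega)$ will generically give $\ell_j(u)\neq\ell_{j'}(u)$, so $\Pi_h u\notin H^1(\Omega)$. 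The paper spells this failure out explicitly in Section~\ref{sec:outline} around eq.~\eqref{eq:naive}. The fix requires a new ingredient that your proposal does not contain: the dual functional at a $\Gamma$-node is taken to be a \emph{common average} $N_i$ over all sides plus a correction from ``bridge functions'' $N_{i,\{k,\ell\}}$, which integrate \emph{differences} of the two one-sided traces on a face $F\subset\Gamma$ shared by adjacent sides, and therefore automatically vanish on $H^1(\Omega)$. Showing that these bridge functionals span the needed dual space hinges on the face-connectedness of the star of $\vec x_i$ (Proposition~\ref{prop:faceconnectedness}), which yields connectedness of the bridge graph; this is the genuinely new combinatorial lemma, and it cannot be bypassed by a cleverer choice of $\sigma_j$.

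There is also a secondary problem in your proof of~\eqref{eq:brokenNorms2}. You choose Bramble--Hilbert polynomials $v_{K'}$ \emph{independently} on each $K'\in\omega_K$ and assert that they glue to an element $v$ of $V^p(\Omega_h;\Gamma)|_{\omega_K}$ ``because $V^p(\Omega_h;\Gamma)$ does not require cross-$\Gamma$ continuity.'' But it \emph{does} require continuity across faces not in $\Gamma$, and independently chosen local BH polynomials will not match there; your $v$ is therefore not in $V^p(\Omega_h;\Gamma)$ and $\Pi_h v=v$ does not follow from property~(i). The paper circumvents this by choosing a bona fide $v_h\in V^p(\Omega_h;\Gamma)$ from nodal values of the $p_{K'}$'s with a prescribed preference for $K$, so that $(v_h)_{|K}=p_K$, and then bounding the residual functionals $\ell_j(u-v_h)$ that probe neighboring elements via Veeser's telescoping trick through faces not in $\Gamma$ (Lemma~\ref{lem:match}, plus the chain argument near eq.~(23) of \cite{veeser2016approximating}). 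Your high-level outline of scaling, trace, and Bramble--Hilbert ingredients is sensible and parallels the paper, but without the bridge-function construction and the telescoping estimate the argument does not close.
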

Here, $H^t_{\rm pw}(\omega_K)$ is the set of functions in $L^2(\Omega)$ whose restriction to any $K' \in \omega_K$ belongs to $H^t(K')$. When $0 \leq s < \frac12$, it is possible to obtain global bounds on the $H^s$ norm of the interpolation error:
\begin{corollary}[Estimate with global norms]
	Under the same assumptions as in \Cref{thm:cont1}, $\Pi_h$ satisfies 
	\[\norm{u - \Pi_h u}_{H^s(\Omega)}^2 \leq \frac{5}{2}C(\gamma_0,p,1) h^{2-2s} \abs{u}^2_{H^1(\Omega \setminus \Gamma)}\]
	for all $0 \leq s < \frac12$ and $u \in H^1(\Omega \setminus \Gamma)$, where $h = \max_K h_K$, and 
	\[\abs{u}^2 = \norm{\nabla u}^2_{L^2(\Omega \setminus \Gamma)}\]
	where $\nabla u$ is the weak gradient of $u$ in $\Omega \setminus \Gamma$ defined in Remark \ref{rem:H1}, Eq. \eqref{eq:weakGrad}.
\end{corollary}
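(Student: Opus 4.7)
The plan is to combine the elementwise estimate from Theorem~\ref{thm:cont1} with a local-to-global comparison of Sobolev norms that holds only when $s < 1/2$.

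First I would apply inequality (\ref{eq:brokenNorms2}) with $t = 1$ at each mesh element $K$, using $h_K \leq h$, to obtain
\[
\norm{u - \Pi_h u}^2_{H^s(K)} \leq C(\gamma_0, p, 1)\, h^{2(1-s)} \sum_{K' \in \omega_K} \abs{u}^2_{H^1(K')}.
\]
Summing over all $K$ and exchanging the order of summation (the relation $K' \in \omega_K$ is symmetric in $K$ and $K'$, with multiplicity $\max_K \abs{\omega_K}$ bounded only in terms of $\gamma_0$ and $d$) yields
\[
\sum_K \norm{u - \Pi_h u}^2_{H^s(K)} \leq C'\, h^{2(1-s)} \abs{u}^2_{H^1(\Omega \setminus \Gamma)}.
\]

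The crux is then to dominate the global $H^s(\Omega)$ norm by the broken sum on the left. For $0 \leq s < 1/2$, the extension-by-zero map $H^s(K) \to H^s(\R^d)$ is bounded on any Lipschitz mesh element, because no trace on codimension-one sets is defined at such regularity; in particular jumps of $v = u - \Pi_h u$ across $\Gamma$ do not spoil global $H^s$ membership. Writing $v = \sum_K v\,\mathbb{1}_K$ and inserting this decomposition into the Gagliardo--Slobodeckij integral defining $\norm{\cdot}_{H^s(\Omega)}$, the diagonal contributions are controlled term by term by $\norm{v}^2_{H^s(K)}$ while the off-diagonal (cross-element) contributions are non-singular for $s < 1/2$ and are absorbed into $\norm{v}^2_{L^2(\Omega)}$. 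Combining this with the previous display then gives the claimed estimate.

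The main obstacle is pinning down the explicit factor $\tfrac{5}{2}$: the steps above only show an inequality of the form $C \cdot C(\gamma_0, p, 1)\, h^{2(1-s)} \abs{u}^2_{H^1(\Omega \setminus \Gamma)}$ with some finite $C$ depending on $\gamma_0$, $d$ and (possibly) $s$. Recovering the sharp $s$-independent value $\tfrac{5}{2}$ requires a careful combinatorial-geometric accounting of $\abs{\omega_K}$ and an explicit control of the cross-element terms in the Gagliardo--Slobodeckij double integral; these bounds are purely technical but are where the effort concentrates.
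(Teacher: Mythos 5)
Your plan follows the same route as the paper: apply the elementwise estimate \eqref{eq:brokenNorms2} with $t=1$, sum over $K$ (the patch overlap is harmless, bounded in terms of $\gamma_0$ and $d$), and then bound the global $H^s(\Omega)$ norm by the broken one, exploiting that $s<\tfrac12$. The paper's one-line proof outsources this last gluing step to \cite[Lemma 4.1.49]{sauter2011bem}, \cite[Lemma 3.33]{mclean2000strongly}, and the coincidence $H^s_0(\Omega)=H^s(\Omega)=\widetilde H^s(\Omega)$ for $0\le s<\tfrac12$, from which the explicit $\tfrac52$ is read off. Where your inline version of that step goes wrong is in the treatment of the cross-element Gagliardo terms.

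You claim the off-diagonal contributions $\int_K\int_{K'}\abs{v(x)-v(y)}^2\abs{x-y}^{-d-2s}\,dx\,dy$ with $K\ne K'$ ``are non-singular for $s<1/2$ and are absorbed into $\norm{v}^2_{L^2(\Omega)}$.'' That is not correct. For adjacent $K,K'$ and $x\in K$ the inner integral $\int_{K'}\abs{x-y}^{-d-2s}\,dy$ behaves like $\textup{dist}(x,K')^{-2s}$, so the cross term is of the size of $\int_K\abs{v(x)}^2\,\textup{dist}(x,\partial K)^{-2s}\,dx$, which is \emph{not} controlled by $\norm{v}^2_{L^2(K)}$: take $d=1$, $K=(0,1)$, $K'=(1,2)$, $v(x)=(1-x)^{-a}$ on $K$ with $\tfrac12-s<a<\tfrac12$, which is in $L^2(K)$ but for which the weighted integral diverges. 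What does control this term is the local $\widetilde H^s(K)$ norm (equivalently $H^s(K)$, since $s<\tfrac12$), via the fractional Hardy inequality that holds without boundary conditions precisely in the range $s<\tfrac12$. This is the actual content of the references the paper invokes — they are not merely bookkeeping for the constant $\tfrac52$, they are what makes the local-to-global passage valid at all for $0<s<\tfrac12$. Once you bound the off-diagonal terms by $\sum_K\norm{v}^2_{H^s(K)}$ instead of by $\norm{v}^2_{L^2}$, the rest of your argument goes through with some explicit constant in place of $\tfrac52$.
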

\begin{proof}
	Use \cite[Lemma 4.1.49]{sauter2011bem} and \cite[Lemma 3.33]{mclean2000strongly} and the fact that $H^s_0(\Omega) = H^s(\Omega) =  \widetilde{H}^s(\Omega)$ for these values of $s$. 
\end{proof}

\begin{remark}[The space $H^1(\Omega \setminus \Gamma)$]
	\label{rem:H1}
	Contrarily to other common  notations, the definition taken here, implies that $$H^1(\Omega \setminus \Gamma) \neq H^1(\Omega)$$ 
	in general. For instance, taking $\Gamma = \partial \Omega_0$ where $\overline{\Omega_0} \subset \Omega$ are both bounded Lipschitz polyhedra, the indicator function $\mathbf{1}_{\Omega_0}$ belongs to $H^1(\Omega \setminus \partial \Omega_0)$ but not $H^1(\Omega)$. In this regard, our notation differs e.g. from \cite{veeser2016approximating} (compare \Cref{lem:match} below with Proposition 1 of this reference, keeping in mind that $\Omega$ is a not assumed to be a regular polyhedron therein). By the ``$H = W$" theorem of Meyers and Serrin \cite{meyersHW1964}, $H^1(\Omega \setminus \Gamma)$ as defined here is equal to the set of functions in $L^2(\Omega \setminus \Gamma)$ which possess a weak gradient $p \in L^2(\Omega \setminus \Gamma)^3$, that is
	\begin{equation}
		\label{eq:weakGrad}
		\exists p \in L^2(\Omega \setminus \Gamma)^3 \,:\, \forall \varphi \in \mathcal{D}(\Omega \setminus \Gamma)^3\,,\quad \int_{\Omega \setminus \Gamma} u(x) \div \varphi(x)\,dx = -\int_{\Omega \setminus \Gamma} p(x) \cdot \varphi(x)\,dx\,.
	\end{equation}
	Note that in general, the weak gradient of $u$ is {\em not} equal to the gradient of $u$ in the sense of distributions on $\Omega$. Another common, different definition for $H^1(U)$ is 
	\[H_{\rm restrict}^1(U) := \enstq{f_{|U}}{f \in H^1(\R^d)}\,.\]
	If $U$ is an extension domain, then $H^1_{\rm restrict}(U) = H^1(U)$ with equivalent norms, see \cite[Thm 3.30, (ii)]{mclean2000strongly}, but here, $\Omega \setminus \Gamma$ is not an extension domain in general. Finally, care must be taken when defining a fractional Sobolev space $H^s(\Omega \setminus \Gamma)$. If one uses, for instance, a definition via the Gagliardo semi-norm as in \cite{di2012hitchhiker}, then, perhaps surprisingly, the inclusion $H^1(\Omega \setminus \Gamma) \subset H^s(\Omega \setminus \Gamma)$ for $\frac{1}{2} < s < 1$ {\em does not} hold in general; this is because the Gagliardo semi-norm does not ``see" the set of zero $d$-dimensional measure $\Gamma$, so that $H^s(\Omega \setminus \Gamma) = H^s(\Omega) \nsupseteq H^1(\Omega \setminus \Gamma)$. For the purposes of this work, we will not consider $H^s(U)$ if $U$ is not a Lipschitz domain.
\end{remark}

\subsection{Relation to other results in the literature}

If $\Omega_0$ is a Lipschitz polyhedron, then one can choose a polyhedron $\Omega_0 \Subset \Omega$ and apply \Cref{thm:alg1,thm:cont1} with $\Gamma = \partial \Omega_0$ (i.e. our result applies also if $\Gamma$ is in fact a manifold boundary). It can be seen from the proof below that $\Pi_h u = 0$ on $\Omega \setminus \overline{\Omega_0}$ if $u = 0$ on this set. This way,   
our results extend\footnote{Note, however, that we have restricted our analysis to the $L^2$ setting for simplicity, while these references cover the general $L^q$ setting.} several results of the literature, including \cite[Theorems 2.1, 4.1 and Corollary 4.1]{scott1990finite} (the original construction by Scott and Zhang), by giving the full range of continuity properties in fractional Sobolev norms as in \cite{ciarlet2013analysis}, and giving the estimate in terms of broken norms of $u$ as in \cite{camacho2015pointwise}, (instead of norms over patches). The latter feature, first discovered by Veeser in \cite{veeser2016approximating}, has recently come to attention in several works, see e.g. \cite{gawlik2021local,caetano2022hausdorff,chaumont2021equivalence} and references therein.

\subsection{Background and motivation}

The motivation for this work is the discretization of partial differential equations (PDEs) in complex geometries, in particular, the complement in $\R^d$ of a thin ``screen" or ``crack" $\Gamma$ represented by a $(d-1)$-dimensional surface. These types of geometries occur in a broad range of applications, including antennas \cite{glisson1980simple}, microwaves \cite{chen1970transmission}, satellites \cite{alad2013capacitance}, elasticity \cite{sladek1993nonsingular}, geosciences \cite{lenti2003bem,fan2020high}, water waves \cite{zhao2020iterative} or computer graphics \cite{sharp2020laplacian}. 

This work is especially connected to the development, for such geometries, of boundary element methods (BEM) \cite{mclean2000strongly,sauter2011bem}, also widely known as Method of Moments (MoM) in computational electromagnetics \cite{gibson2021method}. The BEM is usually well-suited for ``obstacle problems", that is, PDEs set in $\mathbb{R}^{d}\setminus \overline{\Omega}$, mainly because  it allows to recast a $d$-dimensional, unbounded problem, into an integral equation on the boundary $\partial \Omega$ of the obstacle. 

In real-life applications with thin obstacles (e.g. metallic plates, screens, fractures), one is led to apply the BEM in the degenerate case where the obstacle becomes an open surface in $\R^3$. This is by now well-understood if the surface is an orientable manifold with boundary, see e.g. \cite{stephan1987boundary,wendland1990hypersingular,buffa2003electric}. The manifoldness makes it possible to regard the fundamental quantities of the BEM -- namely, the ``jumps" $[u]_\Gamma$ of functions $u$ defined in $\R^3\setminus\Gamma$ -- as genuine functions on $\Gamma$. One puts $[u]_\Gamma(x):= \lim_{\varepsilon \to 0^+}u(x + \varepsilon n(x))-u(x-\varepsilon n(x))$, where $n(x)$ is the normal vector at $x$ specified by the orientation. 

When $\Gamma$ is not an orientable manifold, $\Omega$ may be on more than $2$ sides of $\Gamma$ at a given point, so jumps cannot be interpreted as single-valued functions on $\Gamma$. There is nevertheless a more abstract point of view, proposed by Claeys and Hiptmair in \cite{claeys2013integral,claeys2016integral}, which stems from the observation that, when $\Gamma$ is a manifold, the jump $[u]_\Gamma$ of a function $u \in H^1(\Omega \setminus \Gamma)$ vanishes if, and only if, $u \in H^1(\Omega)$. Hence, $[u]_\Gamma$ can be equivalently defined as the equivalence class of $u$ for the relation ``differing by a $H^1(\Omega)$ function", thus extending the definition of $[u]_\Gamma$ to the case where $\Gamma$ is no longer a manifold. It is proved in \cite{claeys2013integral,claeys2016integral} that the PDE can be recast into a uniquely solvable variational problem set in the quotient space associated to this equivalence relation. This has paved the way for the rigorous analysis of non-manifold BEM methods in recent works \cite{claeys2021quotient,averseng2022fractured,cools2022preconditioners,averseng2022ddm}. 

The latest of these works \cite{cools2022preconditioners,averseng2022ddm} have highlighted the need, as a key ingredient for the stability analysis, for a  boundary-element analog to the Scott and Zhang interpolant \cite{scott1990finite}. The latter is a fundamental tool in the analysis of finite-element methods (see e.g. \cite[Thm 1.1]{ainsworth1997posteriori}) and many works are concerned with its study and generalizations, see e.g. \cite{ciarlet2013analysis,ern2017finite,faustmann2019stability} and references therein. For the aforementioned BEM applications, the needed generalization of the Scott-Zhang operator is the one given by \Cref{cor:multitrace} below, see \cite[Proposition 6.9]{averseng2022ddm} and \cite[Theorem 1]{cools2022preconditioners}.

The remainder of this paper is organized as follows. In the next section, we prove a consequence of \Cref{thm:alg1,thm:cont1} regarding polynomial interpolation on non-manifold polyhedral hypersurfaces. We then give two other applications of \Cref{thm:alg1,thm:cont1} in \Cref{sec:application}. An outline of the construction of $\Pi_h$ is given in \Cref{sec:outline}. We then collect definitions and notations in \Cref{sec:basic} and prove \Cref{thm:alg1,thm:cont1} in \Cref{sec:construction}.

\section{Polynomial interpolation on polyhedral hypersurfaces}

\label{sec:corMulti}
From \Cref{thm:alg1,thm:cont1}, one can derive a result concerning piecewise polynomial interpolation of functions defined on the non-manifold hypersurface $\Gamma$. We allow the interpolated functions to be multi-valued on $\Gamma$, as is typically the case when considering restrictions to $\Gamma$ of elements of $H^1(\Omega \setminus \Gamma)$. 

To set a rigorous framework for this problem, we follow \cite{claeys2013integral} by defining the {\em multi-trace space} $\mathbb{H}^{1/2}(\Gamma) := H^1(\Omega \setminus\Gamma) / H^1_{0,\Gamma}(\Omega)$ (equipped with the quotient norm) and let $\gamma_\Gamma:H^1(\Omega \setminus \Gamma) \to \mathbb{H}^{1/2}(\Gamma)$ be the canonical surjection corresponding to this quotient. The polyhedral setup that we have adopted here also allows us to view $\mathbb{H}^{1/2}(\Gamma)$ as a subspace of $L^2(\Gamma) \times L^2(\Gamma)$ in the following way:
\begin{lemma}
	\label{lem:Tr}
 	For each $F \in \Gamma_h$, let $K^+(F)$ and $K^{-}(F)$ be the two elements of $\Omega_h$ that are incident to $F$, (where the $+/-$ labels are chosen arbitrarily). Given $u \in H^1(\Omega \setminus \Gamma)$, define the function $\textup{Tr}^{\pm}(u)$ on $\Gamma$ as 
 	\[(\textup{Tr}^{\pm}u)(x) := \sum_{F \in \mathcal{M}_{\Gamma}} \mathbf{1}_{\{x \in F\}}(\gamma^\pm_{| F} u_{|K^\pm(F)})(x)\] 
 	where $\gamma^\pm_{ F} : H^1(K^\pm(F)) \to L^2(F)$ is the trace operator, and $L^2(F)$ is identified to a subspace of $L^2(\Gamma)$ in the obvious way. Let 
 	\[\textup{Tr}: H^1(\Omega \setminus \Gamma) \to L^2(\Gamma) \times L^2(\Gamma)\]
 	be defined by $\textup{Tr}(u) := (\textup{Tr}^+(u),\textup{Tr}^-(u))$. Then $\textup{Tr}$ is linear and continuous, does not depend on the choice of mesh  $\Omega_h$, and satisfies $\textup{Ker}(\textup{Tr}) = H^1_{0,\Gamma}(\Omega)$.  
\end{lemma}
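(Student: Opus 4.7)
The proof naturally splits into three parts: linearity and continuity; independence of the mesh $\Omega_h$; and the kernel characterization. Linearity is immediate from the definition. For continuity, the plan is to invoke the classical trace theorem on each pair $(K^\pm(F),F)$, giving $\norm{\gamma^\pm_{F}(u_{|K^\pm(F)})}_{L^2(F)}^2 \leq C\norm{u}_{H^1(K^\pm(F))}^2$ with $C$ depending only on the shape of $K^\pm(F)$. Summing over $F \in \Gamma_h$, and using that each element of $\Omega_h$ carries at most $d+1$ faces, yields $\norm{\textup{Tr}^\pm u}_{L^2(\Gamma)}^2 \leq C'\norm{u}_{H^1(\Omega\setminus\Gamma)}^2$.

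For mesh independence, the key observation is that at almost every $x \in \Gamma$, the point $x$ lies in the relative interior of a single face $F \in \Gamma_h$ and admits a small ball $B$ that meets $\Gamma$ only along the hyperplane supporting $F$; hence $B \setminus \Gamma$ consists of two open half-balls which are mesh-independent geometric objects. Given two meshes $\Omega_h,\Omega_h'$ each resolving $\Gamma$, each determines, for a given side of $\Gamma$, a mesh element containing one such half-ball, and by the standard compatibility of traces under restriction to a subdomain these two elements yield the same trace value at $x$. Thus $\textup{Tr}$ is well-defined modulo the arbitrary per-face swap of the $+/-$ labels.

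The inclusion $H^1_{0,\Gamma}(\Omega) \subset \textup{Ker}(\textup{Tr})$ follows from continuity of $\textup{Tr}$ together with the fact that any $u_n \in C^\infty_c(\Omega \setminus \Gamma)$ vanishes in a neighborhood of $\Gamma$, and hence has $\textup{Tr}(u_n) = 0$. For the converse, take $u$ with $\textup{Tr}(u) = 0$. The vanishing of both one-sided traces on every $F \in \Gamma_h$ in particular forces them to match, so the standard matching-trace characterization of $H^1$ across interior interfaces implies $u \in H^1(\Omega)$, with its single-valued trace on $\Gamma$ equal to zero.

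The main obstacle is then the final step: showing that any such $u$ can be approximated in $H^1(\Omega)$ by elements of $C^\infty_c(\Omega \setminus \Gamma)$. The non-manifold character of $\Gamma$ prevents a direct appeal to the classical zero-trace theorem. The plan is to cover a neighborhood of $\Gamma$ and argue via a partition of unity: (i) near generic points of $\Gamma$ where it is locally a single Lipschitz hypersurface, the usual truncation-and-mollification construction produces $C^\infty_c$ approximants vanishing near $\Gamma$; (ii) near the \emph{junction} singular set $S \subset \Gamma$ where $\Gamma$ fails to be locally manifold, one first cuts $u$ off by a function vanishing on a small neighborhood of $S$. Since $S$ has Hausdorff dimension at most $d-2$ in $\R^d$, it has zero $H^1$-capacity, so such a cutoff contributes an arbitrarily small $H^1$ error; the truncated function is supported where $\Gamma$ is manifold and is then handled by (i).
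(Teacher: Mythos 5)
Your proof is a genuinely different, self-contained route from the paper's, which is citation-based: mesh independence is deduced from the invariance of $\textup{Tr}$ under subdivision together with the existence of a common simplicial refinement of any two conforming meshes \cite[Corollary~1.6]{hudson1969piecewise}, continuity from the trace theorem, and the kernel identification from a general result of Swanson \cite[Theorem~2.2]{swanson1999sobolev} applied to $\Omega\setminus\Gamma$. Your geometric half-ball argument and your matching-trace-plus-density argument replace both citations by direct constructions, which is informative, but two steps need repair.

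First, the capacity claim in step (ii) rests on a false criterion: Hausdorff dimension at most $d-2$ does \emph{not} by itself imply vanishing $W^{1,2}$-capacity. The critical Hausdorff dimension for the $(1,2)$-capacity in $\R^d$ is exactly $d-2$, and a closed set of that dimension can have positive capacity. What you actually need, and what does hold in the polyhedral setting, is that the singular set $S$ is a $(d-2)$-dimensional polyhedral complex with \emph{finite} $\mathcal{H}^{d-2}$-measure; that stronger hypothesis does imply $\mathrm{Cap}_{1,2}(S)=0$. Second, step (i) is presented as routine but it is not the classical zero-trace theorem: $\Gamma$ lies in the interior of $\Omega$ and is approached from two sides, so the truncation must be performed on both sides simultaneously, justified by a two-sided Hardy-type inequality for $u$ near $\Gamma$, and the pieces of the partition of unity must be assembled without reintroducing nonzero traces on the overlaps. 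Neither issue is fatal, but both require nontrivial work that the paper avoids by citation. (A minor point on mesh independence: the half-ball you exhibit need not be contained in a single element of a given mesh; the correct statement is that the one-sided trace near a regular $x\in\Gamma$ is a local quantity compatible with restriction to sub-elements, which is precisely what the common-refinement argument encodes cleanly.)
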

\begin{proof}
	The independence with respect to the choice of the mesh $\Omega_h$ follows from the fact that $\textup{Tr}$ is invariant under mesh subdivision of $\Omega_h$ and the fact that two meshes of the same polyhedron admit a common subdivision, see \cite[Corollary 1.6]{hudson1969piecewise}. The continuity of $\textup{Tr}$ follows from the trace Theorem, see \cite[Thm 3.37]{mclean2000strongly}. 
	The equality $H^1_{0,\Gamma}(\R^d) = \textup{Ker}(\textup{Tr})$ can be shown as a particular case of the (much more general) result \cite[Theorem 2.2]{swanson1999sobolev} applied to the open set $\Omega := \R^d \setminus \Gamma$. 
\end{proof}
Let $\iota: \mathbb{H}^{1/2}(\Gamma) \to L^2(\Gamma) \times L^2(\Gamma)$ be defined by $\iota (u) \isdef \textup{Tr}(f)$ for any $f \in H^1(\Omega \setminus \Gamma)$ such that $\gamma_\Gamma (f) = u$. By \Cref{lem:Tr}, this is a well-defined injective linear map. We can thus identify $\mathbb{H}^{1/2}(\Gamma)$ to $\iota(\mathbb{H}^{1/2}(\Gamma))$. Under this identification, one has $\gamma_\Gamma = \textup{Tr}$. 

Let $H^{1/2}(\Gamma) := \gamma_\Gamma(H^1(\Omega))$. This space is called the {\em single-trace space}, and is the set of elements of $\mathbb{H}^{1/2}(\Gamma)$ of the form $(u,u)$. We can thus identify it to a subspace of $L^2(\Gamma)$, which coincides (with equivalent norms) with the usual $H^{1/2}(\Gamma)$ when $\Gamma$ is the boundary of a Lipschitz domain (because the trace operator $\gamma_{\partial \Omega}: H^1(\Omega) \to H^{1/2}(\partial \Omega)$ has a continuous right inverse in this case, see \cite[Thm. 3.37]{mclean2000strongly}).  Finally, let $\widetilde{H}^{1/2}(\Gamma) := \mathbb{H}^{1/2}(\Gamma)/H^{1/2}(\Gamma)$ (the {\em jump space}) and let $[\cdot]_\Gamma: \mathbb{H}^{1/2}(\Gamma)\to \widetilde{H}^{1/2}(\Gamma)$ be the corresponding canonical surjection. Define the space of {\em piecewise-polynomial multi-traces} by 
\[\mathbb{V}^p(\Gamma_h) := \textup{Tr}(V^p(\Omega_h;\Gamma))\,.\]
At least if $d \in \{2,3\}$, $p = 1$, and when $d = 3$, if all the vertices of $\Gamma_h$ have ``edge-connected stars" (but this is probably a more general fact), this space only depends on $\Gamma_h$ and not on the tetrahedral mesh $\Omega_h$, because it is equal to the space of ``continuous piecewise linear functions on the inflated mesh", see \cite[Theorem 5.5]{averseng2022fractured}. Following this reference, we define 
\[\Gamma_h^*:= \enstq{(K,F) \in \Omega_h \times \Gamma_h}{K \textup{ is incident to } F}\,,\]
and for $F^* = (K,F) \in \Gamma_h^*$ and $u = (u^+,u^-) \in L^2(\Gamma) \times L^2(\Gamma)$, we write 
\[\norm{u}_{H^s(F^*)} := \norm{u^\pm}_{H^s(F)}\]
if $K = K^\pm(F)$. 
\begin{theorem}[Multi-trace interpolant]
	\label{cor:multitrace}
	For all $\gamma_0 > 0$ and integer $p \geq 1$, there exist a constant $C'(\gamma_0,p) > 0$ such that for all real $s \in [0,\frac12]$, the following holds. If $\Omega_h$ is $\gamma_0$-shape-regular, there exists a linear operator $\Phi_h : \mathbb{H}^{1/2}(\Gamma) \to \mathbb{V}^p(\Gamma_h)$ satisfying 
	\begin{itemize}
		\item[(i)]  $\Phi_h u_h = u_h$ for all $u_h \in   \mathbb{V}^p(\Gamma_h)$. 
		\item[(ii)] $[u]_\Gamma = 0 \Longrightarrow [\Phi_h u]_\Gamma = 0$ for all $u \in \mathbb{H}^{1/2}(\Gamma)$.  
		\item[(iii)] For all $u \in \mathbb{H}^{1/2}(\Gamma)$, 
		\[\norm{\Phi_h u}_{\mathbb{H}^{1/2}(\Gamma)} \leq C'(\gamma_0,p) \norm{u}_{\mathbb{H}^{1/2}(\Gamma)}\,,\]
		\item[(iv)] For all $u \in \mathbb{H}^{1/2}(\Gamma)$
		\[\sum_{F^* \in \Gamma_h^*} h_{F^*}^{2s - 1} \norm{u - \Phi_h u}_{H^s(F^*)}^2 \leq C'(\gamma_0,p) \norm{u}_{\mathbb{H}^{1/2}(\Gamma)}^2\]
	\end{itemize}
	where $h_{F^*} = \max (h_{K^+(F)},h_{K^-(F)})$ with $F^* = (K,F)$.  
\end{theorem}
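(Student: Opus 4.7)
The plan is to define $\Phi_h$ by lifting to $\Omega$: for each $u \in \mathbb{H}^{1/2}(\Gamma)$, choose any $f \in H^1(\Omega \setminus \Gamma)$ with $\gamma_\Gamma(f) = u$ and set $\Phi_h u := \textup{Tr}(\Pi_h f)$, where $\Pi_h$ is the interpolant of \Cref{thm:alg1,thm:cont1} used with $t = 1$. Well-definedness is the first thing to check: two representatives $f_1, f_2$ differ by an element of $H^1_{0,\Gamma}(\Omega)$, and by property (ii) of \Cref{thm:alg1} so does $\Pi_h f_1 - \Pi_h f_2$; the identity $\textup{Ker}(\textup{Tr}) = H^1_{0,\Gamma}(\Omega)$ from \Cref{lem:Tr} then gives $\textup{Tr}(\Pi_h f_1) = \textup{Tr}(\Pi_h f_2)$.

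Properties (i)--(iii) then follow from their counterparts for $\Pi_h$. For (i), if $u_h = \textup{Tr}(v_h)$ with $v_h \in V^p(\Omega_h;\Gamma)$, take this $v_h$ as representative and apply property (i) of \Cref{thm:alg1}. For (ii), when $[u]_\Gamma = 0$ one may choose a representative $g \in H^1(\Omega)$; property (iii) of \Cref{thm:alg1} yields $\Pi_h g \in H^1(\Omega)$, whose two-sided traces across $\Gamma$ coincide, hence $[\Phi_h u]_\Gamma = 0$. For (iii), the definition of the quotient norm gives $\norm{\Phi_h u}_{\mathbb{H}^{1/2}(\Gamma)} \leq \norm{\Pi_h f}_{H^1(\Omega \setminus \Gamma)}$ for any representative $f$; summing \eqref{eq:brokenNorms1} with $t = 1$ over all mesh elements, using shape regularity to bound the overlap of the patches $\omega_K$, controls this by $C \norm{f}_{H^1(\Omega \setminus \Gamma)}$, and taking infimum over $f$ finishes (iii).

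The main technical step is (iv). For each $F^* = (K, F) \in \Gamma_h^*$, a scaling to a reference simplex $\hat K$ with reference face $\hat F$, combined with the continuous embedding $H^1(\hat K) \hookrightarrow H^s(\hat F)$ valid for $0 \leq s \leq \tfrac12$, yields the scaled trace inequality
\[
h_K^{2s - 1} \norm{v}_{H^s(F)}^2 \leq C\bigl(h_K^{-2} \norm{v}_{L^2(K)}^2 + \abs{v}_{H^1(K)}^2\bigr) \quad \text{for all } v \in H^1(K).
\]
Applied to $v = (f - \Pi_h f)_{|K}$, the trace on $F$ equals the restriction of $\textup{Tr}(u) - \Phi_h u$ to $F^*$ (for the $\pm$ side corresponding to $K$). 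I would then invoke \eqref{eq:brokenNorms2} twice, with $(t,s) = (1,0)$ and $(t,s) = (1,1)$, to bound both right-hand side terms by $C \sum_{K' \in \omega_K} \abs{f}_{H^1(K')}^2$. Summation over $F^* \in \Gamma_h^*$ --- each element of $\Omega_h$ appearing a bounded number of times thanks to shape regularity --- followed by an infimum over $f$ yields the bound by $\norm{u}^2_{\mathbb{H}^{1/2}(\Gamma)}$. The main obstacle is simply the bookkeeping around shape regularity: identifying $h_{F^*}$ with $h_K$ on both sides of $F$ and tracking the overlap constants arising both from the patches $\omega_K$ in \eqref{eq:brokenNorms2} and from the two-sided sum over $F^*$.
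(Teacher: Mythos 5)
Your proof is correct and follows essentially the same route as the paper: lift to $H^1(\Omega\setminus\Gamma)$, apply $\Pi_h$, push forward with $\textup{Tr}$, and control (iv) by the scaled trace inequality combined with the local continuity/approximation bounds \eqref{eq:brokenNorms1}--\eqref{eq:brokenNorms2} at $t=1$. The only cosmetic difference is that the paper fixes the minimum-norm lift $\mathcal{L}u$ once and for all (which makes well-definedness and the passage to the quotient norm automatic), whereas you take an arbitrary representative, check well-definedness via $\textup{Ker}(\textup{Tr}) = H^1_{0,\Gamma}(\Omega)$ and property (ii) of \Cref{thm:alg1}, and then take the infimum over representatives at the end — these are equivalent.
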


\begin{remark}
	The operator $\Phi_h$ is analogous the Scott-Zhang interpolant \cite{scott1990finite}, with the multi-trace space $\mathbb{H}^{1/2}(\Gamma)$ playing the role of the volume Sobolev spaces, and with the jump operator $[\cdot]_\Gamma$ playing the role of the trace operator. 
\end{remark}
\begin{proof}[Proof of \Cref{cor:multitrace}]
 	Let $\Pi_h:H^{1}(\Omega \setminus \Gamma) \to V^p(\Omega_h;\Gamma)$ be given by Theorem \ref{thm:cont1}. Let $\mathcal{L}: \mathbb{H}^{1/2}(\Gamma) \to H^{1}(\Omega\setminus \Gamma)$ be the linear isometry satisfying
	\[\norm{\mathcal{L} u}_{H^{1}(\Omega \setminus\Gamma)} = \norm{u}_{\mathbb{H}^{1/2}(\Gamma)} \quad \textup{and} \quad \gamma_\Gamma \mathcal{L} u = \gamma_\Gamma u\]
	for all $u \in \mathbb{H}^{1/2}(\Gamma)$. 
	We define $\Phi_h \isdef \gamma_\Gamma(\Pi_h \circ \mathcal{L})$, and show that properties (i)-(iv) hold. The property (ii) is an immediate consequence of the property (iii) of  \Cref{thm:alg1}. Given $u_h \in \mathbb{V}^p(\Gamma_h)$, let $U_h \in V^p(\Omega_h;\Gamma)$ be such that $u_h = \gamma_\Gamma(U_h)$. On the other hand, let $U = \mathcal{L} u_h$. Noting that $U - U_h \in H^1_{0,\Gamma}(\Omega)$, we deduce that $\Pi_h (U - U_h) \in H^1_{0,\Gamma}(\Omega)$, hence $$\Phi_h u_h - u_h = \gamma_\Gamma(\Pi_h(U - U_h)) = 0\,.$$ 
	This proves (i). Next, notice that the first inequality of \Cref{thm:cont1} with $t = 1$ implies that  
	\[\|\Pi_h u\|_{H^1(\Omega \setminus \Gamma)} \leq C(\gamma_0,p,1) \|u\|_{H^1(\Omega \setminus \Gamma)}\,.\]
	Thus $\|\Phi_h\|_{\mathbb{H}^{1/2}\to \mathbb{H}^{1/2}} \leq  \|\gamma_\Gamma\|_{H^1(\Omega \setminus \Gamma) \to \mathbb{H}^{1/2}}C(\gamma_0,p,1) \|\mathcal{L}\|_{  \mathbb{H}^{1/2}\to H^1(\Omega \setminus \Gamma)} = C(\gamma_0,p,1)$,
	which proves (iii). 
	Finally, given $s \in (0,\frac12]$, we have, for all $F \in \mathcal{M}^*_{\Gamma}$ by the scaled trace theorem (see \Cref{lem:scaledTrace} below)
	\[\begin{array}{rcl}
		\abs{\gamma_{|F}v}^2_{H^s(F)} &\leq& C_{\rm tr} \left(h_K^{-1-2s}\norm{v}_{L^2(K)}^2 + h_K^{1 - 2s}\abs{v}_{H^{1}(K)}^2\right),\\
		\norm{\gamma_{|F}v}^2_{L^2(F)} &\leq& C_{\rm tr}\left(h_K^{-1} \norm{v}_{L^2(K)}^2 + h_K \abs{v}^2_{H^1(K)}\right)\,,
	\end{array}\]
	for all $v \in H^1(K)$, for some constant $C_{\rm tr}$ depending only on $\gamma_0$. Applying these inequalities to $v = \mathcal{L}u - \Pi_h \mathcal{L}u$, we obtain
	\[\begin{split}
		&\sum_{F^* \in \Gamma_h^*} h_{F^*}^{2s - 1} \abs{u - \Phi_h u}_{H^{s}(F^*)}^2\\
		&\qquad\leq C_{\rm tr} \sum_{K \in \Omega_h} h_K^{-2}\norm{\mathcal{L}u - \Pi_h u}^2_{L^2(K)} + \abs{\mathcal{L}u- \Pi_h \mathcal{L}u}_{H^{1}(K)}^2\\
		&\qquad \leq C_{\rm tr}C(\gamma_0,p,1) \sum_{K \in \Omega_h} \abs{\mathcal{L}u}^2_{H^1(K)}\\
		& \qquad\leq C_{\rm tr}C(\gamma_0,p,1) \norm{\mathcal{L}u}^2_{H^1(\Omega \setminus \Gamma)}\\
		&\qquad=C_{\rm tr}  C(\gamma_0,p,1)\norm{u}^2_{\mathbb{H}^{1/2}(\Gamma)}
	\end{split}\] 
	and,
	\[\begin{split}
	&\sum_{F \in \Gamma_h^*} h_{F^*}^{2s - 1} \norm{u - \Phi_h u}_{L^2(F)}^2\\ &\qquad\leq  C_{\rm tr}\sum_{K \in \Omega_h} h_K^{2s-2}\norm{\mathcal{L}u - \Pi_h u}^2_{L^2(K)} + h_K^{2s} \abs{\mathcal{L}u- \Pi_h \mathcal{L}u}_{H^{1}}^2\\
	&\qquad\leq C_{\rm tr}C(\gamma_0,p,1)  \max_{K \in \Omega_h} h_K^{2s}\sum_{K \in \Omega_h} \abs{\mathcal{L}u}^2_{H^1(K)}\\
	&\qquad\leq C_{\rm tr}C(\gamma_0,p,1) \textup{diam}(\Omega)^{2s} \norm{\mathcal{L}u}^2_{H^1(\Omega \setminus \Gamma)}\\
	&\qquad = C_{\rm tr}C(\gamma_0,p,1)\textup{diam}(\Omega)^{2s} \norm{u}^2_{\mathbb{H}^{1/2}(\Gamma)}
\end{split}\]
	where $\textup{diam}(\Omega)$ is the diameter of $\Omega$. Summing these two estimates gives (iv).
\end{proof}

\section{Applications}

\label{sec:application}

We present two applications of \Cref{thm:alg1,thm:cont1}, which parallel the ones given in \cite[Section 5]{scott1990finite} by treating jumps instead of traces. 

\subsection{Bounded discrete right inverse for the jump operator}

With the notation of Section \ref{sec:corMulti}, define the discrete jump space
\[\widetilde{V}^p(\Gamma_h):= [{V}^p(\Omega_h,\Gamma)]_\Gamma = \enstq{[u_h]_\Gamma}{u_h \in V^p_h(\Omega_h;\Gamma)}\,;\]
in other words, $\widetilde{V}^p(\Gamma_h)$ is the set of equivalence classes in $H^1(\Omega \setminus \Gamma)$ of the elements of $V^p(\Omega_h;\Gamma)$ for the relation ``differing by a function in $H^1(\Omega)$".
\begin{corollary}
	\label{discHarmLift}
 	With the same assumptions as in \Cref{thm:cont1}, there exists a linear operator 
	$E_h : \widetilde{H}^{1/2}(\Gamma) \to V^p(\Omega_h;\Gamma)$
	such that 
	\begin{itemize}
		\item[(i)] $[E_h \widetilde{\varphi}_h]_\Gamma = \widetilde{\varphi}_h$ for all $\widetilde{\varphi}_h \in \widetilde{V}^p(\Gamma_h)$, 
		\item[(ii)] for all $\widetilde{\varphi} \in \widetilde{H}^{1/2}(\Gamma)$, 
		\[\norm{E_h \widetilde{\varphi}}_{H^1(\Omega \setminus \Gamma)} \leq C(\gamma_0,p,1)\norm{\widetilde{\varphi}}_{\widetilde{H}^{1/2}(\Gamma)}\,.\]
	\end{itemize}
\end{corollary}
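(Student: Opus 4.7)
The plan is to set $E_h \isdef \Pi_h \circ \widetilde{\mathcal{L}}$, where $\Pi_h$ is the jump-aware interpolant supplied by \Cref{thm:cont1} and $\widetilde{\mathcal{L}} : \widetilde{H}^{1/2}(\Gamma) \to H^1(\Omega \setminus \Gamma)$ is an isometric right-inverse of the composed surjection $u \mapsto [\gamma_\Gamma u]_\Gamma$. Such a lift exists because $H^1_{0,\Gamma}(\Omega) \subset H^1(\Omega)$, so that $\widetilde{H}^{1/2}(\Gamma) = \mathbb{H}^{1/2}(\Gamma)/H^{1/2}(\Gamma)$ is naturally isometrically isomorphic to $H^1(\Omega \setminus \Gamma)/H^1(\Omega)$, and one may take $\widetilde{\mathcal{L}} \widetilde{\varphi}$ to be the element of minimal $H^1(\Omega\setminus\Gamma)$-norm in the preimage of $\widetilde{\varphi}$, i.e. the orthogonal complement of $H^1(\Omega)$ in $H^1(\Omega \setminus \Gamma)$.

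Property (ii) then follows by summing the continuity estimate \eqref{eq:brokenNorms1} of \Cref{thm:cont1} with $s = t = 1$ over all mesh elements $K$ of $\Omega_h$. Reindexing the resulting double sum produces a patch-overlap factor that is bounded in terms of $\gamma_0$ alone and can be absorbed into $C(\gamma_0, p, 1)$; together with the isometry of $\widetilde{\mathcal{L}}$ this gives $\norm{E_h \widetilde{\varphi}}_{H^1(\Omega \setminus \Gamma)} \leq C(\gamma_0, p, 1) \norm{\widetilde{\varphi}}_{\widetilde{H}^{1/2}(\Gamma)}$.

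For property (i), given $\widetilde{\varphi}_h \in \widetilde{V}^p(\Gamma_h)$ I pick some $u_h \in V^p(\Omega_h; \Gamma)$ with $[u_h]_\Gamma = \widetilde{\varphi}_h$ and set $u \isdef \widetilde{\mathcal{L}} \widetilde{\varphi}_h$. Since $[u]_\Gamma = \widetilde{\varphi}_h = [u_h]_\Gamma$ in $\widetilde{H}^{1/2}(\Gamma)$, we have $\gamma_\Gamma(u - u_h) \in H^{1/2}(\Gamma) = \gamma_\Gamma(H^1(\Omega))$; choosing $w \in H^1(\Omega)$ with the same trace, the difference $u - u_h - w$ lies in $\textup{Ker}(\gamma_\Gamma) = H^1_{0,\Gamma}(\Omega) \subset H^1(\Omega)$, so $u - u_h \in H^1(\Omega)$. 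By \Cref{thm:alg1} (iii) this yields $\Pi_h(u - u_h) \in H^1(\Omega)$, hence $[\Pi_h(u - u_h)]_\Gamma = 0$, while \Cref{thm:alg1} (i) gives $\Pi_h u_h = u_h$. Therefore
\[
E_h \widetilde{\varphi}_h \,=\, \Pi_h u \,=\, u_h + \Pi_h(u - u_h),
\]
and taking the jump of both sides yields $[E_h \widetilde{\varphi}_h]_\Gamma = [u_h]_\Gamma = \widetilde{\varphi}_h$, as required.

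The argument is essentially structural: all analytic content sits in \Cref{thm:alg1,thm:cont1}, and the only non-computational step is the Hilbert-space construction of the isometric lift $\widetilde{\mathcal{L}}$. The one point that requires attention is verifying that the three algebraic properties of $\Pi_h$ interact correctly to give $[E_h \widetilde{\varphi}_h]_\Gamma = \widetilde{\varphi}_h$ on discrete jumps, which the decomposition $\Pi_h u = u_h + \Pi_h(u - u_h)$ reduces cleanly to parts (i) and (iii) of \Cref{thm:alg1}.
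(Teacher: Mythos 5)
Your proof is correct and takes essentially the same route as the paper's: both define $E_h = \Pi_h \circ \mathcal{E}$ for an isometric lift $\mathcal{E}$ of the jump surjection, deduce (ii) directly from the $H^1$-stability of $\Pi_h$, and establish (i) by noting that $\mathcal{E}\widetilde{\varphi}_h - u_h \in H^1(\Omega)$ and invoking properties (i) and (iii) of \Cref{thm:alg1}. The only (minor) difference is that you spell out explicitly why $[\mathcal{E}\widetilde{\varphi}_h - u_h]_\Gamma = 0$ forces $\mathcal{E}\widetilde{\varphi}_h - u_h \in H^1(\Omega)$ (via $\gamma_\Gamma$ and the inclusion $H^1_{0,\Gamma}(\Omega)\subset H^1(\Omega)$), a step the paper states without comment.
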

\begin{proof}
	Let $\mathcal{E}: \widetilde{H}^{1/2}(\Gamma) \to {H}^{1}(\Omega \setminus \Gamma)$ be the linear isometry 
	such that
	\[[\mathcal{E} \widetilde{\varphi}]_\Gamma = \widetilde{\varphi} \quad \textup{and}\quad\norm{\mathcal{E} \widetilde{\varphi}}_{{H}^{1}(\Omega \setminus \Gamma)} = \norm{\widetilde{\varphi}}_{\widetilde{H}^{1/2}(\Gamma)} \quad \forall \widetilde{\varphi} \in \widetilde{H}^{1/2}(\Gamma)\,.\]
	We put $E_h:= \Pi_h \circ \mathcal{E}$. Given $\widetilde{\varphi}_h \in \widetilde{V}^p(\Gamma_h)$, let $w_h \in {V}^p(\Omega_h;\Gamma)$ be such that $[w_h]_\Gamma = \widetilde{\varphi}_h$. Then, one has $[\mathcal{E} \widetilde{\varphi}_h - w_h]_\Gamma = 0$, which exactly means that $\mathcal{E}\widetilde{\varphi}_h - w_h \in H^1(\Omega)$. Therefore, the same is true of $\Pi_h(\mathcal{E}\widetilde{\varphi}_h - w_h )$, hence
	$$0 = [\Pi_h (\mathcal{E} \widetilde{\varphi}_h - w_h)]_\Gamma = [\Pi_h (\mathcal{E}\,\widetilde{\varphi}_h) - \Pi_h w_h]_\Gamma = [E_h \widetilde{\varphi}_h]_\Gamma - [w_h]_\Gamma = [E_h \widetilde{\varphi}_h]_\Gamma - \widetilde{\varphi}_h\,.$$
	This proves (i). The property (ii) is immediate.  
\end{proof}

\subsection{Boundary value problems with prescribed jumps}
In \cite[Section 5]{scott1990finite}, Scott and Zhang show that their interpolant provides a ``systematical way for averaging [a prescribed] boundary data" in a boundary value problem. Here we translate this idea to solve an elliptic boundary value problem with a prescribed jump
\begin{equation}
	\label{eq:boundaryJumpProblem}
	\begin{gathered}
		-\sum_{i,j = 1}^3 \frac{\partial}{\partial x_j} \left(\alpha_{ij} \frac{\partial u}{\partial x_i}\right) = 0 \quad \textup{in} \quad \Omega \setminus \Gamma\,, \\
		[u]_\Gamma = [g]_\Gamma\,,\\[0.5em]
		u = 0 \quad \textup{ on } \partial \Omega\,,
	\end{gathered}
\end{equation}
for some given $g \in H^1(\Omega \setminus \Gamma)$ satisfying $g = 0$ on $\partial \Omega$. As in \cite[Section 5]{scott1990finite}, we assume that $\alpha$ is bounded and symmetric positive definite a.e. on $\Omega$. The week solution $u$ of \eqref{eq:boundaryJumpProblem} is defined as the unique element of $H^1(\Omega \setminus \Gamma)$ satisfying $u - g \in H^1_0(\Omega)$ and $a(u-g,v) = 0$ for all $v \in H^1_0(\Omega)$, where 
\[a(u,v) := \int_{\Omega} \sum_{i,j = 1}^3 \alpha_{i,j} \frac{\partial u}{\partial x_i} \frac{\partial v}{\partial x_j}\,dx \,.\]
Define $V^p_{0}(\Omega_h) := V^p(\Omega_h;\Gamma) \cap H^1_0(\Omega)$, and let
\begin{equation}
	V^{p,g}(\Omega_h;\Gamma) := \enstq{v_h \in V^p(\Omega_h;\Gamma)}{v_h - \Pi_h g \in V_{0}^p(\Omega_h)} 
\end{equation}
We can then define an approximation $u_h\in V^{p,g}(\Omega_h;\Gamma)$ by
\[a(u_h,v_h) = 0  \quad \forall v_h \in V^p_0(\Omega_h)\] 
and using the same method of proof as in \cite{scott1990finite}, we obtain
\[\norm{u - u_h}_{H^1(\Omega \setminus \Gamma)} \lesssim \max \enstq{\norm{\alpha_{i,j}}_{L^\infty(\Omega)}}{1 \leq i,j \leq 3} h^{l-1} \abs{u}_{H^l(\Omega \setminus \Gamma)}\,, \,\, 1 \leq l \leq p+1\,,\]
where $h = \max_{K \in \Omega_h} h_K$.

\section{Outline of the construction of $\Pi_h$}
\label{sec:outline}

Replacing $\Gamma$ by $\partial \Omega$ and ignoring property (iii) of Theorem \ref{thm:alg1}, an operator meeting the remaining requirements is given by the Scott and Zhang interpolant $\mathscr{Z}_h$ \cite{scott1990finite}. This operator acts on a function $u$ as 
\begin{equation}
	\label{defSZ}
	\mathscr{Z}_h u \isdef \sum_{i = 1}^N \left(\int_{\sigma_i} \psi_i(x) u(x)\,dx \right) \phi_i =: \sum_{i = 1}^N N_i(u) \phi_i\,.
\end{equation}
Here, $\{\phi_i\}_{i=1}^N$ is the Lagrange nodal basis of $V^p(\Omega_h)$, the space of continuous piecewise polynomial functions of degree $p$ on $\Omega_h$. For Lagrange nodes $\vec x_i$ lying in the interior of some mesh element $K_i \in \Omega_h$, we take $\sigma_i  := K_i$. For $\vec x_i \in \partial K$, $\sigma_i$ is instead a $(d-1)$-simplex, freely chosen among the faces of $\Omega_h$ incident to $\vec x_i$, with the sole restriction that $\sigma_i \subset \partial \Omega$ if $\vec x_i \in \partial \Omega$. Finally, $\psi_i$ is a ``dual Lagrange polynomial" with the property that 
\[\int_{\sigma_i} \psi_i(x) P(x)\,dx = P(\vec x_i) \]
for any polynomial $P$ of degree at most $p$.
It is immediate that $\mathscr{Z}_h u_h = u_h$ for all $u_h \in V^p(\Omega_h)$, and the restriction on $\sigma_i$ also ensure $(\Pi_h u)_{|\partial \Omega} = 0$ when $u_{|\partial \Omega} = 0$. 

One may try to obtain a jump-aware interpolant by adapting the definition in \eqref{defSZ}. The first important change is that for $\vec x_i \in \Gamma$, there generally needs to be more than just one basis function associated to $\vec x_i$, to account for the fact that elements of $V^p(\Omega_h;\Gamma)$ can have several distinct limits at $\vec x_i$ depending on the considered ``side" of $\Gamma$, see Figure \ref{fig:sides}

\begin{figure}
	\centering
	\includegraphics[width=0.5\textwidth]{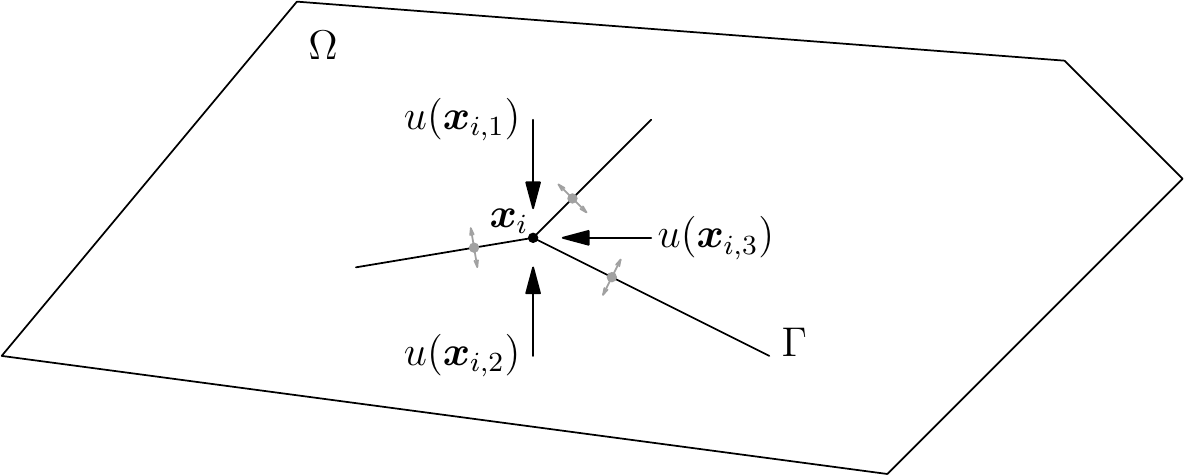}
	\caption{A two-dimensional situation where $\Gamma$ has $q_i = 3$ ``sides" at $\vec x_i$. Functions in $V^p(\Omega_h;\Gamma)$ may admit three distinct limits at $\vec x_i$. The bridges $\{1,2\}$, $\{2,3\}$ and $\{1,3\}$ around $\vec x_i$ are signaled by gray arrows.}
	\label{fig:sides}
\end{figure}
 Labeling the sides of $\Gamma$ around $\vec x_i$ from $1$ to $q_i$, one can construct a nodal basis $\{\phi_{i,j}\}_{1 \leq i \leq N, 1 \leq j \leq q_i}$   with the following properties. Any element $u \in V^p(\Omega_h;\Gamma)$ can be written as
\[u = \sum_{i,j} u(\vec x_{i,j}) \phi_{i,j}\]
where the coefficient $u(\vec x_{i,j})$ is equal to the limit of $u(x)$ as $x$ approaches $\vec x_i$ from side $j$. Furthermore, 
\[\phi_i = \sum_{j = 1}^{q_i} \phi_{i,j} \quad \textup{ and } \quad \sum_{j = 1}^{q_i}\lambda_j \phi_{i,j} \in H^1(\Omega) \iff \lambda_j = \lambda_{j'} \quad  \forall 1 \leq j,j' \leq q_i.\] 
With these definitions, we set
\[\Pi_h u \isdef \sum_{1 \leq i \leq N} \sum_{1 \leq j \leq q_i} N_{i,j}(u)\phi_{i,j}\,,\]
where the $N_{i,j}$ are suitably chosen linear form. For $\vec x_i$ away from $\Gamma$, $q_i = 1$ and $N_{i,1}(u)$ will be defined exactly as $N_i$ in eq.~\eqref{defSZ}; all the difficulty resides in the case where $\vec x_i \in \Gamma$ and $q_i > 1$. In this case, one can attempt to define 
\begin{equation}
	\label{eq:naive}
	N_{i,j}(u) \overset{?}{\isdef} \int_{\sigma_{i,j}} \psi_{i,j}(x) u_{i,j}(x)\,dx \,.
\end{equation}
Here $\sigma_{i,j} \subset \Gamma$ is any face $F \in \mathcal{F}(\Omega_h)$ which contains $\vec x_i$ and lies in the boundary of the $j$-th side of $\Gamma$ around $\vec x_i$. Like above, $\psi_{i,j}$ is the dual Lagrange polynomial associated to $\vec x_i$ on $\sigma_{i,j}$, and $u_{i,j}$ is the restriction of (the polynomial function) $u_{|K_{i,j}}$ to $\sigma_{i,j}$.

The formula \eqref{eq:naive} comes close to satisfying the requirements. It guarantees the conditions $\Pi_h u_h = u_h$ and $(\Pi_h u)_{|\Gamma} = 0$ when $u_{|\Gamma} = 0$. To investigate the condition about jumps, suppose that $u \in H^1(\Omega)$. To fulfill requirement (iii) of \Cref{thm:cont1}, we need to ensure that $N_{i,j}(u) = N_{i,j'}(u)$ for all $1 \leq i \leq N$, $1 \leq j,j' \leq q_i$. If $q_i = 2$, i.e., if there are just two sides of $\Gamma$ around $\vec x_i$ (as for example when $\Gamma$ is a manifold with boundary), this can be arranged by choosing $\sigma_{i,1} = \sigma_{i,2}$ in eq.~\eqref{eq:naive}. However, when there are more than two sides, there is always a pair $j,j'$ such that $\sigma_{i,j} \neq \sigma_{i,j'}$. As soon as this is the case, one can easily find a smooth function $u$ such that that $N_{i,j}(u) \neq N_{i,j'}(u)$, showing that eq.~\eqref{eq:naive} cannot meet the requirements.

A remedy is to define 
\begin{equation}
	\label{modifBridge}
	N_{i,j}(u) = N_i(u) + \sum_{i,k,\ell} \mu_{i,k,\ell}N_{i,k,\ell}
\end{equation} where $N_i$ is as in \eqref{defSZ}, and the $N_{i,k,\ell}$, that we call {\em bridge functions}, are of the form
\begin{equation}
	\label{eq:defNij}
	N_{i,k,\ell}(u) = \int_{\sigma_{i,k,\ell}} \psi_{i,k,\ell}(x) (u_{i,k}(x) - u_{i,\ell}(x))\,dx\,,
\end{equation}
where $\sigma_{i,k,\ell} \subset \Gamma$ is a {\em bridge} that is, a mesh face $F \subset \Gamma$ incident to $\vec x_i$, and located at the interface between the sides $k$ and $\ell$ of $\Gamma$ around $\vec x_i$ (see \Cref{fig:sides}). The key property is that $N_{i,k,\ell}(u) = 0$ when $u \in H^1(\Omega)$, so that bridge functions do not contribute to the value of $N_{i,j}(u)$ in this case. This ensures that $\Pi_h u \in H^1(\Omega)$ whenever $u \in H^1(\Omega)$.  

It remains to make sure that $\Pi_h u_h = u_h$ for all $u_h \in V^p(\Omega_h;\Gamma)$, and it turns out that one can always find a set of coefficients $\mu_{i,k,\ell}$ such that this property holds; as we shall see, this is related to the fact that the graph with nodes the sides $1,\ldots,q_i$ and with edges the bridges $\{k,\ell\}$, is connected.

\section{Definitions and notation}

\label{sec:basic}
We now gather definitions and notations in preparation for the proof of \Cref{thm:alg1} and \Cref{thm:cont1}.

\subsubsection*{Simplices} An {\em $n$-simplex} $S \subset \R^d$, $n \leq d$, is the closed convex hull of $n+1$ affinely independent points in $\R^d$ called its vertices. A {\em $k$-subsimplex} $\sigma$ of $S$ is a $k$-simplex, $k\leq n$, whose vertices are also vertices of $S$. If $k = (n-1)$, $\sigma$ is called a {\em face} of $S$. We denote by $\mathcal{F}(S)$ the set of faces of $S$.  We say that $\sigma$ {\em is incident to} $S$ if it is a subsimplex of $\sigma$, and that two $n$-simplices are {\em adjacent} if they share a face. 

\subsubsection*{Simplicial meshes}
An {\em  $n$-dimensional simplicial mesh} $\mathcal{M}$ is a finite set of $n$-simplices, called {\em (mesh) elements}, such that if $K,K' \in \mathcal{M}$, then $K \cap K'$ is either empty, or equal to a subsimplex of both $K$ and $K'$. The set of {\em faces} of $\mathcal{M}$ is $\mathcal{F}(\mathcal{M}):=\bigcup_{K \in \mathcal{M}}\mathcal{F}(K)$. We say that $\mathcal{M}$ is {\em face-connected} if, for any elements $K,K' \in \mathcal{M}$, there exists a sequence $K_1,\ldots,K_N$ of elements of $\mathcal{M}$ such that $K_1 = K$, $K_N = K'$ and for all $i \in \{1,\ldots,N-1\}$, the elements $K_i$ and $K_{i+1}$ are adjacent. The {\em boundary} $\partial \mathcal{M}$ is the $(n-1)$-dimensional mesh whose elements are the faces of $\mathcal{M}$ incident to only one element of $\mathcal{M}$. 

\subsubsection*{Conforming meshes} For a $n$-dimensional simplicial mesh $\mathcal{M}$ in $\R^d$, we denote by $\abs{\mathcal{M}}$ the closed subset of $\R^d$ defined by 
\[\abs{\mathcal{M}} := \bigcup_{K \in \mathcal{M}} K\,.\]
Given a (closed) set $M \subset \R^d$, we say that $\mathcal{M}$ is a {\em conforming mesh} of $M$ if $\abs{\mathcal{M}} = M$.

\subsubsection*{Regular meshes}
An $n$-dimensional simplicial mesh $\mathcal{M}$ is {\em regular} if it is a conforming mesh of some $n$-dimensional submanifold of $\R^d$ (possibly with boundary). In other words, $\mathcal{M}$ is regular if, for every point of $\abs{\mathcal{M}}$, there is an open set $U_x \subset \R^d$ containing $x$ such that $U_x$ is homeomorphic to either $\R^n$ or $\R^{n-1} \times \R_+$. Recall that for a positive real number $\gamma > 0$, we say that $\mathcal{M}$ is $\gamma$-{\em shape-regular} if
\[\max_{K \in \mathcal{M}}\frac{h_K}{\rho_K} \leq \gamma \,,\] 
where $h_K$ is the diameter of $K$ and $\rho_K$ is the radius of the largest $n$-dimensional closed ball contained in $K$.

We will use the following well-known facts about meshes. We haven't found a complete proof of the first one it in the literature, so we give one for completeness. The second one is proven in \cite[Lemma 11.1.3]{boissonnat1998algorithmic} for connected ``$d$-triangulations", and by \cite[Corollary 1.16]{hudson1969piecewise} and the remark following it, since $\mathcal{M}$ is a regular mesh, $\textup{st}(v)$ (or more precisely, the underlying simplicial complex) is indeed a $d$-triangulation. 

\begin{proposition}[Solid angles in shape-regular meshes]
	\label{prop:solidangles}
	Let $K \subset \R^d$ be a $d$-simplex and $v$ a vertex of $K$. The normalized solid angle $\Omega_v(K)$ of the simplex $K$ from $v$ satisfies
	\[\Omega_v(K) \geq \frac{1}{d} \frac{\rho_{K_i}^d}{{h^d_{K_i}}}\frac{\omega_{d-1}}{\omega_d}\,,\]
	where $\omega_n$ is the volume of the Euclidean unit ball in $\R^n$. 
	Therefore, if $\mathcal{M}$ is a $\gamma$-shape regular $d$-dimensional mesh in $\R^d$, the number $N$ of mesh elements sharing a given vertex is bounded by
	\[N \leq d \gamma^d\frac{\omega_d}{\omega_{d-1}}\,.\]
\end{proposition}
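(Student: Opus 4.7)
The plan is to sandwich $|K|$ between a cone-based lower bound and a polar-integration upper bound that explicitly involves $\Omega_v(K)$; the count on $N$ then follows from the additivity of normalized solid angles at $v$ combined with shape-regularity.

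For the upper bound on $|K|$, I would pass to polar coordinates centred at $v$. Let $\sigma \subset S^{d-1}$ be the set of directions $u$ for which the ray $v + \R_+ u$ meets $\textup{int}(K)$; by definition of the normalized solid angle, the $(d-1)$-surface measure of $\sigma$ equals $d \omega_d\, \Omega_v(K)$. Convexity of $K$ together with $v \in K$ implies that for each $u \in \sigma$ the intersection of the ray with $K$ is a segment of length $L(u) \leq h_K$. Fubini in polar coordinates then gives
\[|K| = \int_{\sigma} \int_0^{L(u)} t^{d-1}\, dt\, du = \frac{1}{d}\int_{\sigma} L(u)^d\, du \leq \frac{h_K^d}{d} \cdot d\, \omega_d\, \Omega_v(K) = \omega_d\, h_K^d\, \Omega_v(K)\,.\]

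For the matching lower bound on $|K|$, let $c$ denote the incenter of $K$ and let $D$ be the closed $(d-1)$-disk of radius $\rho_K$ obtained by intersecting the inscribed ball $B(c, \rho_K)$ with the hyperplane through $c$ orthogonal to $v - c$. Since $D \subset K$ and $v \in K$, convexity implies that the straight cone $\textup{conv}(\{v\} \cup D)$ is contained in $K$; its volume is $\tfrac{1}{d}\, \omega_{d-1}\, \rho_K^{d-1}\, \abs{v - c}$. The separation $\abs{v - c} \geq \rho_K$ must hold because $v$ is a vertex of $K$ and therefore cannot lie in the open ball $B(c, \rho_K)$ (otherwise a full $\R^d$-neighbourhood of $v$ would lie in $K$, contradicting that $v$ is a corner). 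Thus $|K| \geq \tfrac{1}{d}\, \omega_{d-1}\, \rho_K^d$, and combining the two inequalities yields the desired bound on $\Omega_v(K)$.

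For the count, suppose $K_1, \ldots, K_N \in \mathcal{M}$ all contain $v$. Their interiors are pairwise disjoint by the mesh condition, so the associated direction sets $\sigma(K_i) \subset S^{d-1}$ also have pairwise disjoint interiors, and consequently $\sum_{i=1}^N \Omega_v(K_i) \leq 1$. Applying the lower bound on each summand together with $h_{K_i}/\rho_{K_i} \leq \gamma$ delivers $N \leq d\, \gamma^d\, \omega_d/\omega_{d-1}$. The only subtle point, and arguably the main obstacle, is justifying the disjointness of the $\sigma(K_i)$ on the sphere: this is obtained by observing that a sufficiently small ball around $v$ meets each $K_i$ in a truncated cone over $\sigma(K_i)$, so overlapping of the direction sets would force the simplices themselves to overlap in every neighbourhood of $v$, contradicting the mesh condition.
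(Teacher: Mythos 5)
Your proof is correct and follows essentially the same route as the paper: both arguments rest on the cone with apex $v$ and base the $(d-1)$-disk of radius $\rho_K$ centred at the incenter $c$, orthogonal to $v-c$ (the paper's phrase ``circumenter'' is evidently a typo for incenter, as you rightly inferred). The one cosmetic difference is that you run the polar-coordinate volume estimate directly on $K$, giving $\abs{K} \leq \omega_d\, h_K^d\, \Omega_v(K)$, and then bound $\abs{K}$ from below by the cone volume $\tfrac{1}{d}\omega_{d-1}\rho_K^{d-1}\abs{v-c} \geq \tfrac{1}{d}\omega_{d-1}\rho_K^d$; the paper instead passes through the intermediate inequality $\Omega_v(K) \geq \Omega_v(C)$ and leaves the estimate for the cone as ``readily proved.'' Your version is a modest streamlining and makes explicit the two small points the paper leaves implicit: that $\abs{v-c} \geq \rho_K$ because a vertex cannot lie in the open inscribed ball, and that the direction sets $\sigma(K_i) \subset S^{d-1}$ have disjoint interiors, which is what justifies the additivity $\sum_i \Omega_v(K_i) \leq 1$.
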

For the definition of the normalized solid angle, refer to \cite{ribando2006measuring}. 
\begin{proof}
	To show the first inequality, we consider the solid $d$-dimensional cone $C$ with apex $v$ and base $S$, where $S$ is a $(d-1)$-dimensional ball centered at the the circumenter $c$ of $K$, with radius $\rho_{K}$, and lying in the plane orthogonal to the line $(vc)$. This is illustrated in Figure \ref{fig:solidangle}. The solid angle spanned by $K$ from $v$ is bounded from below by the solid angle spanned by $C$, for which we can readily prove the claimed estimate. 
	If $K_1,\ldots K_n$ are mesh elements incident to $v$, the normalized solid angle of $\textup{int}(K_1) \cup \ldots \cup \textup{int}(K_n)$ (where $\textup{int}(E)$ stands for the interior of the set $E$) from $v$ is at most $1$, by definition of the normalized solid angle, and equal to $\sum_{i=1}^n \Omega_v(K_i)$ due to the convexity of the simplices $K_i$ and the fact that their interior are pairwise disjoint. This immediately leads to the second inequality.
	\begin{figure}
		\centering
		\includegraphics[width=0.25\textwidth]{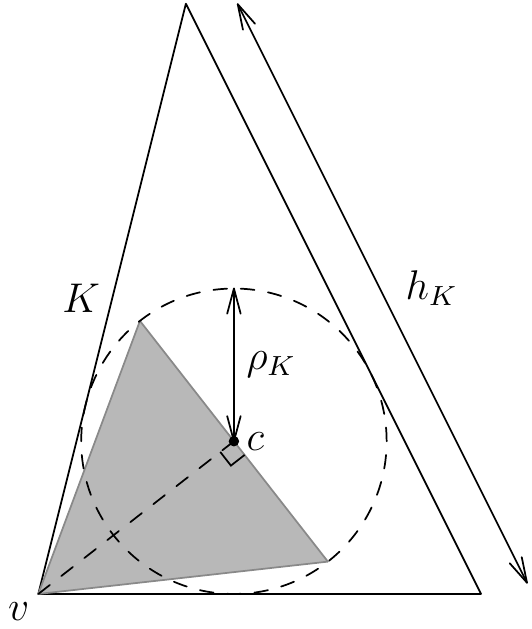}
		\caption{Figure for the proof of \Cref{prop:solidangles}. The cone $C$ is shaded in gray.}
		\label{fig:solidangle}
	\end{figure}
\end{proof}

\begin{proposition}[local face-connectedness in regular meshes]
	\label{prop:faceconnectedness}
	Let $\mathcal{M}$ be a regular $n$-dimensional simplicial mesh with $n \geq 1$, let $v$ be a vertex of some mesh element $K$ and let $\textup{st}(v)$ be the set of mesh elements incident to $v$. Then $\textup{st}(v)$ is face-connected. 
\end{proposition}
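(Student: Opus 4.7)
My approach is by contradiction, translating the combinatorial question about face-adjacency into a topological separation of a punctured neighborhood of $v$. The case $n = 1$ is immediate, since any two $1$-simplices in $\textup{st}(v)$ share the $0$-face $\{v\}$, so I assume $n \geq 2$ from here on.

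Let $\sim$ be the equivalence relation on $\textup{st}(v)$ generated by face-adjacency and suppose for contradiction that it admits at least two classes. Fix one class $C_1$, set $C_2 := \textup{st}(v) \setminus C_1$, and define $A = \bigcup_{K \in C_1} K$ and $B = \bigcup_{K \in C_2} K$. If $K \in C_1$ and $K' \in C_2$, then $K \cap K'$ is not an $(n-1)$-face (else $K \sim K'$), hence $\dim(K \cap K') \leq n - 2$. Therefore $Z := A \cap B$ is a finite union of subsimplices of $\mathcal{M}$ of dimension at most $n - 2$, i.e., a PL subset of $|\mathcal{M}|$ of codimension at least $2$.

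Let $A^{\circ}$ and $B^{\circ}$ denote the interiors of $A$ and $B$ in $|\textup{st}(v)| \setminus \{v\}$. For a point $x$ in the relative interior of a subsimplex $\tau$ of $\mathcal{M}$, a neighborhood of $x$ in $|\mathcal{M}|$ is contained in $\bigcup_{K' \in \textup{st}(\tau)} K'$, so $x$ lies in $A^{\circ}$ (resp. $B^{\circ}$) if and only if every element of $\textup{st}(\tau)$ belongs to $C_1$ (resp. $C_2$). From this one checks that $A^{\circ}$ and $B^{\circ}$ are disjoint (a nonempty open subset of the PL $n$-manifold $|\textup{st}(v)| \setminus \{v\}$ cannot lie in the lower-dimensional set $Z$), that both are nonempty (take a relative interior point of any element of the chosen class, whose only containing mesh element is itself), and that together they cover $(|\textup{st}(v)| \setminus \{v\}) \setminus Z$ (a point outside $Z$ lies in only one of $A$ and $B$, forcing all elements of the associated $\textup{st}(\tau)$ into a single class).

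It remains to show that $(|\textup{st}(v)| \setminus \{v\}) \setminus Z$ is connected, which will contradict the separation by the open sets $A^{\circ}$ and $B^{\circ}$. By regularity there is an open neighborhood $W$ of $v$ in $|\mathcal{M}|$ homeomorphic to $\R^n$ or to a half-space; by shrinking, $W \subset |\textup{st}(v)|$ since $\mathcal{M}$ is finite and $v$ is at positive distance from elements not in $\textup{st}(v)$. For $n \geq 2$, $W \setminus \{v\}$ is path-connected, and any $x \in |\textup{st}(v)| \setminus \{v\}$ is joined into $W \setminus \{v\}$ by a segment directed towards $v$ inside a convex mesh element containing it (stopping short of $v$), so $|\textup{st}(v)| \setminus \{v\}$ itself is path-connected. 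Finally, since $Z$ is a PL subset of codimension at least $2$ in the PL $n$-manifold $|\textup{st}(v)| \setminus \{v\}$, standard PL general position permits any path to be perturbed to avoid $Z$. The main technical hurdle is precisely this last step---namely, that removing a codimension-$\geq 2$ PL subset from a connected PL $n$-manifold preserves connectedness---which I would appeal to as a classical fact from PL topology, or equivalently derive from a dimension-theoretic argument (a subset of topological covering dimension $\leq n-2$ cannot separate a connected $n$-manifold).
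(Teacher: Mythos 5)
Your approach is genuinely different from the paper's: the paper establishes this proposition by citation (Boissonnat--Yvinec for face-connectedness of ``$d$-triangulations'', combined with Hudson's Corollary 1.16 to verify that $\textup{st}(v)$ qualifies), whereas you give a direct topological argument. Your core idea --- reformulating the claim as a separation of the punctured star, and observing that the interface $Z$ between the two putative classes has codimension $\geq 2$ --- is the right one, and most of the setup (the characterization of $A^\circ$, $B^\circ$, their disjointness, nonemptiness, and the fact that they cover the complement of $Z$) is careful and correct.

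However, the final step contains a genuine gap. You invoke PL general position (or dimension theory) under the assertion that $|\textup{st}(v)| \setminus \{v\}$ is a ``PL $n$-manifold''. This is not justified by the hypotheses and is false in general: $|\textup{st}(v)|$ is a cone over the link $\textup{lk}(v)$, and the paper's regularity assumption only says $|\mathcal{M}|$ is a topological $n$-manifold-with-boundary; it does \emph{not} force $\textup{lk}(v)$ to be an $(n-1)$-manifold. (By the Edwards double suspension theorem, for $n \geq 5$ one can triangulate $S^n$ so that some vertex link is a non-manifold homology sphere; then $|\textup{st}(v)| \setminus \{v\} \cong \textup{lk}(v) \times (0,1]$ is not a manifold.) Outside the manifold setting, ``codimension $\geq 2$ does not separate'' can fail --- a bowtie of two triangles meeting at a vertex is disconnected by removing that vertex --- so the appeal to a classical fact does not close the argument.

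The repair is short and uses the regularity hypothesis exactly where it is available: localize the separation argument to the Euclidean neighborhood $W$ you already introduced. Since $W \subset |\textup{st}(v)|$, the open sets $A^\circ \cap W$ and $B^\circ \cap W$ partition $W \setminus (\{v\} \cup Z)$. Each is nonempty: for any $K$ in either class, $\textup{int}(K)$ meets $W$ (take points $(1-t)v + t\,b_K$ with $b_K$ the barycenter and $t$ small), and such points lie in one of $A^\circ$, $B^\circ$ and not in $Z$. Finally, $W \setminus (\{v\}\cup Z)$ \emph{is} connected for $n\geq 2$, because $W$ is homeomorphic to $\R^n$ or to a half-space and $\{v\}\cup Z$ is a polyhedron of dimension $\leq n-2$, hence of codimension $\geq 2$ in an honest manifold; this is the classical dimension-theoretic fact you wanted to use, now applied where it is actually valid. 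This yields the contradiction without ever needing $|\textup{st}(v)| \setminus \{v\}$ itself to be a manifold, and also dispenses with your path-connectedness step for the full punctured star.
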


\subsubsection*{The sets $\Omega$ and $\Gamma$} For the remainder of this paper, we fix $\Omega \subset \R^d$ a  {\em Lipschitz polyhedron}, that is, a bounded open set with Lipschitz regular boundary (see \cite[Definition 4.4]{evans2018measure}) such that $\overline{\Omega}$ admits a conforming, regular $d$-dimensional mesh $\mathcal{M}_{\Omega}$. We also fix a (possibly non-regular) mesh $\mathcal{M}_{\Gamma}$ such that
\[\mathcal{M}_{\Gamma} \subset \mathcal{F}(\mathcal{M}_{\Omega}) \setminus \partial \mathcal{M}_{\Omega}\,,\]
and let $\Gamma = \abs{\mathcal{M}_\Gamma}$.

\subsubsection*{Continuous function spaces} For an open set $U \subset \R^d$, $C^\infty(U)$ (resp. $C^\infty_c(U)$) is the set of infinitely differentiable functions on $U$ (resp.~which are furthermore compactly supported on $U$), and $C^\infty(\overline{U})$ is the set of restrictions to $U$ of elements of $C^\infty(\R^d)$. For $l \in \N$, let $H^l(U)$ be the completion of $C^\infty(U)$ for the norm 
\[\norm{f}^2_{H^s(U)} \isdef \sum_{\abs{\alpha} \leq l} \norm{D^\alpha f}^2_{L^2(U)}\,.\]
Here, we use the multi-index notation $\alpha = (\alpha_1,\alpha_2,\alpha_3)$, where $\alpha_1,\alpha_2,\alpha_3 \in \N = \{0,1,2,\ldots\}$, $\abs{\alpha} = \sum_{i = 1}^d \alpha_i$ and $D^\alpha = (\partial/\partial_{x_1})^{\alpha_1} (\partial/\partial_{x_2})^{\alpha_2} \ldots (\partial/\partial_{x_d})^{\alpha_d}$. A semi-norm is defined by $\abs{f}^2_{H^l(U)} \isdef \sum_{\abs{\alpha } = l} \abs{D^\alpha f}^2_{L^2(U)}$. If $U$ is a Lipschitz domain, we will also consider the fractional Sobolev space $H^s(U)$ for all $s > 0$, $s \notin \N$, defined as the completion of $C^\infty(U)$ for the norm 
\begin{equation}
	\label{eq:Hs}
	\norm{f}_{H^s(U)}^2 \isdef \norm{f}_{H^l(U)}^2 + \sum_{\abs{\alpha} = l}\abs{D^\alpha f}^2_{H^\mu(U)}\,,
\end{equation}
where $s = l + \mu$, $l \in \N$, $\mu \in (0,1)$, and $\abs{g}_{H^\mu(U)}$ is defined by 
\begin{equation}
	\label{eq:Gagliardo}
	\abs{g}^2_{H^\mu(U)} = \int_{U} \int_{U} \frac{\abs{g(x) - g(y)}^2}{\abs{x - y}^{d + 2\mu}}\,dx\,dy\,.
\end{equation}
Finally, let $H^s_{0,\Gamma}(\Omega)$ be the closure of $C^\infty_c(\Omega \setminus \Gamma)$ in $H^s(\Omega \setminus \Gamma)$. If $K \subset \R^d$ is a $d$-simplex, we write $H^s(K)$ in place of $H^s(\textup{int}(K))$ for all $s \geq 0$.

\subsubsection*{Traces}

If $K \subset \R^d$ is a $d$-simplex, the restriction operator $C^\infty(K) \to L^2(\partial K)$, $u \mapsto u_{|\partial K}$ has a unique continuous linear extension $\gamma_K:H^1(K) \to L^2(\partial K)$, and for all $u \in H^1(K)$ and $\varphi \in C^\infty(\R^d)^3$, 
\begin{equation}
	\label{eq:Green}
	\int_{\Omega} u(x) \div\varphi(x) + \nabla u(x)\cdot \varphi(x)\,dx = \int_{\partial K} (\gamma_K u)(x)\varphi(x) \cdot n_K(x)\,d\mathcal{H}^{d-1}(x)\,,
\end{equation}
where $n_K$ is the unit outer normal vector and $\mathcal{H}^{d-1}$ is the $(d-1)$-dimensional Hausdorff measure, see \cite[Theorem 4.6]{evans2018measure}.

We shall also require fractional Sobolev spaces defined over faces of $d$-simplices. Let $F$ be a $n$-simplex in $\R^d$, with $n \geq 2$ (but possibly $n < d$) we define $H^s(F)$ for $s \in (0,1)$ as the subset of $L^2(F)$ of functions satisfying 
\[\abs{u}^2_{H^s(F)} := \int_{F} \frac{|u(x)-u(y)|^2}{|x - y|^{n + 2s}} d\mathcal{H}^{n}(x)d\mathcal{H}^{n}(y)\,,\]
and let 
\[\norm{u}^2_{H^s(F)}:= \norm{u}^2_{L^2(F)} + \abs{u}^2_{H^s(F)}\,.\]
If $K$ is a $d$-simplex such that and $F \in \mathcal{F}(K)$ is a face of $K$, $s \in (\frac12,1]$, then there exists a constant $C$ (depending on $K$ and $s$) such that the restriction operator $\gamma_{[K,F]}: C^\infty(K) \to L^2(F)$ defined by $\gamma_{[K,F]} u := u_{|F}$
satisfies
\begin{equation}
	\label{eq:traceIneq}
	\norm{\gamma_{[K,F]} u}_{H^{s-\frac12}(F)} \leq C \norm{u}_{H^s(K)}
\end{equation}
for all $u \in H^s(K)$, see \cite[Theorem 3.37]{mclean2000strongly}. The operator $\gamma_{[K,F]}$ admits a unique linear continuous extension from $H^s(K) \to H^{s-\frac12}(F)$, which we denote again by $\gamma_{[K,F]}$. Furthermore, if $U \subset \R^d$ is an open set such that $\textup{int}(K) \subset U$, then for any $u \in H^1(U)$, one has $u_{|K} \in H^1(K)$ and we still denote
\[\gamma_{[K,F]}u := \gamma_{[K,F]} u_{|K}\,.\]

\subsubsection*{Discrete spaces} 
Given a conforming mesh $\Omega_h$ of $\Omega$ (possibly other than $\mathcal{M}_\Omega$ itself), we define
\begin{equation}
	V^p(\Omega_h;\Gamma) \isdef \enstq{u \in {H}^1(\Omega \setminus \Gamma)}{u_{|K} \in \mathbb{P}_p\, \textup{ for all } K \in \Omega_h}\,
\end{equation}
where $\mathbb{P}_p$ is the set of polynomials of degree $p$. We also define
\[\begin{split}
	V^p(\Omega_h) &:= \enstq{u \in H^1(\Omega)}{u_{|K} \in \mathbb{P}_p \textup{ for all } K \in \Omega_h}\\
	&= V^p(\Omega_h;\Gamma) \cap H^1(\Omega)\,.
\end{split}\]
\subsubsection*{Mesh conditions} Given a conforming mesh $\Omega_h$ of $\Omega$, we say that $\Omega_h$ {\em resolves} $\Gamma$ if there is a conforming mesh of $\Gamma$ made of faces of $\Omega_h$, that is 
$$\Gamma = \abs{\Gamma_h} \quad \textup{with} \quad \Gamma_h \subset \mathcal{F}(\Omega_h)\,.$$
Furthermore, $\Omega_h$ {\em strictly encloses} $\Gamma$ if 
$$(K \cap \partial \Omega \neq \emptyset) \Rightarrow (K \cap \Gamma = \emptyset) \qquad\forall K \in \Omega_h\,.$$

\subsubsection*{Lagrange nodes} Let $S \subset \R^d$ be a $n$-simplex, and label its vertices $v_1,\ldots,v_{n+1}$. The {\em Lagrange nodes} of $S$ are the points of S defined by 
\[\mathcal{L}_p(S):= \enstq{\frac{1}{p} \sum_{i = 1}^{n+1} \alpha_i v_i}{\alpha \in \mathbb{N}^{n+1} \textup{ s.t. } |\alpha| = p}\,.\]
Note that if $S'$ is a subsimplex of $S$, then the Lagrange nodes on $S'$ are Lagrange nodes on $S$, i.e. $\mathcal{L}_p(S') \subset \mathcal{L}_p(S)$.
The {\em Lagrange nodes} of a mesh $\mathcal{M}$ are the Lagrange nodes of its elements, i.e.
\[\mathcal{L}_p(\mathcal{M}):= \bigcup_{K \in \mathcal{M}} \mathcal{L}_p(K)\]

\section{Construction of $\Pi_h$}
\label{sec:construction}

In this section we define $\Pi_h$ and prove \Cref{thm:alg1,thm:cont1}. The construction involves a primal and a dual basis, which are defined in \Cref{sec:BasisMulti} and \Cref{sec:dualBasis}, respectively. The definition of $\Pi_h$ and the proof are given in \Cref{sec:defPih}. 

From now on, we fix integers $d\geq 2$ and $p \geq 1$ and real numbers $\gamma_0 >0$ and $t > \frac12$. $\Omega$ and $\Gamma$ are fixed as in the previous section. We write $a \lesssim b$ when there exists $C > 0$ whose value depends only on $\Omega$, $\Gamma$, $d$, $p$, $\gamma_0$ and $t$, such that $a \leq Cb$. We fix a conforming $\gamma_0$-shape-regular mesh $\Omega_h$ of $\Omega$ which resolves and strictly encloses $\Gamma$, and let $\Gamma_h$ be the mesh of $\Gamma$ constituted of faces of $\Omega_h$. 

\subsection{Primal basis of $V^p(\Omega_h;\Gamma)$}

\label{sec:BasisMulti}

It is well-known that a piecewise-polynomial function $u_h$ on $\Omega_h$ belongs to $H^1(\Omega)$ if, and only if, its polynomial values in any two adjacent mesh elements ``match" at the common face. The condition $u_h \in H^1(\Omega \setminus \Gamma)$ similarly involves the continuity of traces at faces of adjacent elements, except when this face is in $\Gamma$.
\begin{lemma}
	\label{lem:match}
	Let $f: \Omega \setminus \Gamma \to \R$ be such that for all $K \in \Omega_h$, there exists $f_K \in H^1(\Omega)$ such that $f_{|K} = f_K$. Then the following properties are equivalent 
	\begin{itemize}
		\item[(i)] $f \in H^1(\Omega \setminus \Gamma)$, 
		\item[(ii)] For any pair of adjacent mesh elements $K,K' \in \Omega_h$ sharing a face $F\notin \Gamma_h$, \[\gamma_{[K,F]}f_K = \gamma_{[K',F]} f_{K'}\,.\]
	\end{itemize} 
\end{lemma}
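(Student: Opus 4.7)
My approach uses the weak-gradient characterization of $H^1(\Omega \setminus \Gamma)$ from Remark~\ref{rem:H1}. The natural candidate for the weak gradient is the piecewise-classical gradient $p$ defined by $p := \nabla f_K$ on the interior of each $K \in \Omega_h$; since each $f_K$ lies in $H^1(\Omega)$, we have $p \in L^2(\Omega \setminus \Gamma)^d$. The central computation, obtained by applying Green's identity \eqref{eq:Green} on each $K$ and summing, is that for every $\varphi \in \mathcal{D}(\Omega \setminus \Gamma)^d$,
\begin{equation}
\int_{\Omega \setminus \Gamma} \bigl( f \div \varphi + p \cdot \varphi \bigr)\,dx = \sum_{K \in \Omega_h} \int_{\partial K} (\gamma_K f_K)\,(\varphi \cdot n_K)\,d\mathcal{H}^{d-1}.
\label{eq:planGreen}
\end{equation}
The boundary faces on the right fall into three categories: faces on $\partial \Omega$, faces in $\Gamma_h$, and interior faces not in $\Gamma_h$. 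Since $\varphi$ vanishes in a neighborhood of $\Gamma \cup \partial \Omega$, only the third category contributes, and after pairing each such face $F$ between its two incident elements $K,K'$ (with $n_{K'} = -n_K$) the right-hand side collapses to $\sum_{F \notin \Gamma_h} \int_F \bigl( \gamma_{[K,F]} f_K - \gamma_{[K',F]} f_{K'} \bigr)\,(\varphi \cdot n_K)\,d\mathcal{H}^{d-1}$.

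The direction $(ii)\Rightarrow(i)$ then follows immediately: under (ii) every term in this sum vanishes, so $p$ meets the weak-gradient condition \eqref{eq:weakGrad} and hence $f \in H^1(\Omega \setminus \Gamma)$. For the converse $(i)\Rightarrow(ii)$, the weak gradient $\tilde p$ of $f$ exists; testing the weak-gradient identity against $\varphi \in \mathcal{D}(\textup{int}(K))^d$ identifies $\tilde p_{|K}$ with $\nabla f_K$ almost everywhere, so $\tilde p = p$ and \eqref{eq:planGreen} forces the interior-face sum to vanish for every admissible $\varphi$. To extract an equality at a single face $F \notin \Gamma_h$ shared by $K,K'$, I would localize by taking $\varphi = \psi\,n_K$ with $\psi \in \mathcal{D}(\Omega \setminus \Gamma)$ supported in a small ball centered at a point $x_0 \in \textup{relint}(F) \setminus \Gamma$, chosen small enough that the ball meets only $K, K'$ among elements of $\Omega_h$ and only the face $F$ among their faces. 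The sum then reduces to $\int_F \bigl( \gamma_{[K,F]} f_K - \gamma_{[K',F]} f_{K'} \bigr)\,\psi\,d\mathcal{H}^{d-1} = 0$, and letting $\psi$ vary so that its restrictions to $F$ form a dense family in $L^2(F)$ yields the desired trace equality.

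The main obstacle is this localization step, and in particular the existence of the point $x_0 \in \textup{relint}(F) \setminus \Gamma$, which is not automatic because $\Gamma$ may be non-manifold and could approach $F$ in complicated ways. The decisive fact is that $\Gamma_h \subset \mathcal{F}(\Omega_h)$ is a conforming simplicial mesh of $\Gamma$: since $F \in \mathcal{F}(\Omega_h) \setminus \Gamma_h$, any element of $\Gamma_h$ distinct from $F$ meets $F$ only along a common subsimplex of dimension at most $d-2$. Consequently $\Gamma \cap F$ has zero $(d-1)$-dimensional Hausdorff measure, so $\textup{relint}(F) \setminus \Gamma$ is a non-empty relatively open subset of $F$ of full $(d-1)$-measure, which is precisely what the density argument requires. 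Everything else (Green's identity on each simplex, the pairing of interior faces, the density of restrictions of $\mathcal{D}(\Omega \setminus \Gamma)$-bumps in $L^2$ of the localization neighborhood) is then routine.
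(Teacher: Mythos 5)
Your proposal follows exactly the route the paper takes: invoke the Meyers--Serrin weak-gradient characterization of $H^1(\Omega\setminus\Gamma)$, take the piecewise classical gradient as candidate, apply Green's identity elementwise and pair interior faces, with the converse obtained by localizing test functions near a point of $\mathrm{relint}(F)\setminus\Gamma$. The paper dismisses these details as ``classical,'' whereas you supply them, correctly identifying (and resolving, via the conformity of $\Gamma_h\subset\mathcal F(\Omega_h)$) the one step that is not entirely routine in the non-manifold setting, namely that $\Gamma\cap F$ has zero $(d-1)$-measure so the density argument on $F$ goes through.
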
 
\begin{proof}
	Recall that by the $H = W$ theorem \cite{meyersHW1964}, $f \in H^1(\Omega \setminus \Gamma)$ if and only if there exists a square-integrable vector field $g \in L^2(\Omega \setminus \Gamma)^d$ such that 
	\[\int_{\Omega \setminus \Gamma} f(x) \,\textup{div}(\varphi(x))\,dx = -\int_{\Omega \setminus\Gamma} g(x) \cdot \varphi(x)dx \] 
	for all $\varphi(x) \in C^\infty_c(\Omega \setminus \Gamma)^d$. The proof that this happens if and only if (ii) holds is classical.
\end{proof}
One sees from this result that elements of $V^p(\Omega_h;\Gamma)$ may be discontinuous, and especially have non-zero jumps through faces in $\Gamma$. On the other hand, not all polynomial (of degree $p$) jumps are possible through a given face $F$. Indeed, if for some face $F \in \Gamma_h$ and some Lagrange node $\vec x$ on $F$, there is a chain of mesh elements $K_1,\ldots,K_r \in \Omega_h$ with $K_1 = K_{-}(F)$, $K_r = K_+(F)$, such that $K_{i}$ and $K_{i+1}$ share a face $F_i$ not in $\Gamma$ containing $\vec x$ (see Figure \ref{fig:chain}) then the polynomial functions $v_{h_{|K_i}}$ and $v_{h|_{K_{i+1}}}$ agree at $\vec x$. Thus, $v_{h|_{K^+(F)}}(\vec x)= v_{h|_{K^-(F)}}(\vec x)$, hence the jump $\llbracket v_h \rrbracket_F$ vanishes at $\vec x$, for all $v_h \in V^p(\Omega_h;\Gamma)$.

\begin{figure}
	\centering
	\includegraphics[width=0.5\textwidth]{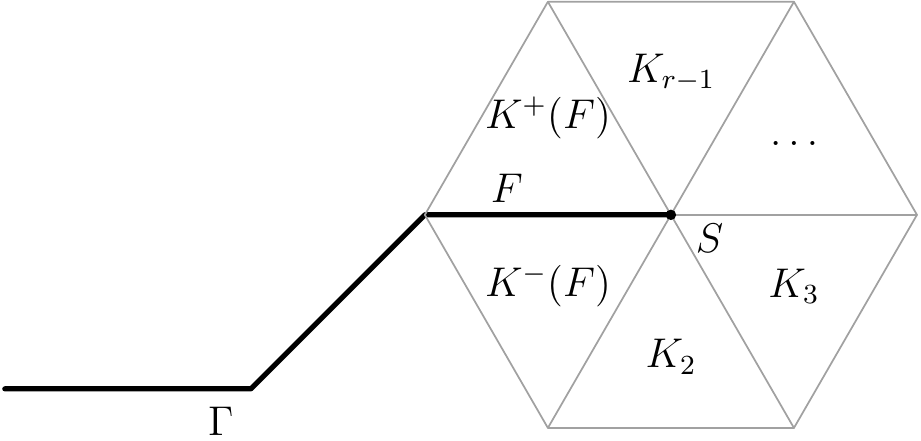}	
	\caption{A face-connected sequence of elements of $\Omega_h$ circling around a subsimplex $S$ and linking $K^-(F)$ with $K^+(F)$.}
	\label{fig:chain}
\end{figure}

To construct a basis of $V^p(\Omega_h;\Gamma)$, it is thus necessary to capture the couplings produced by this kind of chain of elements. In order to do this, denote by $\{\vec{x}_1\,,\ldots\,,\vec{x}_N\}$ the set of Lagrange nodes on $\Omega_h$, with
$$\{\vec x_1,\ldots,\vec x_M\} = \mathcal{L}_p(\Gamma_h) \quad \textup{and}\quad\{\vec x_{M+1}\,,\ldots\,,\vec x_{N}\} = \mathcal{L}_p(\Omega_h) \setminus \mathcal{L}_p(\Gamma_h)\,.$$ 
Moreover, let $\{\phi_i\}_{1 \leq i \leq N}$ be the nodal basis of $V^p(\Omega_h)$, that is, the set of elements of $V^p(\Omega_h)$ defined by
\[\phi_i(\vec{x}_{i'}) = \delta_{i,i'}\,, \quad 1 \leq i,i' \leq N\,.\] 
Define the {\em star} of a subsimplex $\sigma$ of $K \in \mathcal{M}$, as the mesh
$$\textup{st}(\sigma,\mathcal{M}) \isdef \enstq{K \in \mathcal{M}}{K \cap \sigma \neq \emptyset}\,,$$
and let $\textup{st}(\vec x_i) \isdef \textup{st}(\vec x_i,\Omega_h)$ be the set of mesh elements $K \in \Omega_h$ incident to $\vec x_i$. We define the following (unoriented) graph ${G}(\vec{x}_i)$ (see also \cite{averseng2022fractured}):
\begin{itemize}
	\item {\em Nodes:} The mesh elements $K \in \textup{st}(\vec x_i)$,
	\item {\em Edges:} The pairs $\{K,K'\} \subset \textup{st}(\vec x_i)$ such that $K$ and $K'$ share a face $F \notin \Gamma_h$. 
\end{itemize}
Let $q_i$ be the number of connected components of $G(\vec x_i)$. We can then partition the mesh elements $K \in \Omega_h$ incident to $\vec x_i$ as
\[\textup{st}(\vec x_i) = \textup{st}(\vec x_i;1) \cup \ldots \cup  \textup{st}(\vec x_i;q_i) \,.\] 
This is illustrated in Figure \ref{fig:stxi}.
\begin{figure}
	\centering
	\includegraphics[width=0.5\textwidth]{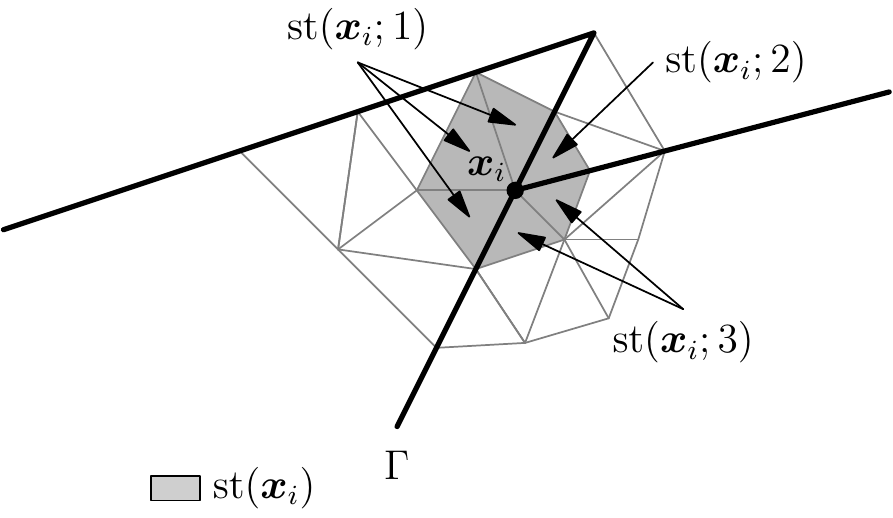}
	\caption{A Lagrange node $\vec x_i$ on the hypersurface $\Gamma$ (solid black line), its star $\textup{st}(\vec x_i)$ (filled in gray), and the components $\textup{st}(\vec x_i;j)$ for $j = 1,2,3$. The gray solid lines represent the boundary of some mesh elements of $\Omega_h$.}
	\label{fig:stxi}
\end{figure}
Each of the disjoint sets $\textup{st}(\vec x_i;j)$ corresponds to one connected component of $G(\vec x_i)$, and is thus a face-connected mesh, with face-connectedness holding through faces not in $\Gamma$.  The quantity $\max_{1 \leq i \leq N}q_i$ is invariant by mesh subdivision of $\Omega_h$ resolving $\Gamma$, and thus 
\begin{equation}
	\label{eq:qmax}
	\max_{1 \leq i \leq N} q_i \lesssim 1\,.
\end{equation}

Let $\mathcal{I} \isdef \enstq{(i,j) \in \N^2}{1 \leq i \leq N\,,\,\, 1 \leq j \leq q_i}$. For $(i,j) \in \mathcal{I}$, we define the {\em split basis function} $\phi_{i,j}: \Omega \setminus \Gamma \to \mathbb{R}$ by 
\[\phi_{i,j}(x) \isdef \begin{cases}\phi_i( x) & \textup{for } x \in \textup{int}(K) \textup{ such that } K \in \textup{st}(\vec x_i;j)\,, \\
	0 & \textup{otherwise}.	
\end{cases}\]
Note that $\sum_{j = 1}^{q_i} \phi_{i,j} = \phi_i$ and by Lemma \ref{lem:match}, $\phi_{i,j} \in V^p(\Omega_h;\Gamma)$ for all $(i,j) \in \mathcal{I}$. 
\begin{lemma}
	\label{basisVh}
	The family $\{\phi_{i,j}\}_{(i,j) \in \mathcal{I}}$ is a basis of $V^p(\Omega_h;\Gamma)$.
\end{lemma}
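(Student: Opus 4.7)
The plan is to establish the two standard basis properties separately: linear independence and spanning. Both rely on the fact that, on each face-connected component $\textup{st}(\vec x_i; j)$, the value of any $u \in V^p(\Omega_h;\Gamma)$ at the node $\vec x_i$ is unambiguously defined, because the component is connected through faces that are \emph{not} in $\Gamma$, so \Cref{lem:match} forces the polynomial traces of $u$ to match along those faces (and hence at the shared Lagrange node).

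For linear independence, I would suppose $\sum_{(k,\ell) \in \mathcal{I}} c_{k,\ell} \phi_{k,\ell} \equiv 0$ on $\Omega \setminus \Gamma$, fix a pair $(i,j) \in \mathcal{I}$, pick any element $K \in \textup{st}(\vec x_i; j)$, and take the limit of this sum at the Lagrange node $\vec x_i$ from within $\textup{int}(K)$. By definition of $\phi_{k,\ell}$, only the summands with $K \in \textup{st}(\vec x_k; \ell)$ survive; among those, the Lagrange property $\phi_k(\vec x_i) = \delta_{ki}$ kills every term except $(k,\ell) = (i,j)$. Hence $c_{i,j} = 0$.

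For spanning, given $u \in V^p(\Omega_h;\Gamma)$, I would define, for each $(i,j) \in \mathcal{I}$, the scalar
\[
u_{i,j} := \lim_{\substack{x \to \vec x_i \\ x \in \textup{int}(K)}} u(x)
\qquad \text{for any } K \in \textup{st}(\vec x_i; j),
\]
and first check that this is independent of the chosen $K$. This is exactly where \Cref{lem:match} is used: two elements $K, K' \in \textup{st}(\vec x_i; j)$ are linked by a face-connected chain inside $\textup{st}(\vec x_i;j)$ whose consecutive elements share faces \emph{not} in $\Gamma_h$, and along each such face the polynomial traces of $u$ coincide, so in particular $u$ evaluated at the common Lagrange node $\vec x_i$ has the same value in both elements. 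Then I would claim that $u = \sum_{(i,j) \in \mathcal{I}} u_{i,j} \phi_{i,j}$, and verify this element by element: for a fixed $K \in \Omega_h$ and each Lagrange node $\vec x_i \in \mathcal{L}_p(K)$, there is a unique $j(i,K)$ with $K \in \textup{st}(\vec x_i; j(i,K))$, so the right-hand side restricts on $K$ to $\sum_{\vec x_i \in \mathcal{L}_p(K)} u_{i,j(i,K)}\, \phi_i|_K$, which is the usual Lagrange interpolant of the polynomial $u|_K$ and therefore equals $u|_K$.

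The only genuinely non-routine point is the well-definedness of $u_{i,j}$, and its proof consists precisely of applying \Cref{lem:match} along the chain provided by the construction of the connected components of the graph $G(\vec x_i)$. Everything else is a direct consequence of the Lagrange property of $\{\phi_i\}$ and the fact that the sets $\textup{st}(\vec x_i;1), \ldots, \textup{st}(\vec x_i; q_i)$ partition $\textup{st}(\vec x_i)$.
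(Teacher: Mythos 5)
Your proof is correct and follows essentially the same route as the paper's: linear independence by letting points tend to the Lagrange node $\vec x_{i_0}$ from within a single element of $\textup{st}(\vec x_{i_0};j_0)$, and spanning by defining the coefficient as the one-sided nodal limit, using \Cref{lem:match} along a face-connected chain in $\textup{st}(\vec x_i;j)$ (through faces not in $\Gamma_h$) to justify its independence of the chosen element, and concluding by unisolvance on each $K$. The only cosmetic difference is the ordering: you establish well-definedness of $u_{i,j}$ up front, whereas the paper fixes a representative element $K_{i,j}$ first and then shows in the element-by-element verification that the limit from any other element agrees.
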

\begin{proof}
	Suppose that there exist coefficients $\lambda_{i,j}$ such that 
	\begin{equation}
		\label{linDepPhiIJ}
		\forall x \in \Omega \setminus \Gamma\,, \quad \sum_{(i,j) \in \mathcal{H}(\Omega)} \lambda_{i,j} \phi_{i,j}({x}) = 0\,.
	\end{equation}
	Fix $(i_0,j_0) \in \mathcal{H}(\Omega)$ and let $K \in \textup{st}(\vec x_{i_0};j_0)$. Let $( y_{n})_{n \in \N}$ be a sequence of points in the interior of $K$, such that 
	\[\lim_{n \to \infty} y_n = \vec x_{i_0}\,.\]
	It is easy to show that $\lim_{n \to \infty} \phi_{i,j}(y_n) \to \delta_{i,i_0} \delta_{j,j_0}$. Therefore, applying eq.~\eqref{linDepPhiIJ} to $y_n$ and passing to the limit, we conclude that $\lambda_{i_0,j_0} = 0$. This proves that the functions $\phi_{i,j}$ are linearly independent. 
	
	Next, we consider $u_h \in V^p(\Omega_h;\Gamma)$. For each $(i,j) \in \mathcal{H}(\Omega)$, we fix a mesh element $K_{i,j} \in \textup{st}(\vec x_i;j)$ and a sequence $(y^{i,j}_{n})_{n \in \N}$ converging to $\vec x_i$ from $K_{i,j}$, as above. Let $\lambda_{i,j} := \lim_{n \to \infty} u_h(y^{i,j}_n)$. We prove that 
	\begin{equation}
		\label{decompUhPhiIJ}
		u_h = \sum_{(i,j) \in \mathcal{H}(\Omega)} \lambda_{i,j} \phi_{i,j}\,,	
	\end{equation}
	by showing that this equality holds on the interior of each mesh element $K \in \Omega_h$. Thus let us fix an element $K$ and write its Lagrange nodes as $\vec x_{i_1}, \ldots, \vec x_{i_R}$. For each $r \in \{1,\ldots,R\}$, let $j_r$ be such that $K \in \textup{st}(\vec x_{i_r};j_r)$. Let $y_n^r$ be a sequence of points in the interior of $K$ converging to $\vec x_{i_r}$. We claim that 
	\begin{equation}
		\label{limitUh}
		\lim_{n \to \infty} \phi_{i,j}(y_n^r) = \delta_{i,i_r} \delta_{j,j_r}\,, \quad 
		\lim_{n \to \infty} u_h(y^{r}_n) = \lambda_{i_r,j_r}\,.
	\end{equation}
	Only the second limit deserves attention. It is immediate if $K = K_{i_r,j_r}$\footnote{If $\vec x_{i_r}$ is an interior Lagrange node, that is, $\vec x_{i_r} \in \textup{int}(K)$, then one always has $K = K_{i_r,j_r}$. However, if $\vec x_{i_r,j_r}$ lies on $\partial K$, then $K_{i_r,j_r}$ can be any other mesh element in $\textup{st}(\vec {x}_{i_r};j_r)$.}. If $K$ shares a face $F \notin \mathcal{M}_\Gamma$ with $K_{i_r,j_r}$, then it is a consequence of Lemma \ref{lem:match}. Otherwise, by definition of $G(\vec{x}_i)$, one can find a face-connected (through faces not in $\Gamma$) path of mesh elements from $K$ to $K_{i_p,j_p}$, and the desired limit is established by repeating the previous argument for each pair of consecutive elements in this path.
	
	Having established the equalities in \eqref{limitUh}, we conclude that the polynomials defined on $K$ by each side of eq.~\eqref{decompUhPhiIJ} coincide at all the Lagrange points in $K$, and are both of degree $p$, thus they are equal on $K$ by unisolvance of $\mathcal{L}_p(K)$.
\end{proof}
For $u_h \in V^p(\Omega_h;\Gamma)$, we will denote by $u_h(\vec x_{i,j})$ the coefficient of $u_h$ on the split basis function $\phi_{i,j}$, so that 
\[u_h = \sum_{(i,j) \in \mathcal{I}} u_h(\vec x_{i,j})\phi_{i,j}\,.\]
We define the discrete scalar product 
\begin{equation*}
	\forall (u_h,v_h) \in V^p(\Omega_h;\Gamma) \times V^p(\Omega_h;\Gamma)\,, \quad [u_h,v_h]_{l^2} \isdef \sum_{(i,j) \in \mathcal{I}} u_h(\vec x_{i,j}) v_h(\vec x_{i,j})\,.
\end{equation*}
We identify $V^p(\Omega_h)$ with the subspace of $V^p(\Omega_h;\Gamma)$ defined by $u_h(\vec x_{i,j}) = u_h(\vec x_{i,j'})$ for all $1 \leq j,j' \leq q_i$ and let $\Psi_h^p(\Omega ; \Gamma)$ be the $[\cdot,\cdot]_{l^2}$ orthogonal complement of $V^p(\Omega_h)$ in $V^p(\Omega_h;\Gamma)$. Let
\[\widetilde{\mathcal{I}}\isdef \enstq{(i,j) \in \mathcal{I}}{q_i > 1 \textup{ and } 1 \leq j \leq q_i - 1}\,,\]
and for $(i,j) \in \widetilde{\mathcal{I}}$, define 
\[\psi_{i,j} \isdef \phi_{i,j} - \phi_{i,q_i}\,.\]
Using that $\{\phi_i\}_{1 \leq i \leq N}$ is a basis of $V^p(\Omega_h)$, together with the property
\[\phi_{i} = \sum_{j = 1}^{q_i} \phi_{i,j}\]
and \Cref{basisVh}, one deduces that $\{\psi_{i,j}\}_{(i,j) \in \widetilde{\mathcal{I}}}$ is a basis of $\Psi^p(\Omega_h; \Gamma)$.
\begin{corollary}
	There holds
	\begin{equation}
		V^p(\Omega_h;\Gamma) = V^p(\Omega_h) \oplus \Psi^p(\Omega_h; \Gamma)\,,
	\end{equation}
	and $\{\phi_i\}_{1 \leq i \leq N}$, $\{\psi_{i,j}\}_{(i,j) \in \widetilde{\mathcal{I}}}$ are bases of $V^p(\Omega_h)$ and $\Psi^p(\Omega_h; \Gamma)$, respectively.
\end{corollary}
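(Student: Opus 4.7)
The plan is to unpack the definitions; nothing deeper than finite-dimensional linear algebra is required. The direct sum decomposition $V^p(\Omega_h;\Gamma) = V^p(\Omega_h) \oplus \Psi^p(\Omega_h;\Gamma)$ is immediate once I observe that $[\cdot,\cdot]_{l^2}$ is a genuine inner product (symmetric, bilinear, positive definite) on the finite-dimensional space $V^p(\Omega_h;\Gamma)$: indeed, by \Cref{basisVh}, the coordinates $u_h(\vec x_{i,j})$ in the basis $\{\phi_{i,j}\}_{(i,j)\in\mathcal{I}}$ vanish simultaneously only if $u_h = 0$, so the quadratic form $u_h \mapsto \sum_{(i,j)\in\mathcal{I}} u_h(\vec x_{i,j})^2$ is positive definite. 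The decomposition then follows from the standard fact that a finite-dimensional inner-product space splits as a subspace and its orthogonal complement.

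That $\{\phi_i\}_{1\leq i\leq N}$ is a basis of $V^p(\Omega_h)$ is a direct consequence of the identification chosen in the paragraph preceding the corollary: an element of $V^p(\Omega_h;\Gamma)$ lies in $V^p(\Omega_h)$ iff its coordinates in $\{\phi_{i,j}\}$ are constant over the $j$-index at each $i$, and the equality $\phi_i = \sum_{j=1}^{q_i} \phi_{i,j}$ identifies each $\phi_i$ as the canonical representative of the indicator of index $i$. Linear independence and spanning both transfer from \Cref{basisVh}.

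For the family $\{\psi_{i,j}\}_{(i,j)\in\widetilde{\mathcal{I}}}$, I would proceed in three short steps. \emph{Membership in $\Psi^p$:} since $\phi_k \in V^p(\Omega_h)$ has coordinates $\phi_k(\vec x_{a,b}) = \delta_{a,k}$ independent of $b$, and $\psi_{i,j}$ has coordinates $+1$ at $(i,j)$, $-1$ at $(i,q_i)$, and $0$ elsewhere, the pairing $[\psi_{i,j},\phi_k]_{l^2}$ collapses to $\delta_{i,k}(1-1) = 0$. \emph{Linear independence:} expanding a vanishing linear combination $\sum_{(i,j)\in\widetilde{\mathcal{I}}} \mu_{i,j}\psi_{i,j} = 0$ in the basis $\{\phi_{i,j}\}$ yields
\[
\sum_{(i,j)\in\widetilde{\mathcal{I}}} \mu_{i,j}\phi_{i,j} \;-\; \sum_{i\,:\,q_i>1}\Bigl(\sum_{j=1}^{q_i-1}\mu_{i,j}\Bigr)\phi_{i,q_i} \;=\; 0,
\]
so \Cref{basisVh} forces each $\mu_{i,j}=0$. \emph{Dimension count:} $\dim V^p(\Omega_h;\Gamma) = |\mathcal{I}| = \sum_i q_i$ and $\dim V^p(\Omega_h) = N$, so $\dim \Psi^p(\Omega_h;\Gamma) = \sum_i (q_i - 1) = |\widetilde{\mathcal{I}}|$, matching the cardinality of the proposed family, which therefore spans.

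There is no real obstacle here; the corollary is a repackaging of \Cref{basisVh} with the elementary fact that a positive-definite bilinear form produces a direct-sum decomposition through orthogonal complementation. The only piece of bookkeeping to flag explicitly is the observation that the coordinates of $\phi_k$ in the basis $\{\phi_{a,b}\}$ are independent of $b$, which is what makes the telescoping $[\psi_{i,j},\phi_k]_{l^2}=0$ work and which is just a reflection of the way $V^p(\Omega_h)$ sits inside $V^p(\Omega_h;\Gamma)$.
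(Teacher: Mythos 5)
Your proof is correct and takes essentially the same route as the paper, which states the corollary without an explicit proof, relying instead on the remark that the conclusion follows from the orthogonal-complement definition of $\Psi^p(\Omega_h;\Gamma)$, the fact that $\{\phi_i\}$ is the nodal basis of $V^p(\Omega_h)$, the identity $\phi_i = \sum_j \phi_{i,j}$, and \Cref{basisVh}. Your write-up is simply a careful unpacking of that one-sentence deduction: positive definiteness of $[\cdot,\cdot]_{l^2}$ via \Cref{basisVh}, the resulting direct-sum splitting, membership of the $\psi_{i,j}$ in the orthogonal complement, and the dimension count — all of which are the intended steps.
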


\subsection{Dual basis of $V^p(\Omega_h;\Gamma)$}
\label{sec:dualBasis} 
With the primal basis 
$\{\phi_i\}_{1 \leq i \leq N} \cup \{\psi_{i,j}\}_{(i,j) \in \widetilde{\mathcal{I}}}$
at hand, we now set out to define a ``dual basis" $\{N_i\}_{1 \leq i \leq N} \cup \{N_{i,j}\}_{(i,j) \in \widetilde{\mathcal{I}}}$, consisting of linear forms on $N_i, N_{k,l} : H^1(\Omega \setminus \Gamma) \to \R$ such that, among other properties,
\[\begin{array}{ccccccc}
	N_i(\phi_{i'}) &=& \delta_{i,i'}\,, && N_i(\psi_{k',l'}) &=& 0\,,\\
	N_{k,l}(\phi_{i'}) &=& 0\,, && N_{k,l}(\psi_{k',l'}) &=& \delta_{k,k'}\delta_{l,l'}\,,\\
\end{array}\]
for all $1 \leq i,i' \leq N$ and $(k,l),(k',l') \in \widetilde{\mathcal{I}}$. 
As in Scott and Zhang's original construction  \cite{scott1990finite}, we will use Lagrange dual polynomials:
\begin{definition}[Lagrange dual polynomials]
	For any $\gamma_0$ shape-regular $n$-simplex $S \subset \R^d$ and any Lagrange node $\vec x \in \mathcal{L}_p(S)$, there exists a polynomial $\psi_{[S,\vec x]} \in \mathbb{P}_p$ such that 
	\begin{equation}
		\label{eq:defPsiSx}
		\int_{S} \psi_{[S,\vec x]}(y) P(y)\,d\mathcal{H}^n(y) = P(\vec x)
	\end{equation}
	for all $P \in \mathbb{P}_r$, and furthermore,  
	\[\norm{\psi_{[S,\vec x]}}_{L^2(F)}  \lesssim \textup{diam}(S)^{-n/2}\,. \]
\end{definition}

This is shown by considering the dual basis of the vector space $\mathbb{P}_r$ on the reference element, representing it under the form \eqref{eq:defPsiSx} (with $S$ replaced by the reference element) via the Riesz representation theorem, and mapping back to $S$, tracking the scaling of the $L^2$ norm accordingly, see \cite[Lemma 3.1]{scott1990finite} and \cite[eq. (3.3)]{camacho2015pointwise}. 

\begin{lemma}
	\label{lem:testFunctions}
	Let $u_h \in V^p(\Omega_h;\Gamma)$ and let $K \in \Omega_h$, $F \in \mathcal{F}(K)$ and $\vec x_i \in \mathcal{L}^p(F)$ a Lagrange node on $F$. Let $1 \leq j \leq q_i$ be such that $K \in \textup{st}(\vec x_i;j)$. Then
	\[\int_{F} \psi_{[F,\vec x_i]} \gamma_{[K,F]} u_h(x)\,d\mathcal{H}^{d-1}(x) = u_h(\vec x_{i,j})\,.\]
\end{lemma}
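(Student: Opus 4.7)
The plan is to combine the defining property of the Lagrange dual polynomial with the characterization of $u_h(\vec{x}_{i,j})$ as a nodal value of the polynomial piece $u_h|_K$ for $K \in \textup{st}(\vec{x}_i; j)$.

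First, I would observe that since $u_h \in V^p(\Omega_h;\Gamma)$, the restriction $u_h|_K$ is a polynomial of degree $p$ on $K$, and consequently $P := \gamma_{[K,F]} u_h = (u_h|_K)|_F$ is a polynomial of degree at most $p$ on the face $F$. The face $F$ is a $(d-1)$-simplex, and $\vec{x}_i \in \mathcal{L}_p(F)$, so by the defining property \eqref{eq:defPsiSx} of $\psi_{[F,\vec{x}_i]}$ we immediately obtain
\[\int_F \psi_{[F,\vec{x}_i]}(x)\,\gamma_{[K,F]}u_h(x)\,d\mathcal{H}^{d-1}(x) = P(\vec{x}_i) = (u_h|_K)(\vec{x}_i).\]
This reduces the lemma to showing the pointwise identity $(u_h|_K)(\vec{x}_i) = u_h(\vec{x}_{i,j})$ whenever $K \in \textup{st}(\vec{x}_i; j)$.

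Second, I would extract this identity from the argument already used in the proof of \Cref{basisVh}. Recall that $u_h(\vec{x}_{i,j})$ is, by definition, the coefficient of $u_h$ on $\phi_{i,j}$, which equals $\lim_{n \to \infty} u_h(y_n^{i,j})$ for a sequence of interior points $y_n^{i,j}$ of some fixed reference element $K_{i,j} \in \textup{st}(\vec{x}_i; j)$ converging to $\vec{x}_i$. By continuity of the polynomial $u_h|_{K_{i,j}}$, this limit equals $(u_h|_{K_{i,j}})(\vec{x}_i)$. Similarly, approaching $\vec{x}_i$ from interior points of $K$ yields $(u_h|_K)(\vec{x}_i)$. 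So it suffices to show these two polynomial values coincide.

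Third, this coincidence comes from face-connectedness of $\textup{st}(\vec{x}_i; j)$ through faces outside $\Gamma$. By construction of the graph $G(\vec{x}_i)$, there exists a chain $K = K_1, K_2, \ldots, K_r = K_{i,j}$ of elements of $\textup{st}(\vec{x}_i; j)$ such that consecutive $K_s, K_{s+1}$ share a face $F_s \notin \Gamma_h$ which contains $\vec{x}_i$ (since both elements are incident to $\vec{x}_i$ and their shared face must therefore contain it as well; this uses the face-connectedness inside one component of $G(\vec{x}_i)$). \Cref{lem:match} applied to $u_h \in H^1(\Omega \setminus \Gamma)$ yields $\gamma_{[K_s,F_s]} u_h = \gamma_{[K_{s+1},F_s]} u_h$, and evaluating this equality of polynomials at the point $\vec{x}_i \in F_s$ gives $(u_h|_{K_s})(\vec{x}_i) = (u_h|_{K_{s+1}})(\vec{x}_i)$. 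Chaining these equalities along the path yields $(u_h|_K)(\vec{x}_i) = (u_h|_{K_{i,j}})(\vec{x}_i) = u_h(\vec{x}_{i,j})$, completing the proof.

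The main obstacle is conceptual rather than technical: it is making precise the step where traces across non-$\Gamma$ faces identify the nodal values at $\vec{x}_i$, but this is already essentially contained in the proof of \Cref{basisVh} (equation \eqref{limitUh} there), so I would simply invoke it rather than reprove it in detail.
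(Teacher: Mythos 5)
Your proof is correct and is a fleshed-out version of the paper's one-line argument (``immediate from the definition of $\phi_{i,j}$ and $\psi_{[F,\vec x_i]}$''): step 1 applies the defining property of the dual polynomial, and steps 2--3 unpack what $u_h(\vec x_{i,j})$ means via the face-connected chain used in \Cref{basisVh}, which is exactly the content of the paper's terse justification. The only spot worth tightening is the parenthetical claim that the shared face $F_s$ contains $\vec x_i$: this holds because mesh conformity gives $K_s \cap K_{s+1} = F_s$ for adjacent elements, so any common point such as $\vec x_i$ automatically lies in $F_s$.
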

\begin{proof}
	This is immediate from the definition of $\phi_{i,j}$ and $\psi_{[F,\vec x_i]}$. 
\end{proof}

The next lemma deals with the construction of $\{N_i\}_{1 \leq i \leq N}$. 
\begin{lemma}
	\label{lem:singleBasis}
	For every $i \in \{1,\ldots,N\}$, there exists a linear form $N_i: H^1(\Omega \setminus \Gamma) \to \R$, such that the following properties hold:
	\begin{itemize}
		\item[(i)] $N_i(u_h) = 0$ for all $u_h \in \Psi^p(\Omega_h;\Gamma)$ 
		\item[(ii)] $N_i(\phi_{i'}) = \delta_{i,i'}$ for all $i,i' \in \{1,\ldots,N\}$. 
		\item[(iii)] If $u \in H^1_{0,\Gamma}(\Omega)$, then $N_i(u) = 0$ for all $i$ such that $\vec x_i \in \Gamma \cup \partial \Omega$.
		\item[(iv)] $N_i$ is a linear combination of terms of the form
		\[w \mapsto \int_{K} \psi_{[K,\vec x_i]} w(x)d\mathcal{H}^d(x)\]
		and
		\[w \mapsto \int_{F} \psi_{[F,\vec x_i]}(x)\gamma_{[K_F,F]} w(x)\,d \mathcal{H}^{d-1}(x)\,,\]
		where $K$ (resp. $F$) is a mesh element of $\Omega_h$ (resp. of $\mathcal{F}(\Omega_h)$) incident to $\vec x_i$ and $K_F \in \textup{st}(\vec x_i)$ is such that $F \in \mathcal{F}(K_F)$. The number $N$ of terms in this linear combination satisfies $N \lesssim 1$.
	\end{itemize}
	
\end{lemma}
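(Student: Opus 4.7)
The plan is to construct each $N_i$ as a weighted average of the second type of functional listed in (iv), evaluated on a family of faces $\sigma_{i,j}$ chosen, one per ``side'' $j \in \{1,\dots,q_i\}$, inside the component $\textup{st}(\vec x_i;j)$. Concretely, I would set
\[
N_i(u) \isdef \frac{1}{q_i} \sum_{j=1}^{q_i} \int_{\sigma_{i,j}} \psi_{[\sigma_{i,j},\vec x_i]}(x)\, \gamma_{[K_{i,j},\sigma_{i,j}]}u(x)\, d\mathcal{H}^{d-1}(x),
\]
where $K_{i,j} \in \textup{st}(\vec x_i;j)$ and $\sigma_{i,j} \in \mathcal{F}(K_{i,j})$ contains $\vec x_i$. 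The choice of $\sigma_{i,j}$ depends on where $\vec x_i$ lives: if $\vec x_i \in \Gamma$ I require $\sigma_{i,j} \in \Gamma_h$; if $\vec x_i \in \partial \Omega$ (so $q_i=1$ by the strict enclosure hypothesis) I take $\sigma_{i,1} \subset \partial \Omega$; otherwise ($\vec x_i \in \textup{int}(\Omega)\setminus \Gamma$) I may take any face incident to $\vec x_i$ (or replace the face integral by a volume integral over some $K \in \textup{st}(\vec x_i)$, which is also allowed by (iv)).

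The first substantive step is the combinatorial existence claim: for $\vec x_i \in \Gamma$ and each side $j$, some element of $\textup{st}(\vec x_i;j)$ has a face in $\Gamma_h$ passing through $\vec x_i$. This follows from \Cref{prop:faceconnectedness} together with the observation that any $(d-1)$-face shared by two elements of $\textup{st}(\vec x_i)$ must contain $\vec x_i$ (the shared face is the simplex on the common vertices of $K,K'$, which include $\vec x_i$ when these have a single common vertex of the full simplex). Hence a face-connected path from $\textup{st}(\vec x_i;j)$ to another component of $G(\vec x_i)$ (which exists if $q_i > 1$) must at some step cross a $\Gamma_h$-face incident to $\vec x_i$ that borders $\textup{st}(\vec x_i;j)$.

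The verification then reduces to algebra via \Cref{lem:testFunctions}, which identifies the $j$-th summand with $u_h(\vec x_{i,j})$ for any $u_h \in V^p(\Omega_h;\Gamma)$, giving
\[
N_i(u_h) = \frac{1}{q_i}\sum_{j=1}^{q_i} u_h(\vec x_{i,j}).
\]
Property (ii) then follows from $\phi_{i'}(\vec x_{i,j}) = \delta_{i,i'}$ for all $j$; property (i) follows because $\psi_{k,l} = \phi_{k,l}-\phi_{k,q_k}$ satisfies $\sum_j \psi_{k,l}(\vec x_{i,j}) = \delta_{i,k}(1-1)=0$ for every $(k,l)\in \widetilde{\mathcal{I}}$. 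Property (iii) is immediate: when $\vec x_i \in \Gamma \cup \partial\Omega$ every $\sigma_{i,j}$ lies in $\Gamma \cup \partial\Omega$, and elements of $H^1_{0,\Gamma}(\Omega)$ have vanishing trace on this set by approximation from $C^\infty_c(\Omega \setminus \Gamma)$. Property (iv) holds by construction, with at most $q_i \lesssim 1$ terms by \eqref{eq:qmax}.

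The main obstacle is enforcing (i) and (ii) simultaneously: a one-sided definition, taking only one face $\sigma_i$ on a single side of $\vec x_i$, would respect (ii) but fail (i) as soon as $q_i > 1$, because it would extract $u_h(\vec x_{i,j_0})$ rather than an average. The uniform weight $1/q_i$ is precisely what picks out the $V^p(\Omega_h)$-component in the splitting $V^p(\Omega_h;\Gamma) = V^p(\Omega_h) \oplus \Psi^p(\Omega_h;\Gamma)$ — the latter being the $[\cdot,\cdot]_{l^2}$-orthogonal of $V^p(\Omega_h)$, i.e., the zero-side-sum subspace — and thus reconciles (i) with (ii). All other ingredients (face existence, bound on the number of summands) are standard consequences of the shape-regular polyhedral setup.
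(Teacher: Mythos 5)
Your construction is essentially identical to the paper's: the same $1/q_i$-weighted average of face integrals over $\Gamma_h$-faces through $\vec x_i$ (one per component of $G(\vec x_i)$), reducing to the single Scott--Zhang control simplex (a face, or for interior Lagrange nodes a volume integral) when $q_i = 1$, with (i) and (ii) verified via \Cref{lem:testFunctions} giving $N_i(u_h) = \frac{1}{q_i}\sum_j u_h(\vec x_{i,j})$. The combinatorial claim you prove inline --- that each component $\textup{st}(\vec x_i;j)$ contains an element with a $\Gamma_h$-face through $\vec x_i$ --- is isolated as \Cref{lem:simple} in the paper, established by exactly the face-connected-path argument you sketch.
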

\begin{proof}
	Let $i \in \{1,\ldots,N\}$ be such that $\vec x_i \notin \Gamma$. We then choose a ``control simplex" $\sigma_i = \sigma_{i,1}$ such that $\vec x_i \in \sigma_i$ as in \cite{scott1990finite}. That is, if $\vec x_i$ is an interior Lagrange node, then we take for $\sigma_i$ the unique mesh element $K_i \in \Omega_h$ such that $\vec x_i \in K_i$, and otherwise, we choose for $\sigma_i$ any mesh face in $\mathcal{F}(\Omega_h)$ containing $\vec x_i$, with the restriction that $F\subset \partial \Omega$ if $\vec x_i \in \partial\Omega$. We then put
	\[N_i(u) \isdef \int_{\sigma_i} \psi_{[\sigma_i,\vec x_i]}(x) u(x)\,d\mathcal{H}^{n_i}(x)\,,\]
	where $n_i \in \{d-1,d\}$ is the dimension of $\sigma_i$, and where we understand $u(x)$ as $\gamma_{[K_i,\sigma_i]} u(x)$ if $\sigma_i$ is a face of $K_i$. Note that (i) holds since, when $q_i = 1$, there is no $j$ such that $(i,j) \in \widetilde{\mathcal{I}}$.
	
	On the other hand if $\vec x_i \in \Gamma$, then according to \Cref{lem:simple}, we can choose $K_{i,1} \in \textup{st}(\vec x_i;1),\ldots,K_{i,q_i} \in  \textup{st}(\vec x_i;q_i)$ such that for all $j = 1,\ldots,q_j$, $K_{i,j}$ has a face $F_{i,j}$ in $\Gamma$ with $\vec x_i\in F_{i,j}$. We then set
	\[N_i(u) \isdef \frac{1}{q_i}\sum_{j = 1}^{q_i} \int_{F_{i,j}} \psi_{[F_{i,j},\vec x_i]}(x) \gamma_{[K_{i,j},F_{i,j}]}u(x)\,d\mathcal{H}^{d-1}(x)\,.\]
	One has $N_i(u_h) = \frac{1}{q_i}\sum_{j = 1} u_h(\vec x_{i,j})$ for all $u_h \in V^p(\Omega_h;\Gamma)$ by \Cref{lem:testFunctions}, which implies (i) and (ii). The property (iii) is also immediate due to the choice of the simplices $\sigma_i$ and $F_{i,j}$. Finally, \Cref{prop:solidangles} implies that the number of elements in $\textup{st}(\vec x_i)$ is $\lesssim 1$, which, in combination with the upper bound \eqref{eq:qmax} on $q_i$, establishes the claim about $N$ in (iv).
\end{proof}
We have used the following lemma. 
\begin{lemma}
	\label{lem:simple}
	For each $(i,j) \in \mathcal{I}$ such that $q_i > 1$, there exists an element $K^* \in \textup{st}(\vec x_{i};j)$ which possesses a face $F \subset \Gamma$ and such that $\vec x_i \in F$. 
\end{lemma}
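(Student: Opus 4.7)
The plan is to exploit the contrast between the face-connectedness of $\textup{st}(\vec x_i)$ guaranteed by \Cref{prop:faceconnectedness} and the disconnectedness of the auxiliary graph $G(\vec x_i)$, which is obtained from the full adjacency graph on $\textup{st}(\vec x_i)$ by deleting exactly those edges corresponding to shared faces that lie in $\Gamma_h$.

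A preliminary observation I would establish first: if $K, K' \in \textup{st}(\vec x_i)$ are distinct adjacent elements sharing a $(d-1)$-face $F$, then $\vec x_i \in F$. Indeed, two distinct $d$-simplices that share a $(d-1)$-face have that face as their full intersection (the intersection is a subsimplex of both and contains an $(d-1)$-simplex, so it equals that simplex), and so $\vec x_i \in K \cap K' = F$; since $\vec x_i$ is a vertex of $K$, it is in particular a vertex of the subsimplex $F$.

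Next, since $q_i > 1$, I can pick any index $j' \neq j$ and any element $K' \in \textup{st}(\vec x_i; j')$. Starting from an arbitrary $K \in \textup{st}(\vec x_i; j)$, \Cref{prop:faceconnectedness} provides a face-connected path $K = K_1, K_2, \ldots, K_N = K'$ inside $\textup{st}(\vec x_i)$. Let $n_0$ be the largest index such that $K_{n_0} \in \textup{st}(\vec x_i; j)$; this exists because $K_1 \in \textup{st}(\vec x_i; j)$, and $n_0 < N$ because $K_N \notin \textup{st}(\vec x_i; j)$. Then $K_{n_0}$ and $K_{n_0 + 1}$ lie in distinct connected components of $G(\vec x_i)$, so by the very definition of the edges of $G(\vec x_i)$, the face $F$ that they share must belong to $\Gamma_h$. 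Combining with the preliminary observation, $F \subset \Gamma$ and $\vec x_i \in F$, so $K^* := K_{n_0}$ is an element of $\textup{st}(\vec x_i; j)$ with the required properties.

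The only subtle point is the preliminary observation, which is why I would state it upfront; everything else reduces to a routine ``first exit from a component'' argument comparing the two graphs, using that any path in $\textup{st}(\vec x_i)$ leaving the component $\textup{st}(\vec x_i; j)$ must cross an edge that was removed when passing from the adjacency graph of $\textup{st}(\vec x_i)$ to $G(\vec x_i)$, and such an edge corresponds precisely to a face in $\Gamma_h$.
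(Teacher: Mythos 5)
Your argument is correct and follows essentially the same route as the paper's proof: invoke the local face-connectedness of $\textup{st}(\vec x_i)$ from \Cref{prop:faceconnectedness} to obtain a face-connected path from $\textup{st}(\vec x_i;j)$ to an element outside it, then locate the transition pair, whose shared face cannot be an edge of $G(\vec x_i)$ (the two endpoints lie in different components) and therefore lies in $\Gamma_h$, and which contains $\vec x_i$ because it equals the intersection of the two adjacent simplices. One minor slip: the clause ``since $\vec x_i$ is a vertex of $K$, it is in particular a vertex of $F$'' in your preliminary observation is not literally accurate for $p>1$ (Lagrange nodes need not be vertices), but the preceding line $\vec x_i \in K\cap K'=F$ already gives exactly what is needed, so the slip is harmless. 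Note also that the paper's own proof sets $K^* := K_q$, the \emph{first} element not in $\textup{st}(\vec x_i;j)$, which appears to be a typo for $K_{q-1}$; your choice (the last element still inside $\textup{st}(\vec x_i;j)$ before the transition) is the one that actually satisfies the requirement of the lemma.
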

\begin{proof}
	Let $K \in \textup{st}(\vec x_i;j)$ and let $K' \in \textup{st}(\vec x_i)$ such that $K' \notin \textup{st}(\vec x_i;j)$. Since $\Omega_h$ is a regular mesh, by \Cref{prop:faceconnectedness} there is a face-connected path $K = K_0,K_1,\ldots,K_Q = K'$ in $\textup{st}(\vec x)$. Let $q \in \{1,\ldots,Q\}$ be the first index such that $K_q \notin \textup{st}(\vec x_i;j)$. Then $K^* \isdef K_q$ satisfies the requirements. 
\end{proof}

We now deal with the construction of the second family of linear forms. The goal is to prove the following Lemma:
\begin{lemma}
	\label{lem:jumpBasis}
	There exists a family of linear forms $\{N_{i,j}\}_{(i,j) \in \widetilde{\mathcal{I}}}$ such that the following properties hold:
	\begin{itemize}
		\item[(i)] $N_{i,j}(u) = 0$ for all $u \in H^1(\Omega)$,
		\item[(ii)] $N_{i,j}(\psi_{i',j'}) = \delta_{i,i'}\delta_{j,j'}$ for all $(i,j),(i',j') \in \widetilde{\mathcal{I}}$.
		\item[(iii)] $N_{i,j}$ is a linear combination of terms of the form
		\[w \mapsto \int_{F} \psi_{[F,\vec x_i]}(x)\gamma_{[K,F]} w(x)\,d\mathcal{H}^{d-1}(x)\,,\]
		where $F$ is a face incident to $\vec x_i$ and $K \in \textup{st}(\vec x_i)$ is such that $F \in \mathcal{F}(K)$. The number $N$ of terms in this linear combination satisfies $N \lesssim 1$.
	\end{itemize}
\end{lemma}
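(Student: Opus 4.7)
The plan is to construct each $N_{i,j}$ as a linear combination of \emph{bridge functionals} associated with the node $\vec x_i$. For $(i,j) \in \widetilde{\mathcal{I}}$ (so $\vec x_i \in \Gamma$ and $q_i \geq 2$), call an unordered pair $\{k,\ell\} \subset \{1,\ldots,q_i\}$ with $k \neq \ell$ a \emph{bridge pair at $\vec x_i$} if there exists $F \in \Gamma_h$ incident to $\vec x_i$ such that the two elements $K^\pm(F)$ lie in $\textup{st}(\vec x_i;k)$ and $\textup{st}(\vec x_i;\ell)$ respectively. Fix one such face $F_{k,\ell}$ for each bridge pair, and let $K^+_{k,\ell}\in\textup{st}(\vec x_i;k)$ and $K^-_{k,\ell}\in\textup{st}(\vec x_i;\ell)$ denote its two incident volume elements. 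Set
\[B^i_{k,\ell}(u) := \int_{F_{k,\ell}} \psi_{[F_{k,\ell},\vec x_i]}(x)\bigl(\gamma_{[K^+_{k,\ell},F_{k,\ell}]} u - \gamma_{[K^-_{k,\ell},F_{k,\ell}]} u\bigr)\,d\mathcal{H}^{d-1}(x).\]
If $u \in H^1(\Omega)$, the two traces agree and $B^i_{k,\ell}(u)=0$. For $u_h \in V^p(\Omega_h;\Gamma)$, \Cref{lem:testFunctions} gives $B^i_{k,\ell}(u_h) = u_h(\vec x_{i,k}) - u_h(\vec x_{i,\ell})$; in particular $B^i_{k,\ell}(\psi_{i',j'}) = 0$ whenever $i' \neq i$, since every $\phi_{i',\cdot}$ vanishes at $\vec x_{i,\cdot}$.

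The central step is to show that the undirected graph $H(\vec x_i)$ with vertex set $\{1,\ldots,q_i\}$ and edges the bridge pairs is connected. Given sides $k \neq \ell$, choose $K \in \textup{st}(\vec x_i;k)$ and $K' \in \textup{st}(\vec x_i;\ell)$; by \Cref{prop:faceconnectedness} there is a face-connected chain $K = K_0,K_1,\ldots,K_Q = K'$ in $\textup{st}(\vec x_i)$. For each $q$, the shared face $F_q \in \mathcal{F}(K_q)\cap\mathcal{F}(K_{q+1})$ must contain $\vec x_i$ by conformity. If $F_q \notin \Gamma_h$ then $K_q$ and $K_{q+1}$ are joined by an edge of $G(\vec x_i)$, so they lie in the same side; otherwise the sides they occupy are either equal or form a bridge pair. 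Reading off the sequence of side labels thus yields a walk from $k$ to $\ell$ in $H(\vec x_i)$ (after erasing stutters), proving connectivity.

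From this, a standard linear-algebra fact (the rank of the signed incidence matrix of a connected graph on $q$ vertices is $q-1$) tells us that the $B^i_{k,\ell}$, regarded via $v_h \mapsto (v_h(\vec x_{i,m}))_{m=1}^{q_i}$ as linear forms $v \mapsto v_k - v_\ell$ on $\R^{q_i}$, span the full $(q_i-1)$-dimensional annihilator of $\R\mathbf{1}$. Since $L_j(v) := v_j - \tfrac{1}{q_i}\sum_m v_m$ lies in this annihilator and satisfies $L_j(e_{j'}-e_{q_i}) = \delta_{j,j'}$ for $j' = 1,\ldots,q_i-1$, there exist coefficients $\mu^{i,j}_{k,\ell}\in\R$ such that
\[N_{i,j} \;:=\; \sum_{\{k,\ell\}\text{ bridge pair at }\vec x_i} \mu^{i,j}_{k,\ell}\, B^i_{k,\ell}\]
satisfies $N_{i,j}(\psi_{i,j'}) = \delta_{j,j'}$. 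Combined with the vanishing on $\psi_{i',j'}$ for $i' \neq i$, this gives (ii); (i) follows from the corresponding property of each $B^i_{k,\ell}$; and (iii) is immediate, since the number of bridge pairs at $\vec x_i$ is bounded by the number of faces in $\textup{st}(\vec x_i)$, hence $\lesssim 1$ by \Cref{prop:solidangles} together with the bound \eqref{eq:qmax} on $q_i$.

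The main obstacle is the connectivity argument in the middle paragraph: one must show that the quotient graph obtained by contracting each side of $G(\vec x_i)$ to a point, while keeping only the bridge edges, remains connected. This is exactly the combinatorial content alluded to at the end of \Cref{sec:outline}, and is what makes the dual system algebraically solvable. The remainder is bookkeeping with standard linear algebra.
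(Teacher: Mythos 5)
Your construction is essentially the paper's: you build the same bridge functionals, prove connectivity of the same bridge graph via \Cref{prop:faceconnectedness}, and solve the resulting linear system for the dual coefficients. The one thing you leave unaddressed is the size of the coefficients $\mu^{i,j}_{k,\ell}$: you assert they exist, but the paper (in the unnamed lemma immediately preceding this one) additionally shows one can choose $q_i-1$ bridges forming a basis so that the resulting linear system has an integer matrix with entries in $\{-2,\dots,2\}$, whence the coefficients are uniformly bounded, $\lambda_{i,q}\lesssim 1$. That bound is not part of the literal statement of property (iii) (which only asks for the \emph{number} of terms to be $\lesssim 1$), so your proof does discharge the statement as written; but the coefficient bound is used implicitly in \Cref{lem:aux} and the final estimates of \Cref{thm:cont1}, so if you intend this to plug into the rest of the argument you should add a sentence observing that, by connectivity, one can pick $q_i-1$ bridges whose incidence submatrix is invertible with entries in $\{-1,0,1\}$ (and $\{-2,\dots,2\}$ after subtracting the mean), and then invoke the finiteness of the set of such matrices of size $\le Q \lesssim 1$ to bound $\mu^{i,j}_{k,\ell}\lesssim 1$.
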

Each $N_{i,j}$ will be constructed as a linear combination of ``bridge functions" that we define now. 
\begin{definition}[Bridge functions]
	Given $k,\ell \in \{1,\ldots,q_i\}$, we say that $\{k,\ell\}$ is a {\em bridge} around $\vec{x}_i$ if there exist two mesh elements $K_k \in \textup{st}(\vec x_i;k)$, $K_\ell \in \textup{st}(\vec x_i;\ell)$ sharing a face $F_{k\ell} \isdef K_k \cap K_\ell \subset \Gamma$. The set of bridges around $\vec{x}_i$ is denoted by $\mathcal{B}(i)$. 
	Given a bridge $\{k,\ell\} \in \mathcal{B}(i)$ with $k < \ell$, we define the {\em bridge function} $N_{i,\{k,\ell\}}$ by 
	\[N_{i,\{k,\ell\}}(u) \isdef \int_{F_{k\ell}} \psi_{[F_{k\ell},\vec x_i]}(x) \big(\gamma_{[K_k,F_{k\ell}]}  - \gamma_{[K_\ell,F_{k\ell}]} \big)u(x)\,d\mathcal{H}^{d-1}(x)\,.\] 
\end{definition}
Note that, by \Cref{lem:testFunctions}, for all $u_h \in V^p(\Omega_h;\Gamma)$, 
\[N_{i,\{k,\ell\}}(u_h) = u_h(\vec x_{i,k}) - u_h(\vec x_{i,\ell})\,.\]
We now fix $i$ such that $q_i > 1$ and consider the vector space \begin{equation*}
	F_i \isdef \textup{Span}(\{\psi_{i,j}\}_{1 \leq j \leq q_i - 1})\,.
\end{equation*}
Let $F_i^*$ be the set of linear forms $L_i: F_i \to \R$, and let $n_{i,\{k,\ell\}} \in F_i^*$ be the restriction of $N_{i,\{k,\ell\}}$ to $F_{i}$.  

\begin{lemma}
	\label{lem:bridge}
	The bridge functions around $\vec x_i$ span $F_i^*$, i.e.,
	\[F_i^* = \textup{Span}(\{n_{i,\{k,\ell\}}\}_{\{k,\ell\} \in \mathcal{B}(i)})\,.\] 
\end{lemma}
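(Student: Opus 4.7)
The plan is to use a finite-dimensional duality argument: since $F_i$ has dimension $q_i - 1$, the bridge functionals span $F_i^*$ if and only if $\bigcap_{\{k,\ell\} \in \mathcal{B}(i)} \ker(n_{i,\{k,\ell\}}) = \{0\}$. To set up this intersection concretely, I would write any $\psi \in F_i$ as $\psi = \sum_{j=1}^{q_i-1} \alpha_j \psi_{i,j}$ and introduce the values $\beta_m := \psi(\vec x_{i,m})$ for $m \in \{1,\ldots,q_i\}$. Using $\psi_{i,j} = \phi_{i,j} - \phi_{i,q_i}$ and $\phi_{i,j}(\vec x_{i,m}) = \delta_{j,m}$, one obtains $\beta_m = \alpha_m$ for $m < q_i$, $\beta_{q_i} = -\sum_{j} \alpha_j$, and in particular the relation $\sum_{m=1}^{q_i} \beta_m = 0$. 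The identity stated immediately after the definition of the bridge functions then rephrases each constraint $n_{i,\{k,\ell\}}(\psi) = 0$ as the simple equality $\beta_k = \beta_\ell$.

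The heart of the argument is then to show that the unoriented \emph{bridge graph} $H(\vec x_i)$, whose vertices are $\{1,\ldots,q_i\}$ and whose edges are the bridges $\mathcal{B}(i)$, is connected. Given two sides $k \neq \ell$, I would pick $K \in \textup{st}(\vec x_i;k)$ and $K' \in \textup{st}(\vec x_i;\ell)$; by \Cref{prop:faceconnectedness}, $\textup{st}(\vec x_i)$ is face-connected, so there is a sequence $K = K_0, K_1, \ldots, K_Q = K'$ of elements of $\textup{st}(\vec x_i)$ with consecutive ones sharing a face. A short dimensional count (if the shared face were opposite to $\vec x_i$ in both $K_q$ and $K_{q+1}$, the two simplices would coincide) shows that the shared face of any two distinct simplices of $\textup{st}(\vec x_i)$ automatically contains $\vec x_i$. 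Letting $r_q \in \{1,\ldots,q_i\}$ denote the side index of $K_q$, the shared face between $K_q$ and $K_{q+1}$ is either not in $\Gamma$, which by the very definition of $G(\vec x_i)$ forces $r_q = r_{q+1}$, or it lies in $\Gamma$, in which case either $r_q = r_{q+1}$ or $\{r_q, r_{q+1}\}$ is an edge of $H(\vec x_i)$. Deleting consecutive repeats from $r_0, r_1, \ldots, r_Q$ produces a path in $H(\vec x_i)$ from $k$ to $\ell$.

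With the connectedness of $H(\vec x_i)$ in hand, the conclusion is immediate: if $\psi \in F_i$ satisfies $n_{i,\{k,\ell\}}(\psi) = 0$ for every bridge, then $\beta_k = \beta_\ell$ along every edge of $H(\vec x_i)$, so all $\beta_m$ coincide; combined with $\sum_m \beta_m = 0$ this forces every $\beta_m$ to vanish, hence $\alpha_j = 0$ for all $j$, hence $\psi = 0$. The main obstacle is isolating the combinatorial content of \Cref{prop:faceconnectedness} as the connectedness of $H(\vec x_i)$; once this is done, the remaining linear algebra is routine.
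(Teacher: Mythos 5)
Your proof is correct and follows essentially the same route as the paper: reduce spanning of $F_i^*$ to triviality of the common kernel, translate the bridge conditions into equality of nodal values, and conclude from connectedness of the bridge graph (your $H(\vec x_i)$ is the paper's $G^*(\vec x_i)$), which in turn rests on the face-connectedness of $\textup{st}(\vec x_i)$ from \Cref{prop:faceconnectedness}. The only difference is that you spell out how face-connectedness of $\textup{st}(\vec x_i)$ yields connectedness of the bridge graph (via the side indices $r_q$ along a face-connected path), a step the paper leaves implicit.
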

\begin{proof}
	By a classical result on linear forms (see e.g. \cite[Lemma 3.9]{rudin1991functional}), it suffices to show that if $u_h \in F_i$ satisfies 
	\begin{equation}
		\label{eq:condBridge}
		n_{i,\{k,\ell\}}(u_h) = 0 \quad \forall \{k,\ell\} \in \mathcal{B}(i)\,,
	\end{equation}
	then $u_h = 0$. We first note that $u_h \in F_i$ implies that $\sum_{j= 1}^{q_i} u_h(\vec x_{i,j}) = 0$
	since this property holds for every $\psi_{i,j}$. Hence to conclude it remains to show that $u_h(\vec x_{i,j}) = u_h(\vec x_{i,j'})$ for $1 \leq j,j' \leq q_i$. For this, introduce the graph ${G}^*(\vec{x}_i)$ defined by
	\begin{itemize}
		\item {\bf Nodes}: the numbers $1,\ldots,q_i$,
		\item {\bf Edges}: the bridges $\{k,k'\}$.
	\end{itemize} 
	The key point is that ${G}^*(\vec x_{i})$ is connected, because $\textup{st}(\vec x_i)$ is face-connected by \Cref{prop:faceconnectedness}. Eq.~\eqref{eq:condBridge} immediately implies $u_h(\vec x_{i,j}) = u_h(\vec x_{i,j'})$ if $\{j,j'\} \in \mathcal{B}(i)$, and the same follows for a general pair $\{j,j'\} \in \{1,\ldots,q_i\}$ by using a path from $j$ to $j'$ in ${G}^*(\vec x_i)$. This concludes the proof of the Lemma. 
\end{proof}
We denote by $\{{n}_{i,j}\}_{1 \leq j \leq q_i - 1}$ the basis of $F_i^*$ such that
$n_{i,j}(\psi_{i,j'}) = \delta_{j,j'}$.
\begin{lemma}
	For every $(i,j) \in \widetilde{\mathcal{I}}$, there exists $$\{k_1,\ell_1\},\ldots,\{k_{q_i-1},\ell_{q_i - 1}\} \in \mathcal{B}(i)\,, \qquad \lambda_{i,1}\,,\ldots\,,\lambda_{i,q_i - 1} \lesssim 1$$ 
	such that 
	\[n_{i,j} = \sum_{q = 1}^{q_i - 1} \lambda_{i,q} n_{i,\{k_q,\ell_q\}}\,.\]
\end{lemma}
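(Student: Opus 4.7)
My plan is to choose the bridges $\{k_1,\ell_1\},\ldots,\{k_{q_i-1},\ell_{q_i-1}\}$ as the edges of a spanning tree of the graph $G^*(\vec x_i)$, and then read off the coefficients $\lambda_{i,q}$ from an explicit telescoping formula. Since $G^*(\vec x_i)$ is connected (as established in the proof of \Cref{lem:bridge}) and has $q_i$ vertices, it admits a spanning tree $T$ with exactly $q_i - 1$ edges, which matches the dimension of $F_i^*$. Denote these edges by $\{k_1,\ell_1\},\ldots,\{k_{q_i-1},\ell_{q_i-1}\} \in \mathcal{B}(i)$. Note that the same tree will work simultaneously for all $j$, only the coefficients depending on $j$.

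To make the construction concrete, I identify $F_i$ with $V := \{v \in \R^{q_i} : \sum_{m=1}^{q_i} v_m = 0\}$ via the coordinate map $u_h \mapsto (u_h(\vec x_{i,m}))_{1\leq m \leq q_i}$. Using $\psi_{i,j}(\vec x_{i,m}) = \delta_{j,m} - \delta_{q_i,m}$, a direct computation shows that under this identification $n_{i,j}(v) = v_j$ and $n_{i,\{k,\ell\}}(v) = v_k - v_\ell$. Fixing $j \in \{1,\ldots,q_i-1\}$, I then proceed in two steps. First, telescoping along the unique path in $T$ from $q_i$ to $j$ writes $v_j - v_{q_i}$ as a signed sum of tree-bridge values $(v_{k_q} - v_{\ell_q})$ with signs in $\{-1,+1\}$. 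Second, the constraint $\sum_m v_m = 0$ rearranges to $v_{q_i} = -\tfrac{1}{q_i}\sum_{k\neq q_i}(v_k - v_{q_i})$, and applying the same telescoping to each $(v_k - v_{q_i})$ expresses $v_{q_i}$ as a linear combination of tree-bridge values. Adding both contributions yields the claimed identity $n_{i,j} = \sum_{q=1}^{q_i-1} \lambda_{i,q}\, n_{i,\{k_q,\ell_q\}}$, and simultaneously shows that the tree-bridge functionals span $F_i^*$ -- hence form a basis by the dimension count.

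The only nontrivial step is the coefficient bound, but it is mild. Removing a tree edge $\{k_q,\ell_q\}$ from $T$ splits $T$ into two subtrees, and the edge lies on the $T$-path from $q_i$ to a vertex $k$ precisely when $k$ belongs to the component not containing $q_i$; there are at most $q_i - 1$ such $k$. Therefore $\{k_q,\ell_q\}$ contributes to $\lambda_{i,q}$ with coefficient $\pm 1$ at most once (from the telescoping of $v_j - v_{q_i}$) and with coefficient $\pm \tfrac{1}{q_i}$ at most $q_i - 1$ times (from the decomposition of $v_{q_i}$), giving $|\lambda_{i,q}| \leq 1 + \tfrac{q_i - 1}{q_i} < 2$. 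Combined with the uniform bound $q_i \lesssim 1$ from \eqref{eq:qmax}, this establishes $\lambda_{i,q} \lesssim 1$ and completes the proof. As an alternative to the explicit telescoping, one can argue abstractly that the change-of-basis matrix between $(n_{i,j})_j$ and the tree-bridge functionals has entries in a bounded set of integers with nonzero determinant, and whose size is bounded by $q_i - 1 \lesssim 1$, so its inverse has uniformly bounded entries as well; I prefer the telescoping since it yields the sharper explicit constant $2$.
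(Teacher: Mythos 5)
Your argument is correct, and it takes a genuinely different route from the paper's. The paper picks an arbitrary subset of bridges forming a basis of $F_i^*$, writes down the change-of-basis matrix $\mathbf{A}$ with entries $n_{i,\{k_p,\ell_p\}}(\psi_{i,p'}) \in \{-2,\dots,2\}$, and bounds $\lambda_{i,q}$ by observing that the set of invertible integer matrices of size at most $Q = \max_i (q_i-1) \lesssim 1$ with entries in $\{-2,\dots,2\}$ is finite, so $\sup |||\mathbf{A}^{-1}|||_\infty$ over this set is a finite constant depending only on $Q$. You instead choose the bridges to be the edges of a spanning tree $T$ of $G^*(\vec x_i)$, identify $F_i$ with the mean-zero hyperplane of $\R^{q_i}$, and invert the system by hand via telescoping along $T$-paths, which produces the explicit bound $|\lambda_{i,q}| < 2$. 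Your approach buys a cleaner constant and a constructive solution (and in passing re-proves that the tree edges span $F_i^*$, making \Cref{lem:bridge} redundant once a spanning tree is in hand); the paper's approach is shorter to state and does not require singling out a spanning tree, but the constant is implicit. Both crucially rest on $q_i \lesssim 1$, which you correctly cite. One small point worth tightening in a written version: when a tree edge is removed, the $m \leq q_i - 1$ telescoping contributions from the $v_{q_i}$-decomposition all carry the \emph{same} sign (they come from the unique orientation across that cut), so there is no hidden cancellation issue and the bound $|\lambda_{i,q}| \leq 1 + m/q_i$ is exact rather than merely an over-count.
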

\begin{proof}
	Let $Q = \max_{1 \leq i \leq N} (q_i - 1)$. Since $\textup{dim}(F_i^*) = \textup{dim}(F_i) = q_i - 1$, one can find a set of pairs $\{k_q,\ell_q\}_{1 \leq q \leq q_i - 1}$ such that $\{n_{i,\{k_q,\ell_q\}}\}_{1 \leq q \leq q_i - 1}$ is a basis of $F_i^*$. One can write $n_{i,j}$ in a unique way as
	\[n_{i,j} = \sum_{q = 1}^{q_i - 1} \lambda_{i,q} n_{i,\{k_q,\ell_q\}}\,. \]
	The coefficients $\lambda_{i,q}$ can be found by solving the linear system
	\[\mathbf{A} \begin{bmatrix}
		\lambda_{i,1}&
		\ldots&
		\lambda_{i,q_i - 1}
	\end{bmatrix}^T = \vec e_j\,,\]
	where $\vec e_j$ is the $j$-th vector of the canonical basis of $\R^{q_i - 1}$ and the matrix coefficients 
	\[\mathbf{A}_{p,p'} \isdef \psi_{i,p'}(\vec x_{i,k_q}) - \psi_{i,q'}(\vec x_{i,\ell_q})\]
	are integer between $-2$ and $2$. The set $\mathcal{S}_{Q}$ of invertible square matrices $\mathbf{A}$ of size at most $Q$ with coefficients in $\{-2,-1,0,1,2\}$ is finite. Therefore, we can write 
	\begin{equation}
		\label{eq:indepBound}
		\abs{\lambda_{i,q}} \leq \sup_{\mathbf{A} \in \mathcal{S}_{Q}}|||\mathbf{A}^{-1}|||_{\infty}\,, \quad \forall q \in \{1\,,\ldots\,,q_i - 1\}\,,
	\end{equation} 
	where, for an element $v$ of $\R^r$ and a square $r \times r$ matrix $M$, we have denoted 
	\[\norm{v}_{\infty} \isdef \max_{1 \leq q \leq r} \abs{v_q}\,, \quad ||| M |||_{\infty} \isdef \sup_{v \in R^{r}} \frac{\norm{M v}_{\infty}}{\norm{v}_{\infty}} \,.\] 
	Since $Q \lesssim 1$, the right-hand side of the inequality \eqref{eq:indepBound} is $\lesssim 1$.
\end{proof}
\begin{proof}[Proof of \Cref{lem:jumpBasis}]
	We define  
	\[N_{i,j} \isdef \sum_{q = 1}^{q_i - 1} \lambda_{i,q} N_{i,\{k_q,\ell_q\}}\,.\]
	The properties (i)-(iii) follow immediately by construction and the uniform bound on the number of elements in $\textup{st}(\vec x_i)$. 
\end{proof}

\subsection{Definition of $\Pi_h$ and proof of \Cref{thm:alg1,thm:cont1}} 
\label{sec:defPih}
We define 
\begin{equation}
	\label{defPih}
	\Pi_h u \isdef \sum_{i = 1}^N N_i(u)\, \phi_i\,\,\, + \hspace{-0.3cm} \sum_{(i,j) \in \widetilde{\mathcal{I}}} \hspace{-0.3cm} N_{i,j}(u) \,\psi_{i,j}\,.
\end{equation}
\Cref{thm:alg1} is a consequence of the properties of $N_i$ and $N_{i,j}$. The rest of this section is devoted to the proof of \Cref{thm:cont1}. 
\begin{lemma}[Bramble-Hilbert lemma]
	\label{lem:BH2}
	Let $p,d \geq 1$ be integers, $\gamma_0> 0$ a real number. Then for any $\gamma_0$-shape-regular $d$-simplex $K$ and $0 < t \leq p+1$, and $u \in H^t(K)$, there exists $p_K \in \mathbb{P}_p$ such that
	\begin{equation}
		\label{eq:BHlem}
		\norm{u - p_K}_{H^s(K)} \leq C(\gamma_0)h_K^{t-s} \abs{u}_{H^t(K)}
	\end{equation}
\end{lemma}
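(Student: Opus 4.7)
The plan is the standard Bramble--Hilbert/Deny--Lions pull-back/push-forward argument. Let $\hat K \subset \R^d$ be a fixed reference $d$-simplex and let $F_K : \hat K \to K$ be the affine bijection $F_K(\hat x) = B_K \hat x + b_K$. Shape-regularity gives the uniform bounds $\norm{B_K} \lesssim h_K$, $\norm{B_K^{-1}} \lesssim h_K^{-1}$ and $\abs{\det B_K} \sim h_K^d$, with constants depending only on $\gamma_0$ and $d$.

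On the reference element $\hat K$, I would invoke the classical Deny--Lions inequality: the quotient norm of $H^t(\hat K)/\mathbb{P}_p$ is equivalent to the seminorm $\abs{\cdot}_{H^t(\hat K)}$ provided $t \leq p+1$. Concretely, for $\hat u \isdef u \circ F_K \in H^t(\hat K)$, there exists $\hat p \in \mathbb{P}_p$ with
\[
\norm{\hat u - \hat p}_{H^t(\hat K)} \leq C_{\rm DL}(p,t)\abs{\hat u}_{H^t(\hat K)},
\]
and since $s \leq t$, the embedding $H^t(\hat K) \hookrightarrow H^s(\hat K)$ yields the same bound with $H^s$ on the left-hand side. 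This step is classical and can be proved by contradiction and compactness, using that $\mathbb{P}_p$ contains the kernel of $\abs{\cdot}_{H^t(\hat K)}$.

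Next, I would set $p_K \isdef \hat p \circ F_K^{-1} \in \mathbb{P}_p$ and transfer the estimate back to $K$ term by term in the definition of $\norm{\cdot}_{H^s(K)}$. The change-of-variable formula combined with the chain rule for affine maps gives, for any integer $k \geq 0$ and any $v$ on $K$ with $\hat v = v \circ F_K$,
\[
\abs{v}_{H^k(K)} \lesssim h_K^{d/2-k}\abs{\hat v}_{H^k(\hat K)}, \qquad \abs{\hat v}_{H^k(\hat K)} \lesssim h_K^{k-d/2}\abs{v}_{H^k(K)},
\]
and an analogous fractional-order identity $\abs{v}_{H^{k+\mu}(K)} \lesssim h_K^{d/2 - k - \mu}\abs{\hat v}_{H^{k+\mu}(\hat K)}$ obtained by substituting $x = F_K(\hat x)$ in the Gagliardo double integral \eqref{eq:Gagliardo} and using $\norm{B_K}^{-1}\abs{x-y} \leq \abs{B_K^{-1}(x-y)} \leq \norm{B_K^{-1}}\abs{x-y}$. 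Applying the first group of scalings to $v = u - p_K$ and the second to $\hat u$, each derivative contribution $\norm{D^\alpha(u-p_K)}_{L^2(K)}^2$ with $\abs{\alpha} \leq \lfloor s \rfloor$ is bounded by $h_K^{2(t-\abs{\alpha})}\abs{u}_{H^t(K)}^2$, while the fractional contribution $\abs{D^\alpha(u-p_K)}_{H^\mu(K)}^2$ with $\abs{\alpha} = \lfloor s \rfloor$ is bounded by $h_K^{2(t-s)}\abs{u}_{H^t(K)}^2$.

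To consolidate these into a single $h_K^{2(t-s)}$ factor I would absorb the lower-order powers of $h_K$ using $h_K \lesssim \textup{diam}(\Omega) \lesssim 1$ (or equivalently, letting the constant depend on $\textup{diam}(\Omega)$), which yields $h_K^{2(t-\abs{\alpha})} \leq h_K^{2(t-s)} \cdot \textup{diam}(\Omega)^{2(s-\abs{\alpha})}$ for $\abs{\alpha} \leq s$. The main technical obstacle is therefore the scaling of the fractional Gagliardo seminorm under affine pull-back, which requires some care with the non-local double integral; everything else is either the classical Deny--Lions argument or bookkeeping of powers of $h_K$.
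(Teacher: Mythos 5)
Your plan is sound and amounts to a direct proof of a result that the paper simply cites: the paper's own ``proof'' points to \cite{bramble1970estimation} and \cite{dupont1980polynomial} and remarks that the fractional case ``is obtained by interpolation'' of Sobolev spaces, whereas you avoid interpolation theory entirely by scaling the Gagliardo double integral under the affine pull-back. That is a genuinely different (and equally standard) route: interpolation gives the fractional estimate as an abstract consequence of the integer-order ones, while your change-of-variables computation keeps everything explicit, at the price of the bookkeeping you rightly flag as the main technical chore. Both yield the same exponent $h_K^{t-s}$ and both ultimately rely on the Deny--Lions compactness argument on the reference element.

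One point deserves attention. When you consolidate the contributions $\|D^\alpha(u-p_K)\|^2_{L^2(K)} \lesssim h_K^{2(t-|\alpha|)}\abs{u}^2_{H^t(K)}$ for $\abs{\alpha}<s$ into a single $h_K^{2(t-s)}$ factor, you need $h_K$ to be bounded above; you propose to use $h_K\leq\operatorname{diam}(\Omega)$. That works in the setting where this lemma is actually applied (where $K$ is a mesh element of a fixed bounded polyhedron), and is in line with the paper's global $\lesssim$ convention, which allows $\Omega$-dependence. But as literally stated the lemma concerns an arbitrary $\gamma_0$-shape-regular simplex with a constant $C(\gamma_0)$, and with the full norm on the left-hand side such a uniform constant does not exist without an upper bound on $h_K$: the clean unconditional statement is the seminorm estimate $\abs{u-p_K}_{H^s(K)}\leq C\,h_K^{t-s}\abs{u}_{H^t(K)}$, from which the full-norm version follows whenever $h_K$ is bounded. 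Your fix is correct in spirit; just make explicit that the constant then also depends on an upper bound for $h_K$ (or state the seminorm version and pass to the full norm only where it is used). Everything else --- the shape-regularity bounds on $B_K$, the Deny--Lions step on $\hat K$, the affine scaling of $\abs{\cdot}_{H^\mu}$ via $\|B_K^{-1}\|^{-1}\abs{\hat x-\hat y}\leq\abs{B_K(\hat x-\hat y)}$ --- is correct.
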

\begin{proof}
 	This is well-known, see the seminal work \cite{bramble1970estimation} (the proof for fractional Sobolev exponents is obtained by interpolation), and \cite{dupont1980polynomial}. 
\end{proof}
\begin{lemma}[Scaled trace theorem]
	\label{lem:scaledTrace}
	For every $\gamma_0 > 0$, $t \in (\frac12, 1]$, there exists $C(\gamma_0,t) > 0$ such that, if $K \subset \R^d$ is a $\gamma_0$-shape-regular $d$-simplex ($d \geq 2$) and $F \in \mathcal{F}(K)$, then for all $u \in H^t(K)$
	\[\norm{u}_{L^2(F)} \leq C(\gamma_0,t)\left(\textup{diam}(K)^{t - \frac{1}{2}} \abs{u}_{H^t(K)} + \textup{diam}(K)^{-\frac12} \|u\|_{L^2(K)}\right)\]
\end{lemma}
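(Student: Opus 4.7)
The plan is to reduce the inequality to the classical trace theorem on a single reference simplex via an affine change of variables, and then track the scaling factors carefully using the $\gamma_0$-shape-regularity.

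First, I would fix a reference $d$-simplex $\hat K$ of unit diameter (for instance, the standard simplex with vertices $0, e_1, \ldots, e_d$) and a face $\hat F \in \mathcal{F}(\hat K)$. The usual trace theorem \cite[Thm.~3.37]{mclean2000strongly} gives, for $t > \frac12$, the existence of a constant $\hat C(t) > 0$ such that for all $\hat u \in H^t(\hat K)$,
\[
\norm{\hat u}_{L^2(\hat F)}^2 \leq \hat C(t)\bigl(\norm{\hat u}_{L^2(\hat K)}^2 + \abs{\hat u}_{H^t(\hat K)}^2\bigr).
\]
Since a face of $K$ can be mapped to any chosen face of $\hat K$ by relabelling the vertices, I may restrict attention to this fixed $\hat F$.

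Next, for any $\gamma_0$-shape-regular $d$-simplex $K$ with diameter $h_K$, I would write $K = \Phi_K(\hat K)$ for an affine map $\Phi_K(\hat x) = B_K \hat x + b_K$ matching $\hat F$ to $F$. A standard consequence of shape-regularity (see e.g.\ Ciarlet's monograph) is
\[
\norm{B_K} \lesssim h_K, \qquad \norm{B_K^{-1}} \lesssim h_K^{-1}, \qquad \abs{\det B_K} \simeq h_K^d,
\]
with implicit constants depending only on $\gamma_0$ and $d$. Setting $\hat u(\hat x) \isdef u(\Phi_K(\hat x))$ and using the change of variables formula, I get
\[
\norm{u}_{L^2(F)}^2 \simeq h_K^{d-1}\norm{\hat u}_{L^2(\hat F)}^2, \qquad \norm{u}_{L^2(K)}^2 \simeq h_K^{d}\norm{\hat u}_{L^2(\hat K)}^2.
\]
For the Gagliardo seminorm, the change of variables yields
\[
\abs{u}_{H^t(K)}^2 = \int_{\hat K}\int_{\hat K}\frac{|\hat u(\hat x) - \hat u(\hat y)|^2}{|B_K(\hat x-\hat y)|^{d+2t}}\abs{\det B_K}^2\,d\hat x\,d\hat y,
\]
and using the two-sided bound $|B_K(\hat x - \hat y)| \simeq h_K|\hat x - \hat y|$ coming from $\norm{B_K}$ and $\norm{B_K^{-1}}$, this is comparable to $h_K^{d-2t}\abs{\hat u}_{H^t(\hat K)}^2$, with constants depending only on $\gamma_0$, $d$ and $t$.

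Finally, I would substitute these three equivalences into the reference trace inequality applied to $\hat u$:
\[
h_K^{-(d-1)}\norm{u}_{L^2(F)}^2 \lesssim h_K^{-d}\norm{u}_{L^2(K)}^2 + h_K^{2t-d}\abs{u}_{H^t(K)}^2,
\]
which rearranges to $\norm{u}_{L^2(F)}^2 \lesssim h_K^{-1}\norm{u}_{L^2(K)}^2 + h_K^{2t-1}\abs{u}_{H^t(K)}^2$. Taking square roots and using $\sqrt{a+b} \leq \sqrt{a}+\sqrt{b}$ gives the claim with $\textup{diam}(K) = h_K$. The main obstacle is the Gagliardo scaling under a non-isotropic affine map, which is why the two-sided norm bounds on $B_K$ (and hence the shape-regularity hypothesis) are essential; once this is handled, everything else is bookkeeping.
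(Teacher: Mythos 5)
Your proposal is correct and follows essentially the same route the paper sketches: map to a reference simplex, apply the standard trace theorem there, and pull back using the scaling behaviour of $\|\cdot\|_{L^2(K)}$, $\|\cdot\|_{L^2(F)}$ and the Gagliardo seminorm under an affine change of variables controlled by $\gamma_0$-shape-regularity. You simply fill in the bookkeeping (the two-sided bounds on $B_K$, the $h_K^{d-2t}$ factor for $\abs{\cdot}_{H^t}$, and the finitely-many-faces reduction) that the paper leaves implicit.
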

\begin{proof}
	This is shown by mapping to the reference element, using the trace inequality \eqref{eq:traceIneq} and pulling back, using the scaling porperties of the (semi-) norms $\|\cdot\|_{L^2(K)},\abs{\cdot}_{H^t(K)}$ and $\|\cdot\|_{L^2(F)}$. 
\end{proof}

Fix a mesh element $K \in \Omega_h$ and let $u \in H^1(\Omega \setminus \Gamma) \cap H^t_{\rm pw}(\omega_K)$ (as defined in \Cref{sec:mainResult1}). For every $K' \in \omega_K$, let $p_{K'}$ be a polynomial such that $u-p_K'$ satisfies the Bramble-Hilbert estimate \eqref{eq:BHlem} on $K'$.
\begin{lemma}
	\label{lem:aux}
	For any $v_h \in V^p(\Omega_h;\Gamma)$ such that $(v_h)_{|K} = p_K$, there holds  
	\[\norm{u - \Pi_h u}_{H^s(K)} \lesssim h^{t - s}|u|^2_{H^t(K)} +  h^{\frac32 - s}\sum_{\nu = 1}^{N_\nu} \abs{\int_{F_\nu} \psi_{[F_\nu;\vec y_\nu]}(x) \gamma_{[K_\nu;F_\nu]}(u - v_h )(x)\,d\mathcal{H}^{d-1}(x)}\]
	where $N_\nu \lesssim 1$, and for all $1 \leq \nu \leq N_\nu$, $\vec y_\nu \in \mathcal{L}_p(K)$, $K_\nu \in \textup{st}(\vec y_\nu)$ and $F_\nu \in \mathcal{F}(K_\nu)$.
\end{lemma}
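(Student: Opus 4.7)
The plan is to exploit the polynomial reproduction property of $\Pi_h$ (property (i) of \Cref{thm:alg1}) to reduce the estimate on $u - \Pi_h u$ to one on $w := u - v_h$, which satisfies a local Bramble--Hilbert bound on $K$. First I would write
\[
u - \Pi_h u = w - \Pi_h w
\]
and apply the triangle inequality to obtain $\norm{u - \Pi_h u}_{H^s(K)} \leq \norm{w}_{H^s(K)} + \norm{\Pi_h w}_{H^s(K)}$. Since $w_{|K} = u_{|K} - p_K$, \Cref{lem:BH2} immediately yields $\norm{w}_{H^s(K)} \lesssim h_K^{t-s}|u|_{H^t(K)}$, which accounts for the first term of the claimed bound. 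It then remains to estimate $\norm{\Pi_h w}_{H^s(K)}$.

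The key observation is that $(\Pi_h w)_{|K}$ is a polynomial of degree $p$ depending only on the coefficients $N_i(w)$ and $N_{i,j}(w)$ attached to Lagrange nodes $\vec x_i \in K$. By \Cref{prop:solidangles} and the uniform bound $q_i \lesssim 1$ from~\eqref{eq:qmax}, the number of contributing basis functions is $\lesssim 1$, and a standard scaling argument to a reference simplex (for fractional $s$, one carefully tracks how the Gagliardo semi-norm~\eqref{eq:Gagliardo} transforms under affine maps) gives
\[
\norm{\phi_i}_{H^s(K)} + \norm{\psi_{i,j}}_{H^s(K)} \lesssim h_K^{d/2 - s}.
\]
I would then invoke \Cref{lem:singleBasis} and \Cref{lem:jumpBasis}: each $N_i(w)$ or $N_{i,j}(w)$ is an $O(1)$-term linear combination with $O(1)$ coefficients of integrals of two types, namely volume integrals $\int_{K'}\psi_{[K',\vec x_i]}\,w\,d\mathcal{H}^d$, which by construction only arise at interior Lagrange nodes and hence, among those contributing to $(\Pi_h w)_{|K}$, only with $K' = K$, and face integrals over a mesh face $F \in \mathcal{F}(\Omega_h)$ incident to $\vec x_i$.

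For the volume terms on $K$, Cauchy--Schwarz together with $\norm{\psi_{[K,\vec x_i]}}_{L^2(K)} \lesssim h_K^{-d/2}$ and the $s=0$ case of Bramble--Hilbert ($\norm{w}_{L^2(K)} \lesssim h_K^t |u|_{H^t(K)}$) yields a coefficient of size $\lesssim h_K^{t - d/2}|u|_{H^t(K)}$; multiplying by $\norm{\phi_i}_{H^s(K)} \lesssim h_K^{d/2 - s}$ gives a contribution $\lesssim h_K^{t-s}|u|_{H^t(K)}$, absorbed into the first term of the bound. The face integrals are kept intact: each one, weighted by its basis function norm $h_K^{d/2 - s}$, contributes a term of the form $h_K^{3/2-s}|I_\nu|$ to the final sum, matching the shape of the claimed inequality in the relevant physical dimension $d = 3$. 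The combinatorial bookkeeping showing $N_\nu \lesssim 1$ follows directly from the local finiteness of the stars ensured by \Cref{prop:solidangles} combined with the $O(1)$-term structure of $N_i$ and $N_{i,j}$ in \Cref{lem:singleBasis} and \Cref{lem:jumpBasis}. The main technical point I expect to require care is the fractional-order scaling estimate $\norm{\phi_i}_{H^s(K)} \lesssim h_K^{d/2 - s}$ for non-integer $s$, since the Gagliardo double integral does not factor as nicely as the local $L^2$-based semi-norms; this is standard but must be verified by mapping to the reference element and tracking the Jacobian in~\eqref{eq:Gagliardo}.
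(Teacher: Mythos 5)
Your proposal is correct and follows essentially the same route as the paper: the same decomposition $u-\Pi_h u = (u-v_h) - \Pi_h(u-v_h)$ via polynomial reproduction, the same Bramble--Hilbert bound on the first piece, the same scaling of the basis functions and the same Cauchy--Schwarz treatment of the volume-integral terms, leaving only the face integrals in the residual sum. The one thing you add beyond the paper's terse exposition is the explicit $d$-dependence of the exponents (the paper's $h^{3/2-s}$ corresponds to your $h^{d/2-s}$ with $d=3$) and the flag on the fractional scaling of $\|\phi_i\|_{H^s(K)}$, both of which are helpful clarifications rather than deviations.
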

\begin{proof}
	Let $\widetilde{N}_{i,j}$ be the linear forms such that 
	\[\Pi_h z = \sum_{(i,j) \in \mathcal{I}} \widetilde{N}_{i,j}(z) \phi_{i,j}\,.\]
	We can write
	\[\begin{split}
		\norm{u - \Pi_hu}_{H^s(K)} &\leq \norm{u - v_h}_{H^s(K)} + \norm{\Pi_h (u - v_h) }\,,\\
		&\lesssim h^{t - s} \abs{u}_{H^t} + \sum_{(i,j)} |\widetilde{N}_{i,j}(u - v_h)| \norm{\phi_{i}}_{H^s(K)}\\
		& \lesssim  h^{t - s} \abs{u}_{H^t} + h^{3/2-s} \sum_{(i,j)} |\widetilde{N}_{i,j}(u - v_h)|
	\end{split}\]
	where the sum runs over the pairs $(i,j) \in \mathcal{I}$ such that $K \in \textup{st}(\vec x_i;j)$. For all $(i,j) \in \mathcal{I}$ and $w \in H^1(\Omega \setminus \Gamma)$,   $\widetilde{N}_{i,j}(w)$ is a linear combination of $N_i(w)$ and $N_{i,j'}(w)$ for $1 \leq j' \leq q_i - 1$, which in turn are both defined via linear combinations of terms of the form
	\[\int_{F_\nu} \psi_{[F_\nu;\vec x_\nu]}(x) \gamma_{[K_\nu,F_\nu]} w(x)\,d\mathcal{H}^{d-1}(x) \] 
	with $\vec x_\nu$, $F_\nu$ and $K_\nu$ as in the statement of the Lemma, or terms of the form 
	\[\int_{K} \psi_{[K;\vec x_i]} w(x)\,dx\,.\]
	when $\vec x_i$ is an interior Lagrange node. In the latter case, we can write 
	\[\begin{split}
		\abs{\int_{K} \psi_{[K;\vec x_i]}} (u(x) - v_h(x))dx& = \abs{\int_{K} \psi_{[K;\vec x_i]}} (u(x) - p_K(x))dx\\
		&\leq \norm{\psi_{[K;\vec x_i]}}_{L^2(K)} \norm{u - p_K}_{L^2(K)} \\
		& \leq h^{-3/2} \abs{u}_{H^t}\,.
	\end{split} \]
	We conclude the proof using again that  $\textup{card}(\textup{st}(\vec x_i)) \lesssim 1$. 
\end{proof}
Define $v_h \in V^p(\Omega_h;\Gamma)$ by
\[v_h := \sum_{(i,j) }p_{K_{i,j}}(\vec x_i) \phi_{i,j}\]
where the sum ranges over the pairs $(i,j)$ such that $\vec x_i \in \mathcal{L}_p(K)$, and where the element $K_{i,j} \in \Omega_h$ is chosen freely in $\textup{st}(\vec x_i;j)$, with the only constraint that $K_{i,j} = K$ whenever possible, i.e. whenever $\textup{st}(\vec x_i;j) \ni K$. This constraint then ensures $(v_h)|_K = p_K$ by unisolvence, so that \Cref{lem:aux} applies to $v_h$. 

We now estimate a term of the form 
\[\int_{F_\nu} \psi_{[F_\nu;\vec y_\nu]}(x) \gamma_{[K_\nu,F_\nu]} (u(x) - v_h(x))\,d\mathcal{H}^{d-1}(x)\]
as defined in that Lemma. Let $(i_0,j_0)$ be the unique element of $\mathcal{I}$ determined by
\[\vec y_\nu = \vec x_{i_0} \quad \textup{and} \quad K_\nu \in \textup{st}(\vec x_{i_0};j_0)\]
Then for any $(i,j) \in \mathcal{I}$, one has
\[\begin{split}
	\int_{F_\nu} \psi_{[F_\nu;\vec y_\nu]}(x) \gamma_{[K_\nu,F_\nu]} \phi_{i,j}(x) d\mathcal{H}^{d-1}(x)&= \begin{cases}
	 \phi_{i}(\vec x_{i_0})& \textup{if } K_\nu \in \textup{st}(\vec x_{i};j),\\
	0 & \textup{otherwise,} 
\end{cases}\\[1em]
&= \delta_{i,i_0}\delta_{j,j_0}\,. 
\end{split}\]
Therefore, for any $u_h \in V^p(\Omega_h; \Gamma)$, 
\[\int_{F_\nu} \psi_{[F_\nu;\vec y_\nu]}(x) \gamma_{[K_\nu,F_\nu]} u_h(x)\,d\mathcal{H}^{d-1}(x) = u_h(\vec x_{i_0,j_0})\,.\]
Thus,
\[\begin{split}
	&\int_{F_\nu} \psi_{[F_\nu;\vec y_\nu]}(x) \gamma_{[K_\nu,F_\nu]}(u(x)-v_h(x))\,d\mathcal{H}^{d-1}(x) \\
	&\qquad \quad =  \int_{F_\nu} \psi_{[F_\nu;\vec y_\nu]}(x) \gamma_{[K_\nu,F_\nu]}(u(x)-p_{K_{i_0,j_0}}(x))\,d\mathcal{H}^{d-1}(x)\,.
\end{split}\]
This quantity can be estimated using Veeser's trick from \cite[Theorem 1]{veeser2016approximating} (see Eq.~(23)), stated as an independent result in \cite[Lemma 3.1]{camacho2015pointwise}. In our notation, this reads
\[\begin{split}
	&\int_{F_\nu} \psi_{[F_\nu;\vec y_\nu]}(x) \gamma_{[K_\nu,F_\nu]}(u(x)-p_{K_{i_0,j_0}}(x))\,d\mathcal{H}^{d-1}(x) \\
	&\quad\qquad =\int_{F_\nu} \psi_{[F_\nu;\vec y_\nu]}(x) \gamma_{[K_\nu,F_\nu]}(u(x)-p_{K_{\nu}}(x))\,d\mathcal{H}^{d-1}(x) + \,\, p_{K_\nu}(\vec x_{i_0}) - p_{K_{i_0,j_0}}(\vec x_{i_0}) \\
	&\quad\qquad = \int_{F_\nu} \psi_{[F_\nu;\vec y_\nu]}(x) \gamma_{[K_\nu,F_\nu]}(u(x)-p_{K_{\nu}}(x))\,d\mathcal{H}^{d-1}(x) + \sum_{l = 1}^{M-1}\,\, p_{K_l}(\vec x_{i_0}) - p_{K_{l+1}}(\vec x_{i_0})
\end{split}\]
using a telescoping sum, where $K_1,\ldots,K_M$  is a sequence of mesh elements in $\textup{st}(\vec x_{i_0};j_0)$ such that $K_\nu = K_1$, $K_{i,j} = K_M$, and $K_l$ and $K_{l+1}$ share a face $F_l$ not in $\Gamma$ containing $\vec x_i$ for every $l = 0\,,\ldots\,, M-1$. The first term is estimated via the scaled trace theorem (\Cref{lem:scaledTrace})
\[\begin{split}
	&\abs{\int_{F_\nu} \psi_{[F_\nu;\vec y_\nu]}(x) \gamma_{[K_\nu,F_\nu]}(u(x)-p_{K_{i,j}}(x))\,d\mathcal{H}^{d-1}(x)} \\
	&\qquad\qquad\qquad\leq \norm{\psi_{[F_\nu,\vec x_i]}}_{L^2(F_\nu)} \norm{\gamma_{[K_\nu,F_\nu]}(u-p_{K_{i,j}})}_{L^2(F_\nu)} \\
	&\qquad\qquad\qquad \lesssim  h^{-3/2} \norm{u - p_{K_{i,j}}}_{L^2(K_{i,j})} + h^{-3/2 + t} \abs{u - p_{K_{i,j}}}_{H^t}\\
	&\qquad\qquad\qquad\lesssim h^{-\frac32 +t}|u|_{H^t(K_{i,j})}\,.
\end{split}\]
Similarly, for all $l \in \{1,\ldots,M-1\}$, we write
\[\begin{split}
	p_{K_{l}}(\vec x_i) - p_{K_{l+1}}(\vec x_i) &= \int_{F_l} \psi_{[F_l,\vec x_i]} (p_{K_l}(x) - p_{K_{l+1}}(x))\,d\mathcal{H}^{d-1}(x) \\
	&= \int_{F_l} \psi_{[F_l,\vec x_i]}\gamma_{[K_{l+1},F_l]} (u-p_{K_{l+1}})(x) - \gamma_{[K_{l},F_l]}( u - p_{K_{l}})(x) \,d\mathcal{H}^{d-1}(x)
\end{split}\]
where we have used \Cref{lem:match}. Therefore, 
\[\abs{p_{K_l}(\vec x_i) - p_{K_{l+1}}(\vec x_i)} \lesssim  h^{-\frac32 + t}\abs{u}_{H^t(K_{i,j})}\]
In view of \Cref{lem:aux}, this establishes \Cref{thm:cont1}.

\bibliographystyle{amsplain}
\bibliography{biblioJSZ.bib}

\end{document}